\documentclass[10pt,reqno]{amsart}
\usepackage{latexsym}
\usepackage{graphicx}
\usepackage{amssymb}

\usepackage{amsmath}
\usepackage{amssymb}
\usepackage{tikz}
\usepackage{pgfplots}
\pgfplotsset{compat=1.18}
\usepackage{physics}
\usepackage{tikz-cd}
\usepackage[english]{babel}
\usepackage{amsthm}
\usepackage[hidelinks]{hyperref}
\usepackage{cleveref}
\usepackage{graphicx}
\usepackage{subcaption}
\usepackage{booktabs}
\usepackage{tabularx}
\usepackage{aliascnt}

\newcommand{\Lie}{\operatorname{Lie}}

\newcommand{\GL}{\operatorname{GL}}

\renewcommand{\rank}{\operatorname{rank}}
\newcommand{\mcA}{\mathcal{A}}
\newcommand{\mcG}{\mathcal{G}}
\DeclareMathOperator{\maxsing}{maxsing}

\DeclareMathOperator{\pt}{pt}
\DeclareMathOperator{\fl}{fl}
\DeclareMathOperator{\Br}{Br}

\newcommand{\bigcircledbullet}{%
  \mathbin{\tikz[baseline=-0.6ex]{
    \draw (0,0) circle (0.6ex);
    \fill (0,0) circle (0.25ex);
  }}%
}

\theoremstyle{plain}
\newtheorem{theorem}{Theorem}[section]

\newaliascnt{lemma}{theorem}
\newtheorem{lemma}[lemma]{Lemma}
\aliascntresetthe{lemma}

\newaliascnt{proposition}{theorem}
\newtheorem{proposition}[proposition]{Proposition}
\aliascntresetthe{proposition}

\newaliascnt{corollary}{theorem}
\newtheorem{corollary}[corollary]{Corollary}
\aliascntresetthe{corollary}

\newaliascnt{conjecture}{theorem}
\newtheorem{conjecture}[conjecture]{Conjecture}
\aliascntresetthe{conjecture}

\newaliascnt{claim}{theorem}

\aliascntresetthe{claim}

\theoremstyle{definition}

\newaliascnt{definition}{theorem}
\newtheorem{definition}[definition]{Definition}
\aliascntresetthe{definition}

\theoremstyle{remark}

\newaliascnt{remark}{theorem}
\newtheorem{remark}[remark]{Remark}
\aliascntresetthe{remark}

\makeatletter
\newcommand{\addresseshere}{%
  \enddoc@text\let\enddoc@text\relax
}
\makeatother

\begin{document}

\setcounter{page}{1}

\title[On Open Borel Orbits in Conormal Bundles to Schubert Cells]{On Open Borel Orbits in Conormal Bundles to Schubert Cells}
\author{Derek R. Aoki}
\address{Proof School\\
                221 Main St.\\
                San Francisco, California\\
                94103\\
                USA}
\email{derekaoki@acm.org}

\begin{abstract}
%% Text of abstract
    Let $C(v)$ denote a Schubert cell in the flag variety $\GL_n/B$, and let $X(w)$ be a Schubert variety containing $C(v)$. Mellit has conjectured that, if $X(w)$ is smooth, the existence of an open $B$-orbit in the conormal bundle to $C(v)$ in $X(w)$ is equivalent to $C(w_0w)$ being contained in the smooth locus of $X(w_0v)$. We prove a stronger version of this conjecture, requiring only that $X(w)$ is factorial. We prove our result by constructing a sequence of graphs associated to the pair $v,w$ such that the existence of a cycle in one of these graphs is equivalent to the non-existence of such an open orbit.
\end{abstract}

\maketitle

\noindent\par
\medskip

\indent\indent {\fontsize{8pt}{9pt} \selectfont MSC2020: 14M15, 05E14, 05C38 \par}
\indent\indent {\fontsize{8pt}{9pt} \selectfont Key Words and Phrases: Schubert varieties, conormal bundles, Bruhat order, singular loci \par}

%% Use \section commands to start a section

\section{Introduction}

Let $G = \GL_n$ and let $B \subset G$ be its Borel subgroup of upper triangular matrices. The flag variety $G/B$ is stratified by Schubert varieties $X(w)$ defined as the closures of Schubert cells $C(w)$, the orbits of $B$ on $G/B$ acting by left multiplication. The geometry of this stratification has proven fundamental to both representation theory and algebraic combinatorics. Kazhdan and Lusztig conjectured \cite{KazhdanLusztig1979,KazhdanLusztig1980} that certain polynomials, indexed by pairs $v,w\in S_n$ such that $v\preceq w$, control the combinatorics of the BGG category $\mathcal{O}$ for $\mathrm{Lie}(G)$. These conjectures were later proven independently by Brylinski--Kashiwara \cite{BrylinskiKashiwara1981}  and Beilinson--Bernstein \cite{BeilinsonBernstein1981}, who both tied the Kazhdan--Lusztig polynomials $P_{v,w}(q)$ to the geometry of Schubert varieties. For example, the (rational) smoothness of $C(v)$ in $X(w)$ is equivalent to the triviality of $P_{v,w}(q)$ \cite{Deodhar1985}. 

Motivated by these connections as well as the combinatorial properties of flag varieties, the last few decades have seen interest in understanding the smooth locus of the Schubert variety $X(w)$. The seminal result on this topic by Lakshmibai--Sandhya \cite{LakshmibaiSandhya1990} showed that $X(w)$ is smooth if and only if $w$ avoids 3412 and 4231. Following their result, four independently published papers gave a combinatorial characterization of the pairs $v,w\in S_n$ such that $C(v)$ is contained in the singular locus of $X(w)$ \cite{billey2001maximalsingularlocischubert,Cortez2003,KasselLascouxReutenauer2003,Manivel2001}. Lapid--M\'inguez showed (albeit in the equivalent language of $G$-orbits in $G/B\times G/B$) using combinatorial methods that, if $w$ avoids 231, then an open $B$-orbit in the conormal bundle to $C(v)\subseteq X(w)$ exists if and only if $C(w_0w)$ is contained in the smooth locus of $X(w_0v)$ \cite{Lapid_2018}. This open orbit condition originates with the work of Gei\ss-Leclerc-Schr\"oer \cite{GeissLeclercSchroer2005,GeissLeclercSchroer2006,GeissLeclercSchroer2011} and subsequent work on rigid modules over preprojective algebras and related dense orbit conditions \cite{AizenbudLapid2025,GeissSchroer2005,JensenSuYu2009,Lapid2026,LapidMinguez2020}. This result of Lapid--M\'inguez led Mellit to conjecture that this equivalence in fact holds when $X(w)$ is smooth, which is a strictly weaker condition than $w$ avoiding 231 \cite[Conjecture 6.2]{LapidMinguez2025}. In this paper, we study the following equivalent formulation of Mellit's conjecture.

\begin{conjecture}
    Let $v\preceq w$ such that $X(w)$ is smooth. Then $X(w_0v)$ is singular at $e_{w_0w}$ if and only if there is no open $B$-orbit in the conormal bundle to $C(v)\subseteq X(w)$.
\end{conjecture}

Our main contribution is the following theorem, which proves a stronger version of the conjecture above.

\begin{theorem}\label{thm:maintheorem}
    Let $v\preceq w$ such that $X(w)$ is factorial. Then $X(w_0v)$ is singular at $e_{w_0w}$ if and only if there is no open $B$-orbit in the conormal bundle to $C(v)\subseteq X(w)$.
\end{theorem}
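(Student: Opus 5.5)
Both sides of the equivalence will be translated into combinatorics of the pair $v\preceq w$, and then matched through a graph-theoretic criterion, as the abstract indicates.

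\emph{The open-orbit side.} The fibre of the conormal bundle to $C(v)\subseteq X(w)$ over $e_v$ is the annihilator of $T_{e_v}C(v)$ inside $T^*_{e_v}X(w)$. Since $X(w)$ is only factorial and possibly singular, I would first reduce to the open chart $X(w)\cap v\,U^-B/B$, where the conormal space can be written explicitly in terms of matrix entries indexed by inversions. An open $B$-orbit in the conormal bundle exists if and only if the stabilizer $B_v = B\cap vBv^{-1}$ has an open orbit on this fibre. That holds exactly when the generic stabilizer has dimension $\dim B_v - \dim(\text{fibre})$, i.e.\ when a certain linear map $\mathfrak b_v \to (\text{fibre})$, obtained by differentiating the action at a generic point, is surjective.

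\emph{The graph.} Writing this map in coordinates, the fibre has a basis indexed by a set of pairs $(i,j)$ determined by $v$ and $w$, and $\mathfrak b_v$ acts by elementary row and column operations. I would then construct the sequence of graphs $\Gamma_k(v,w)$ by peeling off coordinates that can be killed by torus and unipotent moves, one layer at a time. The vertices are the remaining coordinate positions, and the edges record when two positions are linked by the same weight relation. The key lemma is: a generic point has the expected stabilizer dimension if and only if no $\Gamma_k$ contains a cycle. Acyclicity lets one normalize the entries inductively along a forest, using the torus to scale leaves. A cycle produces a nontrivial multiplicative character, a cross-ratio-type invariant, which is a nonconstant rational $B_v$-invariant and so obstructs density.

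\emph{The singularity side and matching.} Using the known characterizations of the singular locus \cite{billey2001maximalsingularlocischubert,Cortez2003,KasselLascouxReutenauer2003,Manivel2001}, $X(w_0v)$ is singular at $e_{w_0w}$ if and only if there is $x$ with $w_0w\preceq x\preceq w_0v$ such that the interval carries a 3412 or 4231 configuration of the maximal-singular-locus type. Translating through $w_0$ gives configurations inside $[v,w]$. Factoriality of $X(w)$ means $w$ avoids 4231 and $45\bar{3}12$ (Bousquet-Mélou--Butler). I would show that, under this hypothesis, every cycle in some $\Gamma_k$ has a minimal instance whose vertices trace out exactly such a 3412 or 4231 pattern interval, and conversely that such a pattern yields a $4$-cycle.

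The main obstacle is the forward direction of this matching. A cycle in a deep graph $\Gamma_k$ can arise after many reductions, and one must show it comes from a genuine singular configuration rather than from a pattern forbidden in $w$. The failure of the statement for non-factorial $w$ shows this is exactly where the $45\bar312$-avoidance has to be used. I would first try induction on cycle length: show that a chord or shortcut always exists unless the cycle has length $4$, with the excluded patterns of $w$ supplying the chord.
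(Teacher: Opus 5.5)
\textbf{Gap 1: the factorial case with a cycle in $\overline{\mcG_1(v,w)}$.} Your reduction of the open-orbit condition to surjectivity of $\mathfrak b_v\to N^\vee_{X(w),e_v}C(v)$ at a generic point is the paper's matrix criterion. At the first level your invariant argument also works: the coordinates indexed by $E_1(v,w)$ are moved only by the torus, so a cycle in $\overline{\mcG_1(v,w)}$ gives a nonconstant invariant monomial. The genuine gap is the hard implication: if $X(w)$ is factorial and $\overline{\mcG_1(v,w)}$ has a cycle, then $X(w_0v)$ is singular at $e_{w_0w}$.

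Your mechanism for this cannot work. You propose induction on cycle length, with $45\bar312$-avoidance supplying a chord unless the cycle has length $4$, and conversely that singular patterns give $4$-cycles. Take $w=w_0$, which is smooth, and $v=[1,4,2,3,5]$. Then
\[
E_1(v,w)=\{(1,2),(1,3),(3,4),(4,5),(2,5)\},
\]
so $\overline{\mcG_1(v,w)}$ is a chordless $5$-cycle and contains no $4$-cycle at all. The singularity of $X([5,2,4,3,1])$ at $e$ is witnessed by the Billey--Warrington $4231$ case with $m=3$, $k=2$, whose flipped picture is exactly this $5$-cycle. In general the $4231$ and $3412$ cases give chordless cycles of length $m+k$. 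You also never say how a forbidden pattern in $w$ would produce a chord in a graph built from the dots of $v$.

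The paper does not locate patterns here; it counts.
\begin{itemize}
\item By Bousquet-M\'elou--Butler, factoriality means $\overline{\mcG_1(w_0w,w_0)}$ is a forest, so its subgraph $\overline{\mcG_1(w_0w,w_0v)}$ is a forest too.
\item By Tsukerman--Williams, the atom and coatom graphs have the same components. Hence a cycle in $\overline{\mcG_1(v,w)}$ forces $|E_1(v,w)|>|E_1(w_0w,w_0v)|$.
\item Dyer's bound then gives $[q]P_{w_0w,w_0v}>0$, which is singularity by Deodhar.
\end{itemize}
This atoms-versus-coatoms count is the idea your proposal is missing.

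\textbf{Gap 2: cycles in deeper graphs.} You claim that a cycle in a deeper graph produces a ``cross-ratio-type'' rational invariant, but give no argument. For $k\geq 2$, unipotent elements move the coordinates in $E_k(v,w)$ into lower levels. The block lower-triangular shape of $M^{v,w}(x)$ only gives $\rank M^{v,w}(x)\geq\sum_i\rank D_i$, so a rank drop in $D_k$ proves nothing by itself.

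The paper never constructs such an invariant. Instead:
\begin{itemize}
\item Using the unique-path lemma and Chartrand's theorem on line graphs of trees, it shows that a cycle in $\overline{\mcG_k(v,w)}$ with $k\geq 2$ forces the $45312$-type configuration, hence singularity of $X(w_0v)$ at $e_{w_0w}$.
\item It proves ``singular $\Rightarrow$ no open orbit'' by passing to some $w'\preceq w$ with $w_0w'\in\maxsing(w_0v)$, via the $B_v$-equivariant surjection $N^\vee_{X(w),e_v}C(v)\to N^\vee_{X(w'),e_v}C(v)$.
\item It handles the $45312$ case by flattening to $w=w_0$ and invoking the Lapid--M\'inguez theorem for $231$-avoiding $w$.
\end{itemize}
Your converse direction omits both the $45312$ case and this monotonicity in $w$.
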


To prove our main theorem, we establish a combinatorial condition equivalent to the existence of an open $B$-orbit in the conormal bundle. We construct a sequence of graphs $\mcG_i(v,w)$ that we associate to the pair $v,w$, which we build from the pairs $(a,b)$ such that $v\prec vs_{ab}\preceq w$. A construction equivalent to $\mcG_1(v,w)$ had previously been considered by Tsukerman--Williams in \cite{TsukermanWilliamsBruhatIntervalPolytopes}, called the atom graph. On the other hand, to the best of our knowledge, the higher graphs $\mcG_i(v,w)$ for $i>1$ have not been studied. We will show that the presence of a cycle in these graphs is equivalent to the failure of the conormal bundle to contain an open $B$-orbit. Our approach makes heavy use of the characterization of maximal singular points by Billey--Warrington \cite{billey2001maximalsingularlocischubert} and their corresponding Bruhat pictures.

In \Cref{sec:preliminaries}, we recall a linear algebraic characterization of the existence of an open $B$-orbit from \cite{Lapid_2018} and a pictorial characterization of the maximal singular points from \cite{billey2001maximalsingularlocischubert} in terms of Bruhat pictures. In \Cref{sec:maxsing}, we prove that the existence of an open $B$-orbit implies that $C(w_0w)$ is contained in the smooth locus of $X(w_0v)$, without any assumptions on $w$. In \Cref{sec:necessarycondition}, we define the graphs $\mcG_i(v,w)$ and use them to give a necessary condition for the non-existence of an open $B$-orbit. 

In \Cref{sec:maintheorem}, we prove our main theorem. We first prove the following theorem, which is of independent interest because it gives a purely combinatorial characterization of the existence of an open $B$-orbit.

\begin{theorem}
    There exists an open $B$-orbit in the conormal bundle to $C(v)\subseteq X(w)$ if and only if each of the graphs $\mcG_i(v,w)$ is a forest.
\end{theorem}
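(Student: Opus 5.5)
The plan is to translate the existence of an open $B$-orbit into a rank condition and then read that rank condition off the graphs $\mcG_i(v,w)$. First I would recall from \Cref{sec:preliminaries} the criterion of Lapid--M\'inguez \cite{Lapid_2018}: the conormal fiber at $e_v$ is identified with a space of matrices $N$ supported on certain positions determined by $v$ and $w$. These positions are essentially the pairs $(a,b)$ with $v\prec vs_{ab}\preceq w$, together with the positions forced by the Bruhat condition. The stabilizer $B\cap vBv^{-1}$ acts on this fiber, and an open orbit exists iff, for generic $N$, the tangent map $\mathfrak{b}_v\to \text{fiber}$, $X\mapsto [X,N]$, is surjective. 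Equivalently, the stabilizer of a generic $N$ has the minimal dimension $\dim \mathfrak{b}_v-\dim(\text{fiber})$. Dually, surjectivity fails iff some nonzero linear functional on the fiber annihilates all $[X,N]$. I would encode this as a linear system whose variables are the entries $N_{ab}$ and whose equations are indexed by root directions in $\mathfrak{b}_v$.

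The second step is to match this linear system with the graphs. Each equation couples entries $N_{ab}$ and $N_{a'b'}$ sharing an index. In $\mcG_1(v,w)$, the atom graph, the vertices are the indices $1,\dots,n$ and the edges are the pairs $(a,b)$ with $v\prec vs_{ab}\preceq w$. The higher graphs $\mcG_i$ should record the entries introduced at the $i$-th stage of an elimination procedure: after using the equations to normalize $N$ by torus and unipotent elements, new effective positions appear, built from pairs $(a,b)$ with $\ell(vs_{ab})-\ell(v)$ growing with $i$. This is a filtration of the fiber by length or "distance" from $v$. I would prove that the system is triangular with respect to this filtration. Then the generic orbit is open iff at each level $i$ the torus-type part of the action (scaling by characters $t_a t_b^{-1}$) acts with trivial generic stabilizer defect on the level-$i$ entries, given the normalization of the earlier levels.

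The third step is the core linear algebra. Characters $t_at_b^{-1}$ indexed by the edges of a graph are linearly independent iff the graph is a forest, since a cycle gives the alternating-sum relation. So at each level the torus-like action on the new entries is free exactly when $\mcG_i$ is acyclic. The forward direction is then immediate: if every $\mcG_i$ is a forest, the levels can be normalized successively, and the dimension count gives an open orbit. For the converse, a cycle in some $\mcG_i$ yields an invariant, namely the cyclic product of the corresponding entries after normalization, which is a nonconstant rational invariant. Hence there is no dense orbit.

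The main obstacle is the triangularity claim. I need to show that the unipotent part of $B\cap vBv^{-1}$ can always clear the off-level contributions, so that the cyclic product is genuinely invariant rather than only invariant modulo lower levels. Equivalently, a cycle at level $i$ must not be broken by a unipotent move coming from a different level. I would attack this with the Bruhat pictures of Billey--Warrington \cite{billey2001maximalsingularlocischubert}: they show that the rectangles in the permutation diagram which contain a forbidden point obstruct exactly the unipotent moves that would break such a cycle.
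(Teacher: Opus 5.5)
Your sufficiency direction is essentially the paper's \Cref{prop:sufficientconditionforopen}. With rows and columns ordered by the levels $E_i(v,w)$, the matrix $M^{v,w}(x)$ is block lower triangular with diagonal blocks $D_i$ (rows $E_i(v,w)$, columns $E_{i-1}(v,w)$). At $x=\vec 1$ each $D_i$ is the signed incidence matrix of $\mcG_i(v,w)$, so forests give full rank.

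Two corrections are needed. First, the edges of $\mcG_1(v,w)$ are only the $\trianglelefteq$-minimal pairs $E_1(v,w)$, i.e.\ the covers $v\lessdot vs_{ab}\preceq w$, not all of $R(v,w)$. With your definition the criterion already fails for $v=[1,2,5,3,4]$, $w=[2,4,5,3,1]$: there $(2,4),(4,5),(2,5)$ would form a triangle, yet $M^{v,w}(\vec 1)$ has full rank. Second, for $i\geq 2$ the columns of $D_i$ are root vectors $E_{v(r)v(s)}$ with $(r,s)\in E_{i-1}(v,w)$, not torus characters $t_at_b^{-1}$. Independence of characters only describes level $1$; at higher levels the work is done by the incidence-matrix rank count of \Cref{cor:allcomponentstree}.

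The genuine gap is the converse when $\overline{\mcG_1(v,w)}$ is a forest but some $\overline{\mcG_i(v,w)}$ with $i\geq 2$ has a cycle. For $i=1$ your invariant is fine, since the $E_1$-coordinates are moved only by the torus; this is \Cref{cor:nocycle}. For $i\geq 2$, triangularity gives only $\rank M^{v,w}(x)\geq \sum_i \rank D_i$, so a defect in $D_i$ could a priori be absorbed by the off-diagonal blocks. Moreover, the level-$i$ coordinates are \emph{translated} by unipotent directions, with coefficients that are level-$1$ coordinates, so the alternating cyclic product of level-$i$ entries is not invariant. For instance, take $v=[2,1,3,5,4]$ and $w=w_0$, where $\mcG_2(v,w)$ is a $4$-cycle. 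Writing $x_{ab}$ for $x_{v(a)v(b)}$, the semi-invariant witnessing the absence of an open orbit is
\[
\det\begin{pmatrix} x_{14} & x_{15} & x_{13}\\ x_{24} & x_{25} & x_{23}\\ x_{34} & x_{35} & 0\end{pmatrix},
\]
which becomes $B_v$-invariant after dividing by $x_{13}x_{23}x_{34}x_{35}$. It is an alternating sum over the cycle with level-$1$ weights, not a product of level-$2$ entries. You flag exactly this non-interference as the main obstacle and leave it at an appeal to Bruhat pictures, so this direction is unproved.

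The paper never controls these moves directly. \Cref{lem:centralvertex} (the line graph of a forest is a block graph, so the cycle is centred at a single $(r,s)\in E_{i-2}(v,w)$) together with \Cref{lem:uniquepath} forces the configuration of \Cref{fig:flippedhighergjcycle}. By \Cref{lem:fliplemma} and \Cref{cor:singularcharacterization}, this means $X(w_0v)$ is singular at $e_{w_0w}$. Then \Cref{thm:singularimpliesnoopenorbit} passes to some $w'\preceq w$ with $w_0w'\in\maxsing(w_0v)$ via the open $B_v$-equivariant projection $N^\vee_{X(w),e_v}C(v)\twoheadrightarrow N^\vee_{X(w'),e_v}C(v)$; it is this quotient that discards the interfering coordinates. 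The resulting 45312 case is settled by flattening to $w_0$ and invoking the theorem of Lapid--M\'inguez. Your argument needs some such reduction, or a genuinely new construction of an invariant.
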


In fact, an algorithm utilizing the above theorem for determining the existence of an open $B$-orbit runs in $O(n^2)$, whereas the naive algorithm using the linear algebraic characterization of \cite{Lapid_2018} runs in $O(n^6)$. Further, we have that a cycle in $\mcG_i(v,w)$ for $i>1$ implies $X(w_0v)$ is singular at $e_{w_0w}$. We conclude by proving that if $X(w)$ is factorial and $\mcG_1(v,w)$ contains a cycle then $X(w_0v)$ is singular at $e_{w_0w}$, using results of Deodhar \cite{Deodhar1985} and Dyer \cite{Dyer1997} on the nontriviality of Kazhdan-Lusztig polynomials and results of Bousquet--M\'elou--Butler \cite{BousquetMelouButler2007} and Tsukerman--Williams \cite{TsukermanWilliamsBruhatIntervalPolytopes} on the graphs $\mcG_1(v,w)$.

\begin{remark}
    During the preparation of this paper, the preprint \cite{Lapid2026} by Lapid appeared. In its treatment of type $A$, it makes use of the same linear algebraic condition from \cite{Lapid_2018} we do, and defines a graph which is the same as $\mcG_1(v,w)$ up to choice of conventions. This paper also characterizes the existence of an open $B$-orbit in terms of the termination of a greedy algorithm. Our approach gives a more explicit combinatorial description in terms of the graphs $\mcG_i(v,w)$. Moreover, our paper handles the case of arbitrary pairs $v\preceq w$, whereas \cite{Lapid2026} only handles the case $w=w_0$. Because of this, we are able to prove Mellit's conjecture, whereas the goals of \cite{Lapid2026} are different.
\end{remark}

\section{Preliminaries}\label{sec:preliminaries}

\subsection{Basic Definitions}

Throughout this paper, we work over $\mathbb C$. Again, let $G=\GL_n$, and let $B$ be the Borel subgroup of upper triangular elements of $G$. We write $T \subset B$ for the maximal torus of diagonal matrices. The flag variety $G/B$ is a disjoint union of $B$-orbits $C(w)=BwB/B$ in $G/B$ called \textit{Schubert cells}, which are indexed by elements $w$ of the Weyl group $W=N(T)/T$. We have an isomorphism $W\cong S_n$, where $S_n$ is the symmetric group on $n$ elements. We write $s_{ij}$ for the element of $W\cong S_n$ transposing $i$ and $j$. We call the Zariski closure $X(w)=\overline{C(w)}$ a \textit{Schubert variety}. Since each $X(w)$ is stable under the action of $B$, it is a union of Schubert cells. We define a partial order, called the \textit{Bruhat order}, on $W\cong S_n$ such that $v\preceq w$ if and only if $C(v)\subseteq X(w)$. There exists a unique element denoted $w_0\in W$, defined by $i\mapsto n+1-i$, which is maximal with respect to the Bruhat order. In the sequel, we refer to permutations by their one line notations. That is, we write $w=[a_1,\dots,a_n]$ for the permutation $w$ such that $w(i)=a_i$. For instance, we have $w_0=[n,n-1,\dots,1]$.

For $v \in S_n$, we write $e_v$ for the point $vB/B\in G/B$. Writing $B^v = vBv^{-1}$, we define $B_v = B \cap B^v$ for the stabilizer of $e_v \in G/B$ under the $B$-action. We also write $\mathfrak b^v=\Lie(B^v)$ and $\mathfrak b_v=\Lie(B_v)$. We identify the tangent space $T_{e_v}(G/B)$ to $G/B$ at $e_v$ with the quotient $\mathfrak g/\mathfrak b^v$. Writing $B^-$ for the Borel subgroup opposite to $B$, and writing $\mathfrak n_v^-$ for the nilpotent radical of $\Lie((vB^-v^{-1})$, we identify $\mathfrak g/\mathfrak b^v$ with $\mathfrak n_v^-$ as vector spaces. The cotangent space $T^\vee_{e_v}(G/B)$ is the linear dual to $T_{e_v}(G/B)$, and $B_v$ acts naturally on $T_{e_v}(G/B)$. The nondegenerate bilinear form $(X,Y)\mapsto \operatorname{tr}(XY)$ then yields a $B_v$-equivariant isomorphism $(\mathfrak g/\mathfrak b^v)^*\cong \mathfrak n_v^+$, the nilpotent radical of $\mathfrak b^v$, where $B_v$ acts on $\mathfrak n_v^+$ via the adjoint action. Explicitly, we fix this identification so that $T_{e_v}^\vee (G/B)$ is spanned by the elementary matrices $E_{v(i)v(j)}$ for $i<j$. 

Let $w\in S_n$ such that $w\succeq v$. The following theorem of Lakshmibai--Seshadri gives an explicit description of the tangent spaces to the Schubert varieties as a subspace of $T_{e_v}(G/B)$ {\cite[Theorem 1]{LakshmibaiSeshadri1984}}.

\begin{theorem}
    The tangent space $T_{e_v}X(w)\subseteq T_{e_v}(G/B)$ has a basis $\{E_{v(j)v(i)}:i< j,vs_{ij}\preceq w\}$, where $s_{ij}$ denotes the permutation transposing $i$ and $j$.  
\end{theorem}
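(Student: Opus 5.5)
We would prove the Lakshmibai--Seshadri description by working in the standard affine chart around $e_v$ and using the $T$-action. The chart is $U^-_v e_v$, where $U^-_v = v U^- v^{-1}$. Its tangent space at $e_v$ is identified with $\mathfrak n_v^-$, which has basis the root vectors $E_{v(j)v(i)}$ for $i<j$. Since $X(w)$ is $B$-stable, it is in particular $T$-stable. The torus $T$ fixes $e_v$ and acts on $T_{e_v}(G/B)$ with pairwise distinct weights, one for each root vector. Hence $T_{e_v}X(w)$, being a $T$-stable subspace, is spanned by the root vectors it contains. It therefore suffices to decide, for each pair $i<j$, whether $E_{v(j)v(i)}\in T_{e_v}X(w)$.

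For the inclusion $\supseteq$ we use $T$-stable curves. For each $i<j$, let $U_{\alpha}$ be the root subgroup with Lie algebra $\mathbb C E_{v(j)v(i)}$. The orbit closure $\overline{U_\alpha e_v}$ is a $T$-stable $\mathbb P^1$ passing through $e_v$ and $e_{vs_{ij}}$, and its tangent line at $e_v$ is $\mathbb C E_{v(j)v(i)}$. If $vs_{ij}\preceq w$, we argue as follows.
\begin{itemize}
\item If $vs_{ij}\succ v$, then this curve lies in $X(vs_{ij})$. Indeed, it is the closure of a one-dimensional $B$-translate piece of $C(vs_{ij})$ through $e_v$, equivalently $U_\alpha\subseteq B^{vs_{ij}}$-translates.
\item If $vs_{ij}\prec v$, then $U_\alpha\subseteq B$, so the curve lies in $\overline{Be_v}=X(v)$.
\end{itemize}
In either case the curve lies in $X(w)$, since $v, vs_{ij}\preceq w$. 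This yields $E_{v(j)v(i)}\in T_{e_v}X(w)$.

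The converse inclusion is the main obstacle: we must show that no other root vectors are tangent. For this we would use equations. The Schubert variety $X(w)$ is cut out in $G/B$ by the rank conditions on the flag, namely $\rank$ conditions on the upper-left submatrices of the matrix of coordinates, which in Plücker terms are the vanishing of the Plücker coordinates $p_\tau$ with $\tau\not\preceq w$ (standard monomial theory). Restrict these to the chart and take linear parts at $e_v$. For a root vector $E_{v(j)v(i)}$ with $vs_{ij}\not\preceq w$, we exhibit a Plücker coordinate $p_\tau$ that vanishes on $X(w)$ and whose differential at $e_v$ is nonzero on $E_{v(j)v(i)}$. The natural candidate is the coordinate corresponding to the subset $vs_{ij}\{1,\dots,k\}$ for a suitable $k$ with $i\le k<j$. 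Since $vs_{ij}\not\preceq w$, the tableau criterion gives some $k$ with $vs_{ij}\{1,\dots,k\}\not\le w\{1,\dots,k\}$, so $p_\tau$ vanishes on $X(w)$. In the chart, $p_\tau$ equals, up to sign, the single coordinate $x_{v(j)v(i)}$ plus higher-order terms, which we would verify by expanding the minor. Its differential therefore detects $E_{v(j)v(i)}$. Combining the two inclusions with the weight-space decomposition gives the stated basis.
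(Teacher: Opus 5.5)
Your proof is correct. It takes a different route from the paper, which gives no proof here and simply cites Lakshmibai--Seshadri. The argument behind that citation uses standard monomial theory to know that the Pl\"ucker coordinates $p_\tau$ with $\tau\not\preceq w$ generate the ideal of $X(w)$ in the chart around $e_v$. It then reads off $T_{e_v}X(w)$ as the common kernel of their linear parts.

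Your organization buys something. Because you get the lower bound geometrically from the $T$-stable curves $\overline{U_\alpha e_v}$, the upper bound only needs each relevant $p_\tau$ to \emph{vanish} on $X(w)$, not that these functions generate the ideal. That vanishing is elementary. For $g=bw$ with $b\in B$, the minor of $g$ on rows $I$ and columns $1,\dots,k$ is $\pm\det b_{I,w\{1,\dots,k\}}$. This is zero unless $I\le w\{1,\dots,k\}$ componentwise, and one then passes to the closure. So your appeal to standard monomial theory can be dropped.

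The cited route gives genuine local equations and the analogues for the other classical groups. Yours gives an elementary argument that also shows $T_{e_v}X(w)$ is spanned by tangent lines to $T$-stable curves in $X(w)$ through $e_v$. In addition, each missing root direction is detected by a single Pl\"ucker coordinate.

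Three points need tightening.

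First, in the case $vs_{ij}\succ v$ your justification is muddled. The inclusion $U_\alpha\subseteq B^{vs_{ij}}$ only says that $U_\alpha$ fixes $e_{vs_{ij}}$. The actual reason is that $v(i)<v(j)$ makes $U_{-\alpha}=\{I+tE_{v(i)v(j)}\}$ a subgroup of $B$. The same $\mathbb P^1$ equals $\overline{U_{-\alpha}e_{vs_{ij}}}$, which lies in $X(vs_{ij})$. Equivalently, the usual $\mathrm{SL}_2$ factorization of $I+tE_{v(j)v(i)}$ shows that $(I+tE_{v(j)v(i)})e_v\in C(vs_{ij})$ for $t\neq 0$.

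Second, you should say why the $k$ supplied by the tableau criterion satisfies $i\le k<j$. For $k<i$ or $k\ge j$ one has $vs_{ij}\{1,\dots,k\}=v\{1,\dots,k\}\le w\{1,\dots,k\}$, because $v\preceq w$.

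Third, $p_\tau$ is a section of a line bundle rather than a function. On the chart, use the representatives $uv$ with $u\in vU^-v^{-1}$. Then $p_{v\{1,\dots,k\}}(uv)=\pm1$, so $p_\tau(uv)$ is an honest regular function vanishing on $X(w)$ near $e_v$. In its Leibniz expansion, the row indexed by $v(j)$ is the only one with no constant term. Hence the degree-one part must match every other row to its own diagonal column, leaving $\pm x_{v(j)v(i)}$, exactly as you assert.
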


Dualizing, this identification yields a $B_v$-equivariant surjection $T_{e_v}^\vee (G/B)\twoheadrightarrow T_{e_v}^\vee X(w)$, with kernel $K(v,w)$ spanned by $\{E_{v(i)v(j)}:i< j,vs_{ij}\not\preceq  w\}$. Thus, $T_{e_v}^\vee X(w)$ has basis
\begin{align*}
    \{e_{v(i)v(j)}:i< j,vs_{ij}\preceq w\},
\end{align*}
where $E_{v(i)v(j)}$ is the class of $E_{v(i)v(j)}$ modulo $K(v,w)$.

We define the \textit{conormal space} $N^\vee _{X(w),gB}C(v)$ to $C(v)\subseteq X(w)$ at $gB\in C(v)$ as the subspace of $T_{gB}^\vee X(w)$ which annihilates $T_{gB}C(v)\subseteq T_{gB}X(w)$. For $gB=e_v$, $T_{gb}C(v)$ is a $B_v$-submodule of $T_{gB}X(w)$. We write $N^\vee _{X(w)}C(v)$ for the conormal bundle to $C(v)$ in $X(w)$, whose fiber at $gB$ is $N^\vee _{X(w),gB}C(v)$. By definition, there is a short exact sequence of $B_v$-modules
\begin{align*}
    0\to N^\vee_{X(w),e_v}C(v)\to T_{e_v}^\vee X(w)\to T_{e_v}^\vee C(v)\to 0.
\end{align*}
We write
\begin{align*}
    R(v,w)=\{(i,j)\in [n]^2:i< j, \quad v\prec vs_{ij}\preceq w\}.
\end{align*}
Then $N^\vee_{X(w),e_v}C(v)$ is the $B_v$-submodule of $T_{e_v}^\vee X(w)$ with basis $\{e_{v(i)v(j)}:(i,j)\in R(v,w)\}$. 

\begin{definition}
    For $x\in N^\vee_{X(w),e_v}C(v)$, we write $x_{v(i)v(j)}\in \mathbb C$ such that
    \begin{align*}
        x=\sum_{(i,j)\in R(v,w)}x_{v(i)v(j)}e_{v(i)v(j)}.
    \end{align*}
    We use the convention that $x_{v(i)v(j)}=0$ if $(i,j)$ is not in $R(v,w)$. We write $\vec 1$ for the vector $x\in N^\vee_{X(w),e_v}C(v)$ such that every $x_{v(i)v(j)}$ is $1$.
\end{definition}

We also write $S(v)=\{(i,j)\in[n]^2:i\leq j,v(i)\leq v(j)\}$ for the pairs $(i,j)$ which index the basis of $\mathfrak b_v\subseteq \mathfrak b$. Explicitly, $\mathfrak b_v$ is spanned by $\{E_{v(i)v(j)}:(i,j)\in S(v)\}$. When $i< j$, note that $(i,j)\in S(v)$ if and only if $v\preceq vs_{ij}$ \cite[Theorem 2.1.5]{BjornerBrenti}.

With $S(v)$ and $R(v,w)$ established, we introduce a matrix whose rank detects open orbits of $B_v$ in the $N^\vee_{X(w),e_v}C(v)$. 

\begin{definition}
    Let $x\in N^\vee _{X(w),e_v}C(v)$. We define $M^{v,w}(x)$ as a matrix with rows indexed by $R(v,w)$ and columns indexed by $S(v)$. Its entries are given by
    \begin{align*}
        M^{v,w}(x)_{(a,b),(r,s)}&=\begin{cases}
        x_{v(s)v(b)} & \text{if } a=r \text{ and } (s,b)\in R(v,w),\\
        -x_{v(a)v(r)} & \text{if } s=b \text{ and } (a,r)\in R(v,w),\\
        0 & \text{otherwise.}
        \end{cases}
    \end{align*}
\end{definition}

The following gives a purely linear algebraic characterization of the existence of an open $B$-orbit in $N^\vee_{X(w)}C(v)$, and was established in different language in \cite{Lapid_2018} (cf. Remark 4.11 and the surrounding material).
\begin{theorem}\label{thm:matrixcharacterization}
    The following are equivalent:
    \begin{enumerate}
        \item The action of $B$ on the conormal bundle $N^\vee _{X(w)}C(v)$ admits an open orbit. 
        \item There exists some $x\in N^\vee _{X(w),e_v}C(v)$ such that $\rank(M^{v,w}(x))=|R(v,w)|$.
    \end{enumerate}
\end{theorem}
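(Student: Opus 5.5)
The plan is the standard reduction from an open $B$-orbit on a homogeneous vector bundle to an open $B_v$-orbit on a single fiber, followed by a computation of the orbit map's differential.

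\textbf{Step 1: reduce to one fiber.} Since $C(v)=B\cdot e_v\cong B/B_v$, the conormal bundle is the associated bundle
\[
N^\vee_{X(w)}C(v)\cong B\times^{B_v} N^\vee_{X(w),e_v}C(v),
\]
where the identification uses the $B$-equivariance of the conormal bundle. Under this identification, $B$-orbits in the total space correspond bijectively to $B_v$-orbits in the fiber $N:=N^\vee_{X(w),e_v}C(v)$. The correspondence sends $B_v\cdot x$ to $B\cdot x$, and $\dim B\cdot x=\dim C(v)+\dim B_v\cdot x$. Hence an open $B$-orbit exists if and only if some $B_v$-orbit in $N$ is open, that is, has dimension $|R(v,w)|$.

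\textbf{Step 2: pass to the Lie algebra.} Over $\mathbb C$, $B_v$ is connected, since it is the semidirect product of $T$ and a unipotent group. So the orbit $B_v\cdot x$ is open if and only if the differential of the orbit map at the identity, $\mathfrak b_v\to N$, $X\mapsto X\cdot x$, is surjective. It therefore suffices to show that this differential, written in the basis $\{E_{v(r)v(s)}:(r,s)\in S(v)\}$ of $\mathfrak b_v$ and the basis $\{e_{v(a)v(b)}:(a,b)\in R(v,w)\}$ of $N$, is the matrix $M^{v,w}(x)$, possibly up to transpose or sign. Its rank equals $|R(v,w)|$ exactly when the map is surjective.

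\textbf{Step 3: compute the action.} This is the main obstacle: it is pure bookkeeping, but the conventions must be handled carefully. Lift $x$ to $\tilde x=\sum x_{v(a)v(b)}E_{v(a)v(b)}\in\mathfrak n_v^+$. Then
\[
[E_{v(r)v(s)},E_{v(a)v(b)}]=\delta_{s a}E_{v(r)v(b)}-\delta_{br}E_{v(a)v(s)}.
\]
Reduce this modulo $K(v,w)$ and modulo the annihilator of $N$, i.e.\ project onto the coordinates indexed by $R(v,w)$. A quotient of $B_v$-modules is involved here, so the projection is equivariant and the computation is legitimate. Read off the coefficient of $e_{v(a)v(b)}$ in $E_{v(r)v(s)}\cdot x$:
\begin{itemize}
\item it contains $x_{v(s)v(b)}$ when $a=r$ and $(s,b)\in R(v,w)$;
\item it contains $-x_{v(a)v(r)}$ when $b=s$ and $(a,r)\in R(v,w)$.
\end{itemize}
These are precisely the entries of $M^{v,w}(x)$. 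The convention $x_{\cdot\cdot}=0$ outside $R(v,w)$ accounts for the vanishing terms. The diagonal case $r=s$ (the torus) is included consistently, and both terms can contribute there.

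One point needs checking. Terms landing in coordinates outside $R(v,w)$, namely in $K(v,w)$ or in the tangent part of $C(v)$, must be discarded consistently. This holds because $N$ is a $B_v$-submodule of $T^\vee_{e_v}X(w)$: the image of $\tilde x$ stays in $N$ modulo $K(v,w)$, so the only surviving coordinates are those indexed by $R(v,w)$. With that verified, Steps 1 and 2 give the equivalence of (1) and (2).
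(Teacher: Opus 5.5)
Your proposal is correct and follows the paper's own argument: reduce to the $B_v$-action on the fiber at $e_v$, use that $B_v\cdot x$ is open if and only if $[\mathfrak b_v,x]=N^\vee_{X(w),e_v}C(v)$, and read off $M^{v,w}(x)$ entrywise from $[E_{v(r)v(s)},E_{v(a)v(b)}]=\delta_{sa}E_{v(r)v(b)}-\delta_{br}E_{v(a)v(s)}$; your Steps 1--2 simply spell out the homogeneous-bundle and differential arguments that the paper leaves implicit. One small caveat: projecting onto the $R(v,w)$-coordinates is not itself $B_v$-equivariant on all of $T^\vee_{e_v}X(w)$ (only reduction modulo $K(v,w)$ is). You do not need it, though, since, as you note, $[\mathfrak b_v,x]$ already lies in the submodule $N^\vee_{X(w),e_v}C(v)$, on which that projection is the identity.
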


\begin{proof} 
    Because $B$ acts transitively on $C(v)$, condition (1) is equivalent to the condition that $B_v$ has an open orbit on $N^\vee _{X(w),e_v}C(v)$. This is equivalent to the condition that
    \begin{align*}
        [\mathfrak b_v,x]=N^\vee _{X(w),e_v}C(v).
    \end{align*}
    
    By direct computation, we have
    \begin{align*}
        [E_{v(r)v(s)},x]&=\sum_{(a,b)\in R(v,w)}x_{v(a)v(b)}[E_{v(r)v(s)},e_{v(a)v(b)}]\\
        &=\sum_{(a,b)\in R(v,w)}x_{v(a)v(b)}(\delta_{as}e_{v(r)v(b)}-\delta_{br} e_{v(a)v(s)}).
    \end{align*}
    Therefore,
    \begin{align*}
        [E_{v(r)v(s)},x]_{v(a)v(b)}&=\delta_{ar}x_{v(s)v(b)}-\delta_{sb}x_{v(a)v(r)}\\
        &=M^{v,w}(x)_{(a,b),(r,s)}.
    \end{align*}
    In particular, our matrix $M^{v,w}(x)$ has full rank $|R(v,w)|$ if and only if $[\mathfrak b_v,x]$ is the full conormal space $N^\vee_{X(w),e_v}C(v)$. 
\end{proof}

\subsection{Bruhat Pictures}

Before continuing, we introduce Bruhat pictures and related definitions from \cite{billey2001maximalsingularlocischubert}, which allow us to visualize the set $R(v,w)$ and give a pictorial characterization of maximal singular points. We will then recall several results on Bruhat pictures and from \cite{billey2001maximalsingularlocischubert}.

\begin{definition}
    Given $v\preceq w$ in $S_n$, the Bruhat picture for the pair $(v,w)$ is the $n$ by $n$ grid with nodes marked by filled and unfilled dots. For $i\in [n]$, writing $\pt_v(i)$ for the point $(i,v(i))\in \mathbb Z^2$. We place a filled dot $\bullet$ at $\pt_v(i)$, an unfilled dot $\circ$ at $\pt_w(i)$, and a filled dot overlaid on an unfilled one $\bigcircledbullet$ whenever $w(i)=v(i)$. We denote by $\Br(v,w)$ the Bruhat picture of the pair $(v,w)$.
\end{definition}

\begin{remark}\label{rem:bruhatflipping}
    It is clear from the definitions of Bruhat pictures and $w_0$ that we can obtain $\Br(w_0w,w_0v)$ by reflecting $\Br(v,w)$ over a vertical axis, replacing the filled dots with unfilled dots, and replacing the unfilled dots with filled dots.
\end{remark}

For instance, below in \Cref{fig:examplepairwithoutshading} is the plot for $v=[1,2,5,3,4]$ and $w=[2,4,5,3,1]$. 

In general, we only work with particular subsets of the Bruhat picture, labeling only the rows $i$ and points $\pt_v(i),\pt_w(i)$ of interest.

\begin{definition}
    We define the rank function $r_w(p,q):=\#\{i\leq p:w(i)\geq q\}$ for $p,q\in \mathbb Z$, and the difference function $d_{v,w}(p,q):=r_w(p,q)-r_v(p,q)$.
\end{definition}

In $\Br(v,w)$, we shade the points $(p,q)$ in $\mathbb Z^2$ such that $d_{v,w}(p,q)$ is positive. For instance, below in \Cref{fig:examplepairwithshading} is the plot of the pair $v$ and $w$ in \Cref{fig:examplepairwithoutshading}, now with shading.

\begin{figure}[ht]
    \centering

    \begin{subfigure}[t]{0.31\textwidth}
        \centering
        \includegraphics[width=\linewidth]{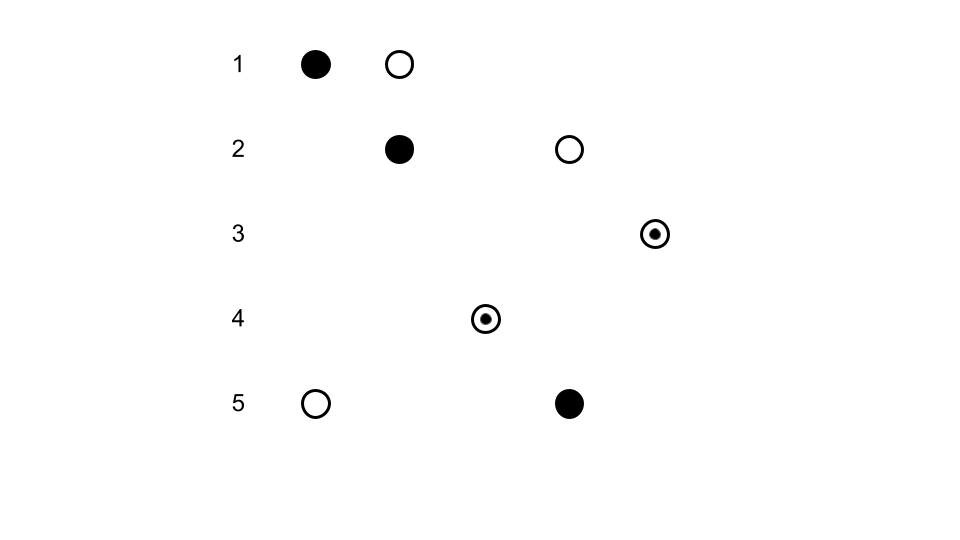}
        \caption{The Bruhat picture $\Br(v,w)$ for
        $v=[1,2,5,3,4]$ and $w=[2,4,5,3,1]$.}
        \label{fig:examplepairwithoutshading}
    \end{subfigure}
    \hfill
    \begin{subfigure}[t]{0.31\textwidth}
        \centering
        \includegraphics[width=\linewidth]{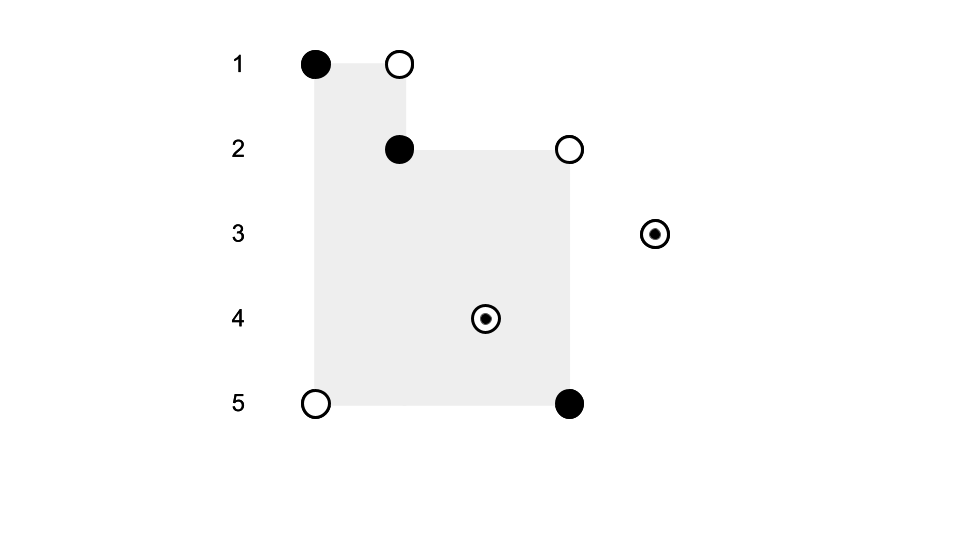}
        \caption{The same Bruhat picture with shading. On the shaded region,
        we have $d_{v,w}\equiv 1$, and outside it, we have $d_{v,w}\equiv 0$.}
        \label{fig:examplepairwithshading}
    \end{subfigure}
    \hfill
    \begin{subfigure}[t]{0.31\textwidth}
        \centering
        \includegraphics[width=\linewidth]{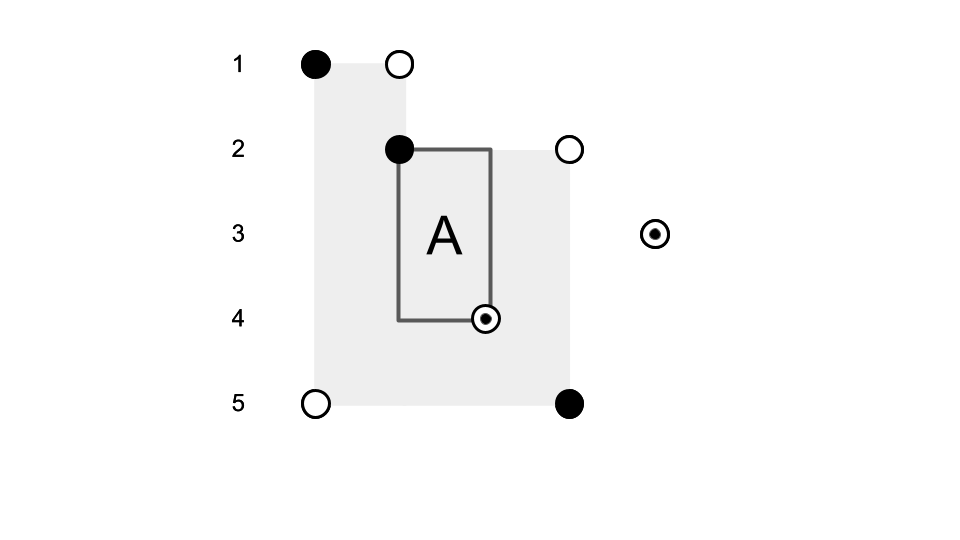}
        \caption{The same Bruhat picture with the region
        $\mcA_{24}(v)$ indicated as region $A$.}
        \label{fig:examplerectangle}
    \end{subfigure}

    \caption{Bruhat pictures for
    $v=[1,2,5,3,4]$ and $w=[2,4,5,3,1]$.}
    \label{fig:examplepairshading}
\end{figure}

\begin{definition}
    For $i<j$, we write $\mcA_{ij}(v)=(i,j)\times (v(i),v(j))\subseteq \mathbb Z^2$. 
\end{definition}

We now recall some general results on properties of $d_{v,w}$ from \cite{billey2001maximalsingularlocischubert}.

\begin{lemma}[{\cite[Lemma 3, Fact 7]{billey2001maximalsingularlocischubert}}]\label{lem:visuallydetermineR}
The following properties of $d_{v,w}$ hold.
    \begin{enumerate}
        \item We have $v\preceq w$ if and only if $d_{v,w}(p,q)\geq 0$ for all $p,q\in \mathbb Z$.
        \item Let $v\preceq w$ and $i\leq j$ such that $v\prec vs_{ij}$ (i.e., $v(i)< v(j)$). Then $(i,j)\in R(v,w)$ if and only if $d_{v,w}(p,q)$ is positive for $(p,q)\in \mcA_{ij}(v)$.
    \end{enumerate}
\end{lemma}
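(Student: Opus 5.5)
We would prove the two parts in order, deducing (2) from (1) together with an explicit computation of the difference function for a single transposition.

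For part (1) we would appeal to the classical tableau (rank-matrix) criterion for Bruhat order in $S_n$: $v\preceq w$ if and only if $r_v(p,q)\leq r_w(p,q)$ for all $p,q\in[n]$. This is \cite[Theorem 2.1.5]{BjornerBrenti} in one of its equivalent forms. Since $d_{v,w}=r_w-r_v$, this is exactly the statement $d_{v,w}\geq 0$ on $[n]^2$. Outside $[n]^2$ both rank functions are constant in the appropriate directions and agree: they both count all $i\leq p$ or none. So $d_{v,w}$ vanishes there, or equals its value on the nearest boundary node, and the inequality extends to all of $\mathbb Z^2$. If a self-contained argument is preferred, one direction goes by induction along a chain of transpositions $v\prec vs_{ab}\prec\cdots\prec w$, using the computation in the next paragraph. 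The converse is the usual argument that one can always find a transposition moving $v$ upward while keeping $r_v\leq r_w$. We would not reprove this and would simply cite it.

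For part (2), set $u=vs_{ij}$ with $i<j$ and $v(i)<v(j)$, so that $v\prec u$. The key computation is $d_{v,u}$. The permutations $u$ and $v$ differ only in positions $i$ and $j$, where the values $v(i)$ and $v(j)$ are exchanged. Hence $r_u(p,q)-r_v(p,q)$ can be nonzero only when exactly one of $i,j$ is $\leq p$, that is, when $i\leq p<j$. In that case it equals $[v(j)\geq q]-[v(i)\geq q]$, which is $1$ exactly when $v(i)<q\leq v(j)$ and $0$ otherwise. Thus $d_{v,u}$ is the indicator function of the lattice rectangle $\{i\leq p<j,\ v(i)<q\leq v(j)\}$. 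Under the paper's grid convention for the Bruhat picture, where shading is attached to the cells between nodes, this is the region $\mcA_{ij}(v)$.

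Now observe that
\begin{align*}
    d_{u,w}=r_w-r_u=d_{v,w}-d_{v,u}.
\end{align*}
By part (1), $u\preceq w$ holds if and only if $d_{u,w}\geq 0$ everywhere. That is, we need $d_{v,w}\geq 1$ on $\mcA_{ij}(v)$ and $d_{v,w}\geq 0$ elsewhere. The second condition is automatic from $v\preceq w$, again by part (1). Since $v\prec u$ is already given, we conclude that $(i,j)\in R(v,w)$ if and only if $d_{v,w}$ is positive on $\mcA_{ij}(v)$.

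The only real obstacle is bookkeeping. We must match the half-open index ranges $i\leq p<j$ and $v(i)<q\leq v(j)$ produced by the rank function $r_w(p,q)=\#\{i\leq p: w(i)\geq q\}$ with the paper's open-interval notation $(i,j)\times(v(i),v(j))$. This needs a careful statement of whether points of $\mathbb Z^2$ are read as nodes or as cells of the picture, and we would fix that convention explicitly before the computation.
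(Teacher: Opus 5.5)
Your proposal is correct. The paper does not prove this lemma itself; it quotes it from Billey--Warrington. Your argument is the standard one: the tableau criterion \cite[Theorem 2.1.5]{BjornerBrenti} gives (1), and the identity $d_{vs_{ij},w}=d_{v,w}-\mathbf{1}_{\{i\le p<j,\ v(i)<q\le v(j)\}}$ combined with (1) gives (2). This identity is exactly the computation the paper carries out in the proof of \Cref{lem:fliplemma}.

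Your caution about conventions is a necessary correction, not mere bookkeeping. Read literally as a set of integer points, $(i,j)\times(v(i),v(j))$ is empty when $j=i+1$; then $v=w=e$ in $S_2$ would wrongly give $(1,2)\in R(e,e)$. So $\mcA_{ij}(v)$ must be read as the open real rectangle, which is equivalent to the half-open integer box that your computation produces.
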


This lemma allows us read off the set $R(v,w)$ from $\Br(v,w)$. In particular, we have $(i,j) \in R(v,w)$ if and only if the box $\mcA_{ij}(v)$ (whose top left corner is $\pt_v(i)$ and bottom right corner is $\pt_v(j)$) is shaded. Applying this to \Cref{fig:examplepairwithshading}, we have by inspection of the Bruhat picture that $v\preceq w$ and that $R(v,w)=\{(1,2),(2,4),(2,5),(4,5)\}$.

\begin{definition}
    Define the set of \textit{maximal singular points} $\maxsing(w)$ to be the set of $v\preceq w$ such that $X(w)$ is singular at $e_v$, and for any $v'$ with $v\prec v'\preceq w$, $X(w)$ is smooth at $e_{v'}$. 
\end{definition}

The main theorem of \cite{billey2001maximalsingularlocischubert} is a characterization of $\maxsing(w)$.

\begin{theorem}[Billey-Warrington]\label{thm:maxsingclassification}
    We have $v\in \maxsing(w)$ if and only if $v=wc$, with $c$ a cycle of the form $(\alpha_1,\dots,\alpha_m,\beta_k,\dots,\beta_1)$, satisfying the following conditions. We emphasize that the permutation $c$ here is written in cycle notation, contrary to our prevailing convention. First, we put the following requirements on $c$.
    \begin{align*}
        1&\leq \alpha_1< \dots< \alpha_m\leq n,\text{ with }w(\alpha_1)> \cdots > w(\alpha_m), \text{ and }\\
        1&\leq \beta_1< \dots< \beta_k\leq n,\text{ with }w(\beta_1)> \cdots > w(\beta_k),
    \end{align*}
    Second, one of the following cases must hold.
    \begin{enumerate}
        \item The 4231 Case: $k,m\geq 2$, $w(\alpha_m) > w(\beta_1)$, and
        \begin{align*}
            \alpha_1&< \beta_1,\dots,\beta_{k-1}< \alpha_2,\dots,\alpha_m< \beta_k.
        \end{align*}
        Also, in \Cref{fig:4231withshading}, which depicts a subset of $\Br(v,w)$ in this case, the shaded region contains no filled or unfilled dots.
        \item The 3412 Case: $k,m\geq 2$
        \begin{align*}
            w(\alpha_{m-1})> w(\beta_1)> w(\alpha_m)> w(\beta_2),\text{ and}\\
            \beta_1,\dots,\beta_{k-1}< \alpha_1< \beta_k< \alpha_2,\dots,\alpha_m.
        \end{align*}
        Also, the shaded region of \Cref{fig:3412withshading} contains no filled or unfilled dots.
        \item The 45312 Case: $k=m=2$, 
        \begin{align*}
            w(\alpha_1)> w(\beta_1)> w&(\alpha_2)> w(\beta_2),\text{ and}\\
            \beta_1< \alpha_1&< \beta_2< \alpha_2.
        \end{align*}
        Also, if we have filled dots in the shaded region of \Cref{fig:45312withshading}, then they are concentric circles, and they are in decreasing order as indicated.
    \end{enumerate}
\end{theorem}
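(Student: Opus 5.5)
Since this is the main theorem of \cite{billey2001maximalsingularlocischubert}, the paper can simply cite it. If I were to reprove it, my tool would be the tangent space criterion coming from the theorem of Lakshmibai--Seshadri above. Namely, $X(w)$ is smooth at $e_u$ if and only if
\begin{align*}
\#\{(i,j): i<j,\ us_{ij}\preceq w\}=\ell(w).
\end{align*}
By \Cref{lem:visuallydetermineR}, this count can be read off $\Br(u,w)$. One counts the boxes $\mcA_{ij}(u)$ lying in the region where $d_{u,w}>0$, together with the pairs $(i,j)$ with $u(i)>u(j)$, of which there are $\ell(u)$. So smoothness at $e_u$ becomes the equality ``number of shaded boxes $=\ell(w)-\ell(u)$''. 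The singular locus is closed and $B$-stable, so it is a union of Schubert varieties, and $\maxsing(w)$ is the set of its irreducible components.

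\textbf{Sufficiency.} I would take $v=wc$ with $c$ of one of the three shapes and check two things.
\begin{enumerate}
\item Using the explicit picture, the shaded region of $\Br(v,w)$ is a staircase determined by the $\alpha$'s and $\beta$'s, since the emptiness hypotheses force $d_{v,w}\in\{0,1\}$ or $\{0,1,2\}$ there. Counting the shaded boxes then gives strictly more than $\ell(w)-\ell(v)$. In the $4231$ case the excess comes from the pairs straddling the two chains; in the $3412$ and $45312$ cases it comes from the overlap region.
\item For maximality, let $v\prec v'\preceq w$. Then $d_{v',w}\le d_{v,w}$ pointwise. Because the shaded region contains no further dots, $v'$ must be obtained by partially ``unwinding'' $c$. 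I would verify that the count becomes tight for each such $v'$. It suffices to check $v'=vs_{ij}$ covering $v$, using closedness of the singular locus.
\end{enumerate}

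\textbf{Necessity.} Let $v\in\maxsing(w)$. First, choose a transposition $t$ with $v\prec vt\preceq w$ such that the count is smooth at $vt$. Comparing the two counts shows that the extra shaded boxes at $v$ are created by a pattern of $w$ equal to $4231$ or $3412$, restricted to the rows where $v$ and $w$ differ. Next, maximality forces every row where $v(i)\neq w(i)$ to participate in a single cycle of $w^{-1}v$. Otherwise one could fix a row and move upward while staying singular. The same argument forces the chains $\alpha,\beta$ to be decreasing in $w$ and the shaded regions to be empty. Finally, a case analysis on the relative position of the two chains yields exactly the three listed interleavings.

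\textbf{Main obstacle.} I expect the hard part to be the maximality analysis, in both directions, and especially the $45312$ case. There, dots inside the shaded region are permitted only when they are fixed points in decreasing order. This requires a careful accounting of how $d_{v,w}$ changes under each covering move. I would first try to handle it by induction on the number of concentric dots in the region.
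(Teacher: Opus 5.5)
Your proposal matches the paper: the statement is quoted directly from Billey--Warrington and the paper gives no argument of its own, so citing \cite{billey2001maximalsingularlocischubert} is exactly what it does. The reproof you append is not needed, and as written it is only an outline, not a proof: the necessity direction asserts the decisive steps without deriving them (that the excess tangent directions come from a $4231$ or $3412$ pattern, that maximality forces a single cycle with decreasing chains and empty regions, and that only the three interleavings survive), and the maximality step's reliance on the shaded region containing no further dots does not apply in the $45312$ case, where fixed points are allowed there; so rely on the citation rather than the sketch.
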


While the above cases are somewhat difficult to parse, the pairs $v,w$ can be described in terms of their Bruhat pictures. In the 4231 Case, we have \Cref{fig:4231withshading}. In the 3412 Case, we have \Cref{fig:3412withshading}. Finally, in the 45312 Case, we have \Cref{fig:45312withshading}. Importantly, the shaded regions below have no points in them (except for the concentric circles in the 45312 Case). The dotted lines in these figures simply indicate that the pattern repeats.

\begin{figure}[ht]
    \centering
    \begin{subfigure}[t]{0.47\textwidth}
        \centering
        \includegraphics[width=\linewidth]{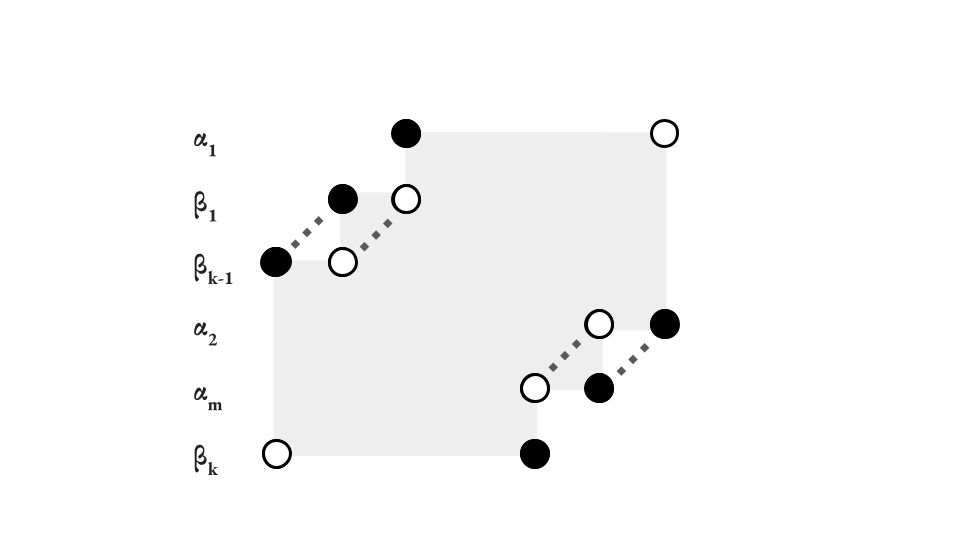}
        \caption{The Bruhat picture $\Br(v,w)$ with $v\in \maxsing(w)$ in the 4231 Case. On the shaded region, $d_{v,w}\equiv 1$, and outside of the shaded region, $d_{v,w}\equiv 0$. The shaded region contains no filled or unfilled points. By the dotted lines, we mean a continuation of the staircase pattern.}
        \label{fig:4231withshading}
    \end{subfigure}
    \hfill
    \begin{subfigure}[t]{0.47\textwidth}
        \centering
        \includegraphics[width=\linewidth]{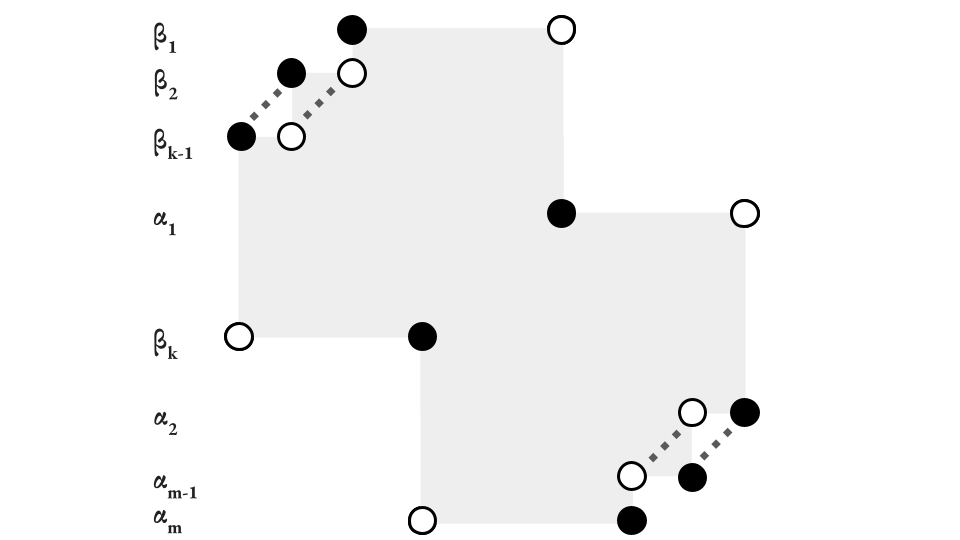}
        \caption{The Bruhat picture $\Br(v,w)$ with $v\in \maxsing(w)$ in the 3412 Case. On the shaded region, $d_{v,w}\equiv 1$, and outside of the shaded region, $d_{v,w}\equiv 0$. The shaded region contains no filled or unfilled points. By the dotted lines, we mean a continuation of the staircase pattern.}
        \label{fig:3412withshading}
    \end{subfigure}
    \caption{}
    \label{fig:billeywarringtoncases}
\end{figure}

\begin{figure}[h]
\begin{center}
    \includegraphics[width=8cm]{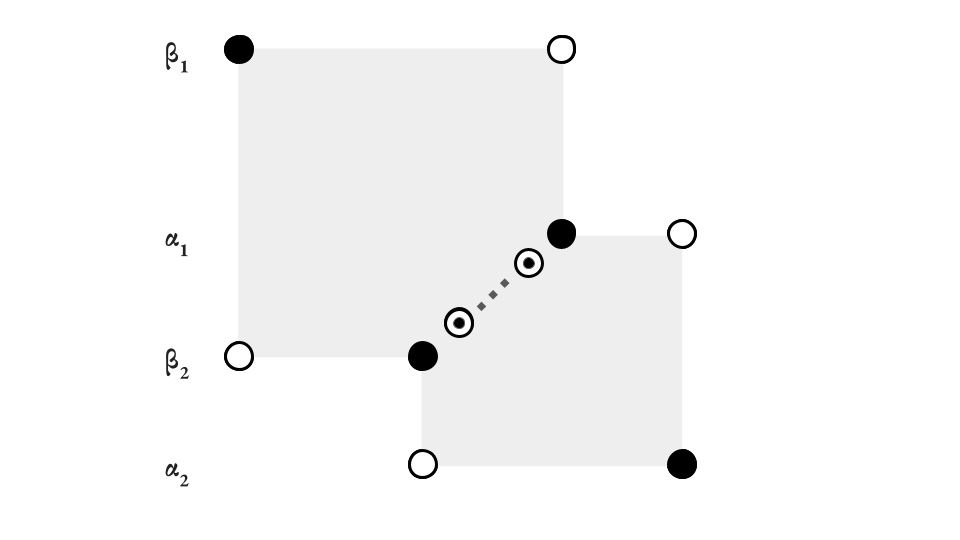}
\end{center}
\caption{The Bruhat picture $\Br(v,w)$ with $v\in \maxsing(w)$ in the 45312 Case. On the shaded region, $d_{v,w}\equiv 1$, and outside of the shaded region, $d_{v,w}\equiv 0$. The shaded region contains no filled or unfilled points other than the indicated concentric circles. By the dotted line, we mean that the concentric circles continue down and to the left in order. That is, if $\pt_i(v)$ and $\pt_j(v)$ are two of these dots with $i<j$, then $\pt_i(v)$ is to the top right of $\pt_j(v)$.}
\label{fig:45312withshading}
\end{figure}

\section{Maximal Singular Points}\label{sec:maxsing}

In this section, we prove that if $w_0w\in \maxsing(w_0v)$, then there is no open $B_v$-orbit in $N^\vee_{X(w),e_v}C(v)$. We will do this via a directed graph $\mcG_1(v,w)$. We address the 4231 and 3412 Cases in \Cref{sec:maximalsingular4}, and we address the 45312 Case in \Cref{sec:maxsing5}. In \Cref{sec:maxsing5}, we use this to conclude that if $X(w_0v)$ is singular at $e_{w_0w}$, then there is no open $B_v$-orbit in $N^\vee_{X(w),e_v)}C(v)$. 

\subsection{Maximal Singular Points in the 4231 and 3412 Cases}\label{sec:maximalsingular4}

In this subsection, we will prove that for any $v\preceq w$ such that $w_0w\in \maxsing(w_0v)$ in the 4231 or 3412 Cases, the rank of $M^{v,w}(x)$ is at most $|R(v,w)|-1$, and thus that there is no open orbit in $N^\vee_{X(w),e_v}C(v)$.

We will now introduce a graph $\mcG_1(v,w)$ such that a cycle in the graph gives us information about the rank of $M^{v,w}(x)$.

\begin{definition}
    Define the partial order $\trianglelefteq$ on $R(v,w)$ such that $(i,j)\trianglelefteq(i',j')$ if and only if we have the containment of rectangles $\mcA_{ij}(v)\subseteq \mcA_{i'j'}(v)$.
\end{definition}

We let
\begin{align*}
    E_0(v,w)=\{(i,i):(i,j)\in R(v,w)\text{ or }(j,i)\in R(v,w)\text{ for some }j\},
\end{align*}
and let $E_1(v,w)$ be the elements of $R(v,w)$ which are minimal with respect to $\trianglelefteq$. Visually, $(i,j)$ is in $E_1(v,w)$ if there are no filled dots in the rectangle $\mcA_{ij}(v)$. We define the directed graph $\mcG_1(v,w)$, with vertices labeled by $E_0(v,w)$ and edges labeled by $E_1(v,w)$, such that the source of the edge $(i,j)$ is $(i,i)$ and the target is $(j,j)$. This turns out to be the same, up to a choice of convention, as the atom graph considered by Tsukerman--Williams in \cite{TsukermanWilliamsBruhatIntervalPolytopes}. By construction, the graph $\mcG_1(v,w)$ has no self-edges or multi-edges.

For example, in our previous example \Cref{fig:examplepairwithshading}, we have $(2,4)\trianglelefteq (2,5)$ and $(4,5)\trianglelefteq (2,5)$. Thus, we have $E_0(v,w)=\{(1,1),(2,2),(4,4),(5,5)\}$ and $E_1(v,w)=\{(1,2),(2,4),(4,5)\}$, so $\mcG_1(v,w)$ is a directed path of length 3. 

\begin{definition}
    For a directed graph $\mcG=(V,E)$, the signed incidence matrix is the matrix $A_\mathcal G$ with rows indexed by $V$ and columns indexed by $E$ such that
    \begin{align*}
        (A_\mathcal G)_{v_1,(v_2,v_3)}=\begin{cases}
            1&\text{if }v_1=v_2\\
            -1&\text{if }v_1=v_3\\
            0&\text{otherwise}.
        \end{cases}
    \end{align*}

\end{definition}

We recall the standard formula for the rank of the signed incidence matrix {\cite[Theorem 8.3.1]{godsil01}}.

\begin{theorem}\label{thm:incidencematrixrank}
    Let $\mcG=(V,E)$ be a simple undirected graph with $k$ connected components. Then the rank of $A_\mathcal G$ is $|V|-k$.
\end{theorem}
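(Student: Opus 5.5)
The statement is the classical rank formula for the signed incidence matrix, and I would prove it by analysing the left kernel of $A_\mathcal G$. Fix an arbitrary orientation of each edge; changing the orientation of an edge only multiplies one column by $-1$, so the rank is independent of this choice.

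The rank of $A_\mathcal G$ equals $|V|$ minus the dimension of the space of row vectors $y\in\mathbb C^V$ with $yA_\mathcal G=0$. The column indexed by the edge $(v_2,v_3)$ is $e_{v_2}-e_{v_3}$, so the condition $yA_\mathcal G=0$ says exactly that $y_{v_2}=y_{v_3}$ for every edge.

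Suppose $y$ lies in the left kernel. Whenever two vertices are joined by a path, repeated application of this equality shows that $y$ takes the same value at both. Hence $y$ is constant on each connected component. Conversely, any vector that is constant on each component satisfies all of these equations.

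The left kernel is therefore spanned by the indicator vectors $\mathbf 1_{C_1},\dots,\mathbf 1_{C_k}$ of the components. These vectors have disjoint supports, so they are linearly independent, and the left kernel has dimension exactly $k$. Since row rank equals column rank, we conclude $\rank A_\mathcal G=|V|-k$.

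There is no real obstacle here. The only points needing care are that the orientation chosen to define $A_\mathcal G$ for an undirected graph does not affect the rank, and that simplicity of $\mcG$ plays no role beyond making $A_\mathcal G$ well defined (no loops). In the application to $\mcG_1(v,w)$, which has no self-edges or multi-edges by construction, the directed structure is used directly.
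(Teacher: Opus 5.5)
Your argument is correct. The paper gives no proof of its own and simply cites \cite[Theorem 8.3.1]{godsil01}; the proof there is this same argument, showing that the left kernel of the incidence matrix is spanned by the indicator vectors of the $k$ connected components.
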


For a directed graph $\mcG=(V,E)$, we denote by $\overline \mcG$ the undirected graph whose edge relation is the symmetric closure of $E$. 

\begin{corollary}\label{cor:allcomponentstree}
    The rank of the signed incidence matrix of $\mcG=(V,E)$ is $|E|$ (i.e., the signed incidence matrix has full rank) if and only if every connected component of $\overline \mcG$ is a tree.
\end{corollary}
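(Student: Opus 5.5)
The statement to prove is \Cref{cor:allcomponentstree}: the signed incidence matrix $A_\mcG$ of a directed graph $\mcG=(V,E)$ has rank $|E|$ if and only if every connected component of $\overline\mcG$ is a tree. The plan is to reduce to \Cref{thm:incidencematrixrank} and then compare two counts: the rank that theorem gives, and the edge count forced by acyclicity.

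First, note that $A_\mcG$ and the signed incidence matrix of $\overline\mcG$ (for any choice of orientation) agree up to multiplying some columns by $-1$. This holds as long as $\mcG$ has no self-edges and no pair of antiparallel edges. Scaling columns does not change rank. So if $\overline\mcG$ has $k$ connected components, \Cref{thm:incidencematrixrank} gives $\rank A_\mcG=|V|-k$.

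Second, decompose $\overline\mcG$ into its components $C_1,\dots,C_k$, with vertex counts $|V_t|$ and edge counts $|E_t|$. A connected graph on $|V_t|$ vertices has at least $|V_t|-1$ edges, with equality exactly when it is a tree. Summing over components,
\begin{align*}
|E|=\sum_{t=1}^k |E_t|\ \geq\ \sum_{t=1}^k(|V_t|-1)=|V|-k=\rank A_\mcG,
\end{align*}
and equality holds if and only if every $C_t$ is a tree. Therefore $\rank A_\mcG=|E|$ exactly when each component of $\overline\mcG$ is a tree. The reverse inequality $\rank A_\mcG\le |E|$ is automatic anyway, since $A_\mcG$ has $|E|$ columns.

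The only delicate point is the passage from $\mcG$ to $\overline\mcG$. If $\mcG$ contained both $(a,b)$ and $(b,a)$, then $\overline\mcG$ would have fewer edges than $\mcG$, and the columns of $A_\mcG$ would be linearly dependent. In that case the statement should be read with that pair counted as a $2$-cycle, that is, treating $\overline\mcG$ as a multigraph, which is not a tree. For the graphs $\mcG_1(v,w)$ this situation does not arise: every edge $(i,j)$ has $i<j$, so no antiparallel pairs or self-edges occur. I would note this explicitly and then apply the argument above.
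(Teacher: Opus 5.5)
Your proof is correct and follows the paper's argument: the rank $|V|-k$ from \Cref{thm:incidencematrixrank} is compared with $|E|=\sum_C|E(C)|\geq\sum_C(|V(C)|-1)$, and equality holds exactly when every component is a tree. Your caveat about self-edges and antiparallel pairs is a genuine clarification that the paper leaves implicit (for $E=\{(a,b),(b,a)\}$ the literal statement fails), and it causes no trouble here because neither situation occurs in $\mcG_1(v,w)$ or in the higher $\mcG_i(v,w)$.
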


\begin{proof}
    For each connected component $C$, we have $|E(C)|=|V(C)|-1$ if and only if $C$ is a tree. Summing over the connected components yields the corollary.
\end{proof}

\begin{corollary}\label{cor:nocycle}
    If $\overline{\mcG_1(v,w)}$ contains a cycle, then $M^{v,w}(x)$ does not have full rank for any $x\in N^\vee_{X(w),e_v}C(v)$.
\end{corollary}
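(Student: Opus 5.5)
The plan is to exhibit an explicit nonzero vector in the left kernel of $M^{v,w}(x)$, i.e.\ a nontrivial linear relation among its rows, built from a cycle of $\overline{\mcG_1(v,w)}$. The rows of $M^{v,w}(x)$ are indexed by $R(v,w)$, and the edges $E_1(v,w)$ form a subset of $R(v,w)$. I will look at the rows indexed by $E_1(v,w)$ together with the columns indexed by the diagonal pairs $(r,r)$, $r\in E_0(v,w)$; these all lie in $S(v)$. First I will compute this block and identify it, up to a transpose and row scalings, with the signed incidence matrix of $\mcG_1(v,w)$. I will then use the cycle to produce a left-kernel vector of the whole matrix, not just of this block.

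\textbf{Step 1: the diagonal columns.} Take $(a,b)\in E_1(v,w)$ and the column $(r,r)$. By the definition of $M^{v,w}(x)$, the entry is $x_{v(r)v(b)}$ if $a=r$ and $(r,b)\in R(v,w)$. It is $-x_{v(a)v(r)}$ if $b=r$. So the row $(a,b)$ has $x_{v(a)v(b)}$ in column $(a,a)$, has $-x_{v(a)v(b)}$ in column $(b,b)$, and is zero elsewhere among diagonal columns. This is exactly the incidence column of the edge $(a,b)$ scaled by $x_{v(a)v(b)}$. Given a cycle in $\overline{\mcG_1(v,w)}$ with edges $e_1,\dots,e_\ell$ and orientation signs $\epsilon_t=\pm1$, the combination $\sum_t \epsilon_t\, x_{e_t}^{-1}\,\mathrm{row}_{e_t}$ therefore vanishes on all diagonal columns. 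This needs $x_{e_t}\neq 0$. If some $x_{e_t}=0$, I would instead argue that the row $e_t$ becomes sparse enough to force a rank drop directly. More cleanly, I will first treat generic $x$ and then note that rank is lower semicontinuous, so full rank at any $x$ would force full rank generically.

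\textbf{Step 2: the off-diagonal columns.} A column $(r,s)\in S(v)$ with $r<s$ meets the row $(a,b)$ in $x_{v(s)v(b)}$ when $a=r$ and $(s,b)\in R(v,w)$, or in $-x_{v(a)v(r)}$ when $s=b$ and $(a,r)\in R(v,w)$. Consider the first case. Since $(r,s)\in S(v)$, we have $v(a)<v(s)$, and if also $a<s<b$ with $v(s)<v(b)$, then $\pt_v(s)$ is a filled dot inside $\mcA_{ab}(v)$. That contradicts the minimality of $(a,b)$ in $E_1(v,w)$. The remaining configurations are those in which $s$ lies outside the interval $(a,b)$ or $v(s)$ lies outside $(v(a),v(b))$. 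There I will use the requirement $(s,b)\in R(v,w)$, which forces $s<b$ and $v(s)<v(b)$, together with $(a,s)\in S(v)$. The goal is to derive a contradiction, or to show that $\pt_v(s)$ lies in the rectangle. The second case is symmetric, with the dot $\pt_v(r)$ in place of $\pt_v(s)$.

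\textbf{Main obstacle.} Step 2 is where I expect trouble. It is not obvious that every off-diagonal entry in an $E_1$ row vanishes. For example, a row $(a,b)$ could pick up $x_{v(s)v(b)}$ from a column $(a,s)$ with $s<a$, although $(r,s)\in S(v)$ requires $r\le s$, which excludes this. If Step 2 fails, I will not try to show these entries are zero. Instead, for $x$ with all coordinates nonzero, I will show that the cycle relation already kills these columns through cancellation around the cycle. Failing that, I will use a counting argument. Each column $(r,s)$ with $r<s$ meets $E_1$ rows only through pairs sharing an endpoint. Then I can bound the rank by $|R(v,w)|-1$ by showing that the column space, restricted modulo the non-$E_1$ rows, factors through the incidence matrix, whose rank is deficient by \Cref{cor:allcomponentstree}.
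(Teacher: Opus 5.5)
Your plan matches the paper's. The block of $M^{v,w}(x)$ with rows $E_1(v,w)$ and diagonal columns $(r,r)$, $r\in E_0(v,w)$, is a row-scaled transpose of the signed incidence matrix of $\mcG_1(v,w)$, and a cycle gives a dependency among the $E_1$ rows. Step 1 is correct, and your semicontinuity device for coordinates $x_{v(a)v(b)}=0$ is valid.

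\textbf{The gap is Step 2.} The corollary rests on the claim that an $E_1$ row has no nonzero entry outside the diagonal columns. You leave this as a ``goal'' and say it is ``not obvious''. The fallbacks you offer do not replace it. For generic $x$ nothing forces stray entries such as $x_{v(s)v(b)}$ to cancel around the cycle. The counting argument would have to factor through the incidence matrix, and that needs exactly the vanishing you have not proved. So, as written, the proposal does not prove the statement.

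\textbf{How to close it.} The missing step follows in one line from the inequalities you already listed. A nonzero entry of row $(a,b)\in E_1(v,w)$ in column $(r,s)$ arises in one of two ways.
\begin{itemize}
\item[(i)] $r=a$, with $(a,s)\in S(v)$ and $(s,b)\in R(v,w)$. This gives $a\le s<b$ and $v(a)\le v(s)<v(b)$.
\item[(ii)] $s=b$, with $(r,b)\in S(v)$ and $(a,r)\in R(v,w)$. This gives $a<r\le b$ and $v(a)<v(r)\le v(b)$.
\end{itemize}
So your ``remaining configurations'', with $s$ outside $(a,b)$ or $v(s)$ outside $(v(a),v(b))$, cannot occur at all.

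In case (i), suppose $s\neq a$. Then the pair $(s,b)$, which lies in $R(v,w)$ by the very condition making the entry nonzero, satisfies $\mcA_{sb}(v)\subsetneq\mcA_{ab}(v)$. Indeed $\pt_v(s)$ lies in $\mcA_{ab}(v)$ but not in $\mcA_{sb}(v)$. Hence $(s,b)\trianglelefteq(a,b)$ with $(s,b)\neq(a,b)$, contradicting the minimality of $(a,b)$. The column is therefore $(a,a)$. Case (ii) is identical with $(a,r)$ in place of $(s,b)$, and the column is forced to be $(b,b)$. No appeal to \Cref{lem:visuallydetermineR} is needed.

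Once this is in place, the genericity detour is unnecessary. As in the paper, the $E_1$ rows are supported in the block $D=\operatorname{diag}(x_{v(a)v(b)})\,A_{\mcG_1(v,w)}^{T}$. They therefore span a space of dimension at most $\rank A_{\mcG_1(v,w)}$, which is less than $|E_1(v,w)|$ by \Cref{cor:allcomponentstree}. Hence $M^{v,w}(x)$ cannot have rank $|R(v,w)|$ for any $x$.
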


\begin{proof}
    For $(a,b) \in E_1(v,w)$, observe that 
    \begin{align*}
        M^{v,w}(x)_{(a,b),(r,s)}
        =\begin{cases} 
                x_{v(a)v(b)} & \text{if $(r,s) = (a,a)$} \\
                -x_{v(a)v(b)} & \text{if $(r,s) = (b,b)$} 
                 \\
                 0 & \text{otherwise.}
        \end{cases} 
    \end{align*}
    Thus, the submatrix $D$ of $M^{v,w}(x)$ with rows indexed by $E_1(v,w)$ and columns indexed by $E_0(v,w)$ is (up to scaling of rows) the transpose of the signed incidence matrix of $\mcG_1(v,w)$. In particular, the rank of $D$ is at most the rank of the signed incidence matrix. In fact, because every nonzero entry of $M^{v,w}(x)$ in the rows $ E_1(v,w)$ belongs to the submatrix $D$, then $M^{v,w}(x)$ does not have full rank if $D$ does not have full rank. Now, if there is a cycle in $\overline{\mcG_1(v,w)}$, the signed incidence matrix of $\mcG_1(v,w)$ does not have full rank by \Cref{cor:allcomponentstree}. Thus $D$ does not have full rank, so the result follows.
\end{proof}

We will use the preceding corollary to show that if $v\preceq w$ such that $w_0w\in \maxsing(w_0v)$ in the 4231 or 3412 Cases, then $M^{v,w}(x)$ does not have full rank for any $x\in N^\vee_{X(w),e_v}C(v)$. We will do this by showing that in either case, the graph $\overline{\mcG_1(v,w)}$ contains a cycle.

\begin{lemma}\label{lem:4231maxsing}
    Let $v\preceq w$ such that $w_0w\in \maxsing(w_0v)$ in the 4231 Case. Then $M^{v,w}(x)$ does not have full rank for any $x$ in $N^\vee_{X(w),e_v}C(v)$.
\end{lemma}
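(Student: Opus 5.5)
By \Cref{cor:nocycle}, it suffices to exhibit a cycle in $\overline{\mcG_1(v,w)}$. I will translate the Billey--Warrington picture for $w_0w\in\maxsing(w_0v)$ into the picture $\Br(v,w)$, read off enough of $R(v,w)$ from the shading via \Cref{lem:visuallydetermineR}, and identify the minimal elements forming a cycle.

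\textbf{Step 1: transfer the picture.} Apply \Cref{thm:maxsingclassification} to the pair $(w_0w, w_0v)$ in the 4231 Case. This gives indices $\alpha_1<\beta_1,\dots,\beta_{k-1}<\alpha_2,\dots,\alpha_m<\beta_k$ with the staircase pattern, and the shaded region of \Cref{fig:4231withshading} empty of dots. By \Cref{rem:bruhatflipping}, $\Br(v,w)$ is obtained from $\Br(w_0w,w_0v)$ by reflecting over a vertical axis and swapping filled and unfilled dots. The flip $w\mapsto w_0w$ acts on row positions, so the reflection sends the row $i$ to the point with second coordinate $n+1-i$. Since $d$ is defined by rank conditions, I will check that the shaded region (where $d=1$) transforms correspondingly. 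Hence $\Br(v,w)$ contains a reflected staircase: the filled dots of $v$ sit at the former unfilled positions $\pt_{w_0v}(\alpha_i),\pt_{w_0v}(\beta_j)$, reflected, with shading $d_{v,w}\equiv 1$ on the reflected staircase region and $0$ outside.

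\textbf{Step 2: find $E_1$-edges.} Consider consecutive filled dots of $v$ along the staircase. There are two chains, coming from the $\alpha$'s and from the $\beta$'s, which meet at the two ends (the 4 and the 1 of 4231). For each consecutive pair $(i,j)$, $i<j$ in one chain with $v(i)<v(j)$, the rectangle $\mcA_{ij}(v)$ lies inside the shaded staircase. Therefore $(i,j)\in R(v,w)$ by \Cref{lem:visuallydetermineR}. Since the shaded region contains no filled dots, $\mcA_{ij}(v)$ contains no filled dots, so $(i,j)\in E_1(v,w)$. Walking along the $\alpha$-chain from one extreme dot to the other gives a path in $\mcG_1(v,w)$, and the $\beta$-chain gives a second path between the same two endpoints. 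The two paths are vertex-disjoint except at their ends, because the $\alpha$'s and $\beta$'s are distinct indices. Together they form a cycle in $\overline{\mcG_1(v,w)}$. This uses $k,m\ge 2$, so that each chain has at least one edge and the cycle has length $k+m\ge 4$, with no multi-edge degeneracy.

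\textbf{Main obstacle.} The careful step is the bookkeeping in Step 1. I need to correctly identify which points of $\Br(v,w)$ are filled dots of $v$, confirm that $v(i)<v(j)$ for each consecutive pair (orientation after reflection), and confirm that each small rectangle $\mcA_{ij}(v)$ really lies in the shaded region rather than crossing a boundary of the staircase. I would do this by computing $d_{v,w}$ explicitly on each rectangle from the figure. I would also handle the endpoints carefully, where the two chains share the extreme dots corresponding to $\alpha_1$ and $\beta_k$ (cyclically adjacent in $c$). The remaining step, the cycle implying rank deficiency, is immediate from \Cref{cor:nocycle}.
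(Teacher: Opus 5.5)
Your proposal is correct and is essentially the paper's proof: you flip the Billey--Warrington picture via \Cref{rem:bruhatflipping}, observe that $(\alpha_i,\alpha_{i+1})$, $(\beta_i,\beta_{i+1})$, $(\alpha_1,\beta_1)$ and $(\alpha_m,\beta_k)$ lie in $E_1(v,w)$ because the shaded region contains no dots, and conclude with \Cref{cor:nocycle}. The paper determines all of $R(v,w)$ and notes that $\overline{\mcG_1(v,w)}$ is itself this cycle, whereas you rightly observe that only the cycle edges are needed; two minor slips are that $\alpha_1$ and $\beta_k$ are not cyclically adjacent in $c$ (the chains close up via the adjacent pairs $\{\beta_1,\alpha_1\}$ and $\{\alpha_m,\beta_k\}$ of $c$), and that left multiplication by $w_0$ reflects values (columns), not rows.
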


\begin{proof}
    Observe that $\Br(w_0w,w_0v)$ has the form of \Cref{fig:4231withshading}. Thus, the Bruhat picture $\Br(v,w)$ is as in \Cref{fig:flipped4231withshading}, via \Cref{rem:bruhatflipping}.  
    
    % \begin{figure}[h]
    % \begin{center}
    %     \includegraphics[width=8cm]{Images/Flipped4231WithoutShading.png}
    % \end{center}
    % \caption{}
    % \label{fig:flipped4231withoutshading}
    % \end{figure}

    \begin{figure}[h]
    \begin{center}
        \includegraphics[width=8cm]{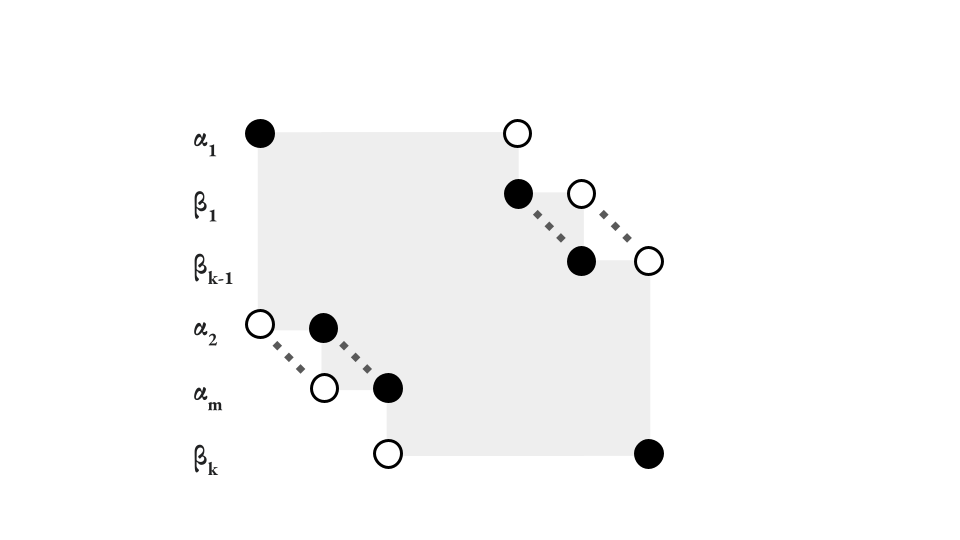}
    \end{center}
    \caption{The Bruhat picture $\Br(v,w)$ with $w_0w\in \maxsing(w_0v)$ in the 4231 Case. On the shaded region, $d_{v,w}\equiv 1$, and outside of the shaded region, $d_{v,w}\equiv 0$. The shaded region contains no filled or unfilled points.}
    \label{fig:flipped4231withshading}
    \end{figure}

    Recalling that the shaded region in \Cref{fig:4231withshading} has no points in it, the shaded region of \Cref{fig:flipped4231withshading} has no points in it. Applying \Cref{lem:visuallydetermineR}, we can see
    \begin{align*}
        R(v,w)&=\{(\beta_i,\beta_{i+1}):1\leq i\leq k-1\}\\
        &\cup \{(\alpha_i,\alpha_{i+1}):1\leq i\leq m-1\}\cup \{(\alpha_1,\beta_1),(\alpha_m,\beta_k)\}.
    \end{align*}
    
    Because the shaded area contains no dots, each of the above is minimal with respect to $\trianglelefteq$, and thus in $E_1(v,w)$. Therefore, the graph $\overline{\mcG_1(v,w)}$ is itself a cycle, and so $M^{v,w}(x)$ does not have full rank by \Cref{cor:nocycle}.
\end{proof}

\begin{lemma}\label{lem:3412maxsing}
    Let $v\preceq w$ such that $w_0w\in \maxsing(w_0v)$ in the 3412 Case. Then $M^{v,w}(x)$ does not have full rank for any $x\in N^\vee_{X(w),e_v}C(v)$.
\end{lemma}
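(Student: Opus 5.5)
The proof will follow the same strategy as \Cref{lem:4231maxsing}. We show that $\overline{\mcG_1(v,w)}$ contains a cycle and then invoke \Cref{cor:nocycle}.

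First, since $w_0w\in\maxsing(w_0v)$ in the 3412 Case, $\Br(w_0w,w_0v)$ has the form of \Cref{fig:3412withshading}. By \Cref{rem:bruhatflipping}, we obtain $\Br(v,w)$ by reflecting over a vertical axis and exchanging filled and unfilled dots. The shaded region still contains no dots, and $d_{v,w}\equiv 1$ on it and $0$ off it.

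Next, we read off $R(v,w)$ from this picture using \Cref{lem:visuallydetermineR}. The filled dots of $\Br(v,w)$ are the points $\pt_v(\alpha_i)$ and $\pt_v(\beta_j)$, which sit where the unfilled dots of $w_0w$ sat. In $\Br(w_0w,w_0v)$ the positions of $w_0v$ are $w_0w$ composed with the cycle. I expect $R(v,w)$ to consist of the following pairs:
\begin{enumerate}
\item the consecutive staircase pairs $(\beta_i,\beta_{i+1})$ for $1\le i\le k-1$;
\item the consecutive staircase pairs $(\alpha_i,\alpha_{i+1})$ for $1\le i\le m-1$;
\item two "connecting" pairs linking the $\alpha$-chain to the $\beta$-chain, coming from the crossing pattern $\beta_1,\dots,\beta_{k-1}<\alpha_1<\beta_k<\alpha_2,\dots$ together with $w(\alpha_{m-1})>w(\beta_1)>w(\alpha_m)>w(\beta_2)$.
\end{enumerate}
After the reflection these connecting pairs should be $(\beta_1,\alpha_1)$ (or $(\alpha_1,\beta_1)$, depending on orientation) and a pair involving $\alpha_m$ and $\beta_k$, analogous to the 4231 computation. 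The exact endpoints of the connecting pairs need to be checked carefully against the shading: each rectangle $\mcA_{ij}(v)$ must lie inside the shaded region.

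Every one of these rectangles lies in the empty shaded region, so none contains a filled dot. Hence each pair is minimal for $\trianglelefteq$ and belongs to $E_1(v,w)$. The edges then trace the two chains joined at both ends, so $\overline{\mcG_1(v,w)}$ is a single cycle through all the $\alpha_i$ and $\beta_j$. By \Cref{cor:nocycle}, $M^{v,w}(x)$ then fails to have full rank for every $x$.

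The main obstacle is the careful determination of $R(v,w)$ from the flipped 3412 picture. In the 3412 staircase the $\beta$ rows mostly precede the $\alpha$ rows, so after reflection one must verify two things:
\begin{enumerate}
\item the connecting pairs satisfy $v(i)<v(j)$ with the right index order;
\item no additional pairs, such as non-consecutive staircase pairs, enter $R(v,w)$ with shaded rectangles. Any such extra pairs would contain dots and hence be non-minimal, which is harmless: they only add rows outside $E_1(v,w)$.
\end{enumerate}
So the real content is that the $E_1$ edges close up into a cycle, and I would check this directly on the figure, as in the 4231 case.
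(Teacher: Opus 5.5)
You follow the same route as the paper: flip the Billey--Warrington picture, read off $R(v,w)$, check that the $\trianglelefteq$-minimal pairs close into a cycle of $\overline{\mcG_1(v,w)}$, and apply \Cref{cor:nocycle}. There is, however, a gap at the one substantive step. You leave the connecting edges undetermined, and both of your tentative guesses are wrong. You pattern-matched the 4231 case, where the two chains are joined start-to-start and end-to-end. In the 3412 case they are joined crosswise.

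To see this, note that the 3412 conditions of \Cref{thm:maxsingclassification} apply to the pair $(w_0w,w_0v)$. They are therefore inequalities on $w_0v$, and passing to $v$ reverses all values. This gives the index order $\beta_1<\cdots<\beta_{k-1}<\alpha_1<\beta_k<\alpha_2<\cdots<\alpha_m$ and the value order
\[
v(\alpha_1)<\cdots<v(\alpha_{m-1})<v(\beta_1)<v(\alpha_m)<v(\beta_2)<\cdots<v(\beta_k).
\]
Your candidates all fail against these orders:
\begin{itemize}
\item $(\beta_1,\alpha_1)$ is an inversion of $v$, so $vs_{\beta_1\alpha_1}\prec v$ and it is not in $R(v,w)$.
\item $(\alpha_1,\beta_1)$ is not an admissible pair, since $\alpha_1>\beta_1$.
\item The only pair on $\alpha_m,\beta_k$ is $(\beta_k,\alpha_m)$, and it is also an inversion.
\end{itemize}

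In fact, the only ascents of $v$ between an $\alpha$-row and a $\beta$-row are $(\alpha_1,\beta_k)$ and $(\beta_1,\alpha_m)$, which are exactly the connecting pairs in the paper. Neither rectangle contains an $\alpha$- or $\beta$-dot:
\begin{itemize}
\item No $\alpha$- or $\beta$-row lies strictly between $\alpha_1$ and $\beta_k$.
\item Between $\beta_1$ and $\alpha_m$, every $\alpha_i$ with $i<m$ has $v(\alpha_i)<v(\beta_1)$, and every $\beta_j$ with $j\ge 2$ has $v(\beta_j)>v(\alpha_m)$.
\end{itemize}
Both rectangles also lie in the dot-free shaded region of \Cref{fig:flipped3412withshading}. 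So they are shaded, hence in $R(v,w)$ by \Cref{lem:visuallydetermineR}, and they are minimal. The undirected cycle is then $\alpha_1,\alpha_2,\dots,\alpha_m,\beta_1,\beta_2,\dots,\beta_k,\alpha_1$. With this correction your argument goes through exactly as in the paper.

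A smaller slip, likely the source of the mix-up: the filled dots of $\Br(v,w)$ come from the unfilled dots of $\Br(w_0w,w_0v)$. Those are the points of $w_0v$, not $w_0w$.
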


\begin{proof}
    If $w_0w\in \maxsing(w_0v)$ in the 3412 case, the Bruhat picture $\Br(v,w)$ is as in \Cref{fig:flipped3412withshading}, by applying \Cref{rem:bruhatflipping} to \Cref{fig:3412withshading}. This lets us determine
    \begin{align*}
        R(v,w)&=\{(\beta_i,\beta_{i+1}):1\leq i\leq k-1\}\\
        &\cup \{(\alpha_i,\alpha_{i+1}):1\leq i\leq m-1\}\cup \{(\alpha_1,\beta_k),(\beta_1,\alpha_m)\},
    \end{align*}
    and so by the same argument we made in \Cref{lem:4231maxsing}, there is a cycle in $\overline{\mcG_1(v,w)}$.
\end{proof}

    \begin{figure}[h]
    \begin{center}
        \includegraphics[width=8cm]{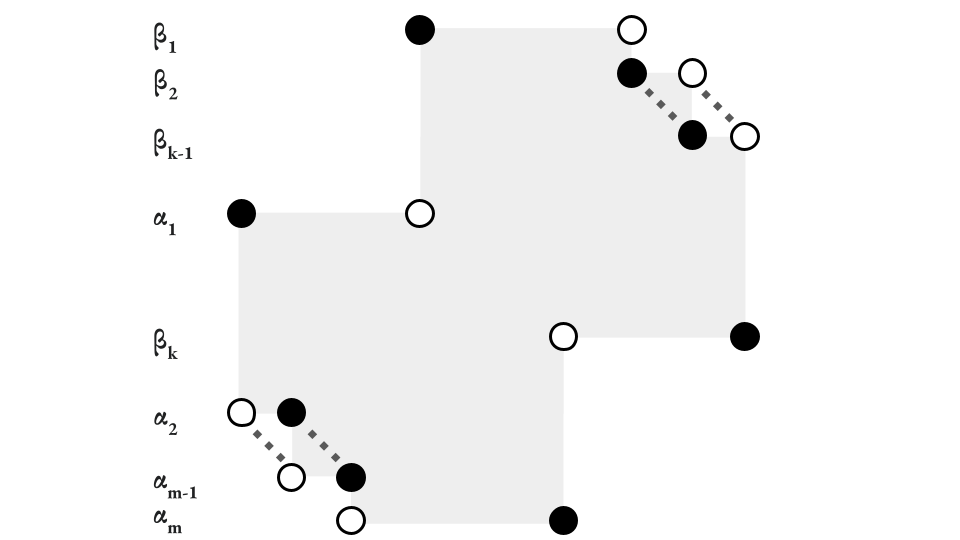}
    \end{center}
    \caption{The Bruhat picture $\Br(v,w)$ with $w_0w\in \maxsing(w_0v)$ in the 3412 Case. On the shaded region, $d_{v,w}\equiv 1$, and outside of the shaded region, $d_{v,w}\equiv 0$. The shaded region contains no filled or unfilled points.}
    \label{fig:flipped3412withshading}
    \end{figure}

\subsection{Maximal Singular Points in the 45312 Case}\label{sec:maxsing5}

In this subsection, we address the maximal singular points in the 45312 Case. 

We first recall some results from \cite{billey2001maximalsingularlocischubert}.

\begin{definition}
    For $w\in S_n$ and $Z=\{z_1,\dots,z_k\}\subseteq \{1,\dots,n\}$ such that $z_1< \dots< z_k$, we denote by $\fl_Z(w)\in S_{|Z|}$ the flattened permutation such that $\fl_Z(w)(i)< \fl_Z(w)(j)$ if and only if $w(z_i)<w(z_j)$. Moreover, for each pair $v\preceq w$, we define $\overline{v}^w=\fl_{E_0(v,w)}(v)$ and $\overline{w}^v=\fl_{E_0(v,w)}(w)$.
\end{definition}

For instance, if $w=[2,4,5,3,1]$ and $Z=\{2,3,4\}$, then $\fl_Z(w)=[2,3,1]$.

The following lemma says that all relevant properties of the pair $(v, w)$ are shared by the flattened pair $(\overline v^w,\overline w^v)$.

\begin{lemma}[{\cite[Lemma 15, Proposition 16]{billey2001maximalsingularlocischubert}}]\label{lem:preservesmatrix}
    We have $\overline v\preceq \overline w$ if and only if $v\preceq w$. Also, there exists a bijection between $R(v,w)$ and $R(\overline v^w,\overline w^v)$ which preserves $\trianglelefteq$. In particular, up to indexing of rows and columns, $M^{v,w}(x)$ is the same as $M^{\overline v^w,\overline w^v}(x)$. 
\end{lemma}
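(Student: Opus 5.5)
The approach is to show that deleting the positions outside $Z=\{i:(i,i)\in E_0(v,w)\}$ only removes \emph{common} dots of $\Br(v,w)$, i.e.\ dots drawn as $\bigcircledbullet$. Removing a common dot does not change the sign pattern of $d_{v,w}$. All three claims then reduce to \Cref{lem:visuallydetermineR} and the definitions of $R$, $\trianglelefteq$, $S$ and $M$.

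\emph{Step 1 (positions outside $Z$ are fixed).} I will show that if $v(i)\neq w(i)$ then $i\in Z$.

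The cleanest route I see is to argue directly with $d_{v,w}$. Suppose $v(i)\neq w(i)$. Compare $r_v$ and $r_w$ at points adjacent to $\pt_v(i)$: the difference $d_{v,w}$ jumps across the row and column of $\pt_v(i)$, and since $d_{v,w}\ge 0$ everywhere, one of the open quadrants at $\pt_v(i)$ must meet the shaded region. Now move from $\pt_v(i)$ into that shaded region and pick a filled dot $\pt_v(j)$ so that the rectangle spanned by $\pt_v(i)$ and $\pt_v(j)$ is entirely shaded. I would take a dot closest to $\pt_v(i)$, in the sense that the rectangle contains no further filled dot. By \Cref{lem:visuallydetermineR}(2), this gives $(i,j)$ or $(j,i)$ in $R(v,w)$, so $i\in Z$.

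As a fallback, I would use the Tsukerman--Williams description of Bruhat interval polytopes: the edge directions at $v$ are indexed by $R(v,w)$, and $w-v$ lies in their span. Hence every coordinate where $v$ and $w$ differ is covered by some pair in $R(v,w)$.

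I expect this step to be the main obstacle, because it requires a careful local analysis of where the shading of $d_{v,w}$ can sit relative to a non-common dot.

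\emph{Step 2 (deleting a common dot preserves $d$).} Let $i\notin Z$, so $v(i)=w(i)$. Crossing row $i$ or column $v(i)$ changes $r_v$ and $r_w$ by the same amount, so $d_{v,w}$ is unchanged. It follows that the values of $d_{\overline v^w,\overline w^v}$ on the flattened grid are exactly the values of $d_{v,w}$ on the corresponding cells of the original grid.

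By \Cref{lem:visuallydetermineR}(1), this gives $\overline v\preceq\overline w$ if and only if $v\preceq w$. Flattening preserves the relative order of both the positions and the values in $Z$. Hence $v(i)<v(j)$ is preserved, and a rectangle $\mcA_{ij}(v)$ with $i,j\in Z$ is fully shaded exactly when its image is fully shaded. Since $R(v,w)$ only involves indices in $Z$, \Cref{lem:visuallydetermineR}(2) gives the bijection $R(v,w)\to R(\overline v^w,\overline w^v)$. Containment of rectangles is a statement about the relative order of coordinates, so the bijection preserves $\trianglelefteq$.

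\emph{Step 3 (the matrix).} A column $(r,s)\in S(v)$ of $M^{v,w}(x)$ can have a nonzero entry only if either $r=a\in Z$ and $(s,b)\in R(v,w)$, or $s=b\in Z$ and $(a,r)\in R(v,w)$. In both cases $r,s\in Z$. So every column with an index outside $Z$ is identically zero, and deleting such columns does not change the rank.

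The remaining columns are indexed by pairs $(r,s)\in S(v)$ with $r,s\in Z$. These correspond bijectively to $S(\overline v^w)$, again because flattening preserves relative order. Under this correspondence, together with the bijection from Step 2 on rows, the entry formula for $M$ transforms verbatim. Hence $M^{v,w}(x)$ agrees with $M^{\overline v^w,\overline w^v}(x)$ up to indexing of rows and columns, and up to the zero columns that were deleted.
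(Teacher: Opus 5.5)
Your Steps 2 and 3 are correct. The paper gives no argument of its own here; it only cites Billey--Warrington. Deleting an index $i_0$ with $v(i_0)=w(i_0)=c$ removes a row and a column of $d_{v,w}$ that duplicate their neighbours: $d_{v,w}(i_0,\cdot)=d_{v,w}(i_0-1,\cdot)$ and $d_{v,w}(\cdot,c)=d_{v,w}(\cdot,c+1)$. So comparability, shading of rectangles between surviving indices, $\trianglelefteq$, and the nonzero columns of $M$ all transfer. Your caveat about the extra zero columns of $M^{v,w}(x)$ is more accurate than the statement itself.

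\textbf{The gap is in Step 1.} This is the only part that is not bookkeeping, and your primary argument does not prove it. The local analysis at $\pt_v(i)$ gives only one shaded cell next to $\pt_v(i)$ in one of the two increasing directions. Indeed, the mixed second difference of $d_{v,w}$ there is $[w(i)=v(i)]-1=-1$, which forces one of the two corner cells to be positive. It does not give a fully shaded rectangle. Choosing ``a dot closest to $\pt_v(i)$'' does not single one out, because a quadrant usually contains several dots whose rectangles are empty, and these need not be shaded. For example, take $v=[1,3,2]$, $w=[2,3,1]$, $i=1$. Both $\pt_v(2)$ and $\pt_v(3)$ are closest in your sense, yet $(1,2)\notin R(v,w)$, since $vs_{12}=[3,1,2]\not\preceq w$. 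Only $(1,3)$ works. The existence of a shaded rectangle at $\pt_v(i)$ is exactly what must be shown.

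\textbf{Fixing the fallback.} It can close the gap, but as stated it is inaccurate: not every element of $R(v,w)$ gives an edge at $v$. The correct version is as follows.
\begin{itemize}
\item Tsukerman--Williams show that Bruhat interval polytopes are generalized permutohedra.
\item Hence each edge of $Q_{v,w}$ at its minimal vertex $v$ joins $v$ to some $vs_{ij}$ with $(i,j)\in R(v,w)$.
\item Since $Q_{v,w}$ lies in its tangent cone at $v$, $w-v$ is a nonnegative combination of these edge vectors $(v(j)-v(i))(e_i-e_j)$.
\item So every $i$ with $v(i)\neq w(i)$ appears in some pair.
\end{itemize}

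\textbf{An elementary alternative using only your Step 2.} Induct on $\ell(w)-\ell(v)$, and pick an atom $u=vs_{kl}$ of $[v,w]$. If $i\in\{k,l\}$ you are done. Otherwise $u(i)=v(i)\neq w(i)$, so induction gives $(p,q)\in R(u,w)$ with $i\in\{p,q\}$.
\begin{itemize}
\item If $\{p,q\}\cap\{k,l\}=\emptyset$, then $v\prec vs_{pq}\prec vs_{pq}s_{kl}=us_{pq}\preceq w$, so $(p,q)\in R(v,w)$.
\item Otherwise $y=us_{pq}$ differs from $v$ in exactly the three positions $\{i,k,l\}$, and $v\prec y\preceq w$. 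Flatten $(v,y)$ to these positions using Step 2. The result is one of the four pairs $x\prec y'$ in $S_3$ that differ in every position: $([1,2,3],[2,3,1])$, $([1,2,3],[3,1,2])$, $([2,1,3],[3,2,1])$, or $([1,3,2],[3,2,1])$.
\item A direct check shows that in each of these, every position lies in some pair of $R(x,y')$.
\item By Step 2 that pair lifts to a pair of $R(v,y)\subseteq R(v,w)$ containing $i$.
\end{itemize}
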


\begin{lemma}\label{lem:45312maximal}
    Let $v\preceq w$ such that $w_0w\in \maxsing(w_0v)$ in the 45312 Case. Then $M^{v,w}(x)$ does not have full rank for any $x\in N^\vee_{X(w),e_v}C(v)$.
\end{lemma}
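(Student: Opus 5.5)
The plan is to follow the template of \Cref{lem:4231maxsing} and \Cref{lem:3412maxsing}. I will read off $R(v,w)$ from the flipped picture, exhibit a cycle in $\overline{\mcG_1(v,w)}$, and conclude by \Cref{cor:nocycle}. The new feature is the chain of concentric circles, and I will first use \Cref{lem:preservesmatrix} to discard everything irrelevant.

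First, apply \Cref{rem:bruhatflipping} to \Cref{fig:45312withshading} to obtain the form of $\Br(v,w)$. The four indices $\beta_1<\alpha_1<\beta_2<\alpha_2$ carry the non-concentric dots. The shaded region of $\Br(v,w)$ (where $d_{v,w}\equiv 1$) contains no dots except a chain of concentric circles $\bigcircledbullet$ at indices $\gamma_1<\dots<\gamma_t$. After the flip, these lie in order running from bottom left to top right, i.e.\ $v(\gamma_1)<\dots<v(\gamma_t)$. By \Cref{lem:preservesmatrix}, I may replace $(v,w)$ by the flattened pair $(\overline v^w,\overline w^v)$ without changing $M^{v,w}(x)$ or $\trianglelefteq$. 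So I will assume every index lies in $E_0(v,w)$, which leaves only the four $\alpha,\beta$ indices and those $\gamma_s$ that actually occur in $R(v,w)$.

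Second, compute $R(v,w)$ via \Cref{lem:visuallydetermineR}(2): $(i,j)\in R(v,w)$ exactly when $v(i)<v(j)$ and the box $\mcA_{ij}(v)$ lies inside the shaded region. I expect $R(v,w)$ to consist of the following pairs:
\begin{itemize}
\item the two ``outer'' pairs among the $\alpha,\beta$ indices, analogous to those in the 3412 Case;
\item the pairs joining the first filled dot of the $\alpha,\beta$ configuration to $\gamma_1$;
\item the consecutive chain pairs $(\gamma_s,\gamma_{s+1})$;
\item the pair joining $\gamma_t$ to the last filled dot.
\end{itemize}
Some non-minimal boxes will contain a $\gamma_s$, and these are exactly the elements of $R(v,w)$ that fail to be $\trianglelefteq$-minimal. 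The minimal ones, $E_1(v,w)$, are those boxes containing no filled dot. The chain edges $(\gamma_s,\gamma_{s+1})$, together with the edges entering $\gamma_1$ and leaving $\gamma_t$, then form a path through the middle. Two edges among the $\alpha,\beta$ vertices (an outer path $\beta_1\to\alpha_1$-type edge and a $\beta_2\to\alpha_2$-type edge, or the two edges from the endpoints of the chain to the pair $\alpha_1,\beta_2$) close this path into a cycle in $\overline{\mcG_1(v,w)}$. I will sanity-check this on the smallest instance $n=5$, where $w_0v=[4,5,3,1,2]$ and the single concentric point sits at index $3$. I will also check the degenerate case where the middle chain is empty, which in flattened form reduces to the 3412-type cycle already obtained in \Cref{lem:3412maxsing}.

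The main obstacle is the second step: verifying precisely which boxes are shaded and which are minimal when concentric circles are present. A concentric point has $v(\gamma)=w(\gamma)$, so it shifts $r_v$ and $r_w$ equally and leaves $d_{v,w}$ unchanged, but it does break minimality of any box containing it. My first attempt will be an induction on $t$, removing $\gamma_t$ by flattening and tracking how the cycle is rerouted through the new vertex. If it turns out that $\mcG_1$ is acyclic in some subcase, the fallback is to work with $M^{v,w}(x)$ directly. Since $|R(v,w)|$ is small and explicit, I would exhibit a nonzero left-kernel vector: a signed combination of the rows indexed by the cycle-like set of pairs, with coefficients given by products of the $x$-entries, chosen so that all column contributions cancel, including those from the off-diagonal columns $(r,s)\in S(v)$.
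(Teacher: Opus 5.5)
\textbf{There is a genuine gap: the cycle you plan to exhibit in $\overline{\mcG_1(v,w)}$ does not exist.} In a genuine 45312 configuration the graph $\overline{\mcG_1(v,w)}$ is a tree, so \Cref{cor:nocycle} cannot be invoked. (Genuine means at least one concentric circle; with none, the picture is just the $k=m=2$ instance of the 3412 Case.) Your own sanity check shows this. For $w_0v=[4,5,3,1,2]$ and $w_0w=[1,4,3,2,5]$ one gets $v=[2,1,3,5,4]$ and $w=[5,2,3,4,1]$, with
\[
R(v,w)=\{(1,3),(1,4),(1,5),(2,3),(2,4),(2,5),(3,4),(3,5)\}.
\]
Here $E_1(v,w)=\{(1,3),(2,3),(3,4),(3,5)\}$, which is a star centred at the concentric index $3$.

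In general the flattened permutation is $\overline v^w=[2,1,3,\dots,t+2,t+4,t+3]$, and $R$ consists of all $\binom{t+4}{2}-2$ pairs $(i,j)$ with $\overline v^w(i)<\overline v^w(j)$. So your description of $R(v,w)$ is also off: all four pairs $(\beta_1,\beta_2),(\beta_1,\alpha_2),(\alpha_1,\beta_2),(\alpha_1,\alpha_2)$ lie in $R$. So do all pairs from $\beta_1,\alpha_1$ to each $\mu_i$, all pairs from each $\mu_i$ to $\beta_2,\alpha_2$, and all $(\mu_i,\mu_j)$. The set $E_1$ is the caterpillar $\beta_1,\alpha_1\to\mu_1\to\cdots\to\mu_t\to\beta_2,\alpha_2$, with $t+3$ edges on $t+4$ vertices. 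There are no edges among the $\alpha,\beta$ vertices, because the four $\alpha\beta$ boxes contain every circle. Those four pairs lie in $E_{t+1}(v,w)$, and the $4$-cycle appears only in $\mcG_{t+1}(v,w)$.

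A cycle in a higher graph does not by itself give rank deficiency. The block-triangular form only bounds the rank from below. Moreover, for $i\ge 2$ the block $D_i$ is not a row rescaling of an incidence matrix, since row $(a,b)$ carries the different coordinates $x_{v(s)v(b)}$ and $-x_{v(a)v(r)}$. In the paper, the implication ``cycle in some $\mcG_i$ with $i>1$ implies no open orbit'' is derived from this very lemma, not the other way round.

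\textbf{Your fallback is therefore the whole proof, and it is not supplied.} An explicit left-kernel vector of $M^{v,w}(x)$ is not a small computation: $|R(v,w)|$ grows quadratically in $t$, and the rows $(\mu_i,\mu_j)$ interact with columns such as $(\beta_1,\mu_i)$ and $(\mu_i,\beta_2)$. For $t=1$ it can be done by hand. Write $y_{ij}=x_{v(i)v(j)}$ and take the rows $(\beta_1,\beta_2),(\beta_1,\alpha_2),(\alpha_1,\beta_2),(\alpha_1,\alpha_2)$ with coefficients $y_{\alpha_1\mu}y_{\mu\alpha_2}$, $-y_{\alpha_1\mu}y_{\mu\beta_2}$, $-y_{\beta_1\mu}y_{\mu\alpha_2}$, $y_{\beta_1\mu}y_{\mu\beta_2}$. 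These cancel every off-diagonal column, and the $E_1$-rows then absorb the diagonal columns after clearing denominators. Nothing you wrote extends this to general $t$.

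Your induction on $t$ is also not legitimate. Since $\gamma_t\in E_0(v,w)$, \Cref{lem:preservesmatrix} does not let you delete it, and deleting it changes $R(v,w)$ and the matrix.

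\textbf{The missing idea is the paper's reduction.} After flattening, $\Br(\overline v^w,\overline w^v)$ and $\Br(\overline v^w,w_0)$ have the same shaded region, so $R(\overline v^w,\overline w^v)=R(\overline v^w,w_0)$ and the two matrices coincide. Since $w_0$ avoids $231$, the Lapid--M\'inguez theorem applies to the pair $(\overline v^w,w_0)$. Finally, $X(w_0\overline v^w)$ is singular at $e_{w_0\overline w^v}$, hence at the base point, so $M^{\overline v^w,w_0}(x)$ never has full rank.
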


\begin{proof}
    Using the Bruhat picture from \Cref{fig:45312withshading} and \Cref{rem:bruhatflipping}, we see that $\Br(v,w)$ is given by \Cref{fig:flipped45312withshading}.

    % \begin{figure}[h]
    % \begin{center}
    %     \includegraphics[width=8cm]{Images/Flipped45312WithoutShading.png}
    % \end{center}
    % \caption{}
    % \label{fig:flipped45312withoutshading}
    % \end{figure}

    \begin{figure}[h]
    \begin{center}
        \includegraphics[width=8cm]{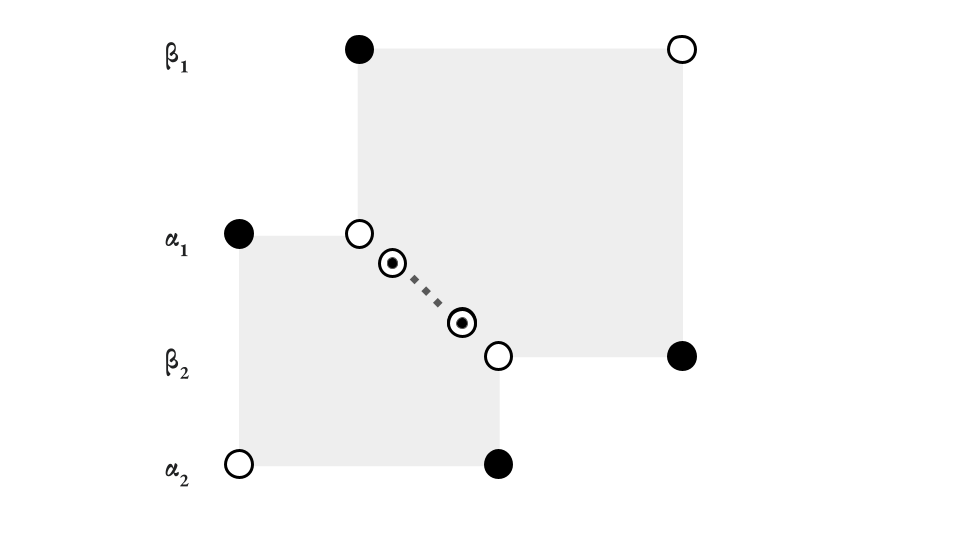}
    \end{center}
    \caption{The Bruhat picture $\Br(v,w)$ with $w_0w\in \maxsing(w_0v)$ in the 45312 Case. On the shaded region, $d_{v,w}\equiv 1$, and outside of the shaded region, $d_{v,w}\equiv 0$. The shaded region contains no filled or unfilled points other than the indicated concentric circles.}
    \label{fig:flipped45312withshading}
    \end{figure}

    Recalling that the shaded region of \Cref{fig:45312withshading} contains no points besides the concentric circles depicted, the shaded region of \Cref{fig:flipped45312withshading} also contains no points, with the exception of possible concentric circles in $\mcA_{\alpha_1,\beta_2}(w)$. Let $M$ be the set of $\mu\in (\alpha_1,\beta_2)$ such that $\pt_v(\mu)=\pt_w(\mu)$ is one of said concentric circles, i.e.
    \begin{align*}
        M&=\{\mu \in (\alpha_1,\beta_2):v(\alpha_1) < v(\mu) = w(\mu) < v(\beta_2)\}.
    \end{align*}
    Enumerate $M=\{\mu_1,\dots,\mu_n\}$ such that $\mu_1<\dots<\mu_n$. We have
    \begin{align*}
        R(v,w)&=\{(\beta_1,\mu_i):\mu_i\in M\}\cup \{(\alpha_1,\mu_i):\mu_i\in M\}\\
        &\quad \cup \{(\mu_i,\beta_2):\mu_i\in M\}\cup \{(\mu_i,\alpha_2):\mu_i\in M\}\\
        &\quad \cup\{(\mu_i,\mu_j):i< j\}\cup \{(\beta_1,\beta_2),(\beta_1,\alpha_2),(\alpha_1,\beta_2),(\alpha_1,\alpha_2)\},
    \end{align*}
    so
    \begin{align*}
        E_0(v,w)=\{\beta_1,\alpha_1,\beta_2,\alpha_2\}\cup M.
    \end{align*}
    Thus $\Br(\overline v,\overline w)$ is depicted in \Cref{fig:overlinevoverlinew} by \Cref{lem:preservesmatrix}. Furthermore, the Bruhat picture $\Br(\overline v^w,w_0)$ of flattened permutations in $S_{|M|+4}$ is as in \Cref{fig:overlinevw_0} (strictly speaking, what is depicted is the case where $|M|$ is odd, but the difference is immaterial). 

    \begin{figure}[ht]
        \centering
        \begin{subfigure}[t]{0.47\textwidth}
            \centering
            \includegraphics[width=\linewidth]{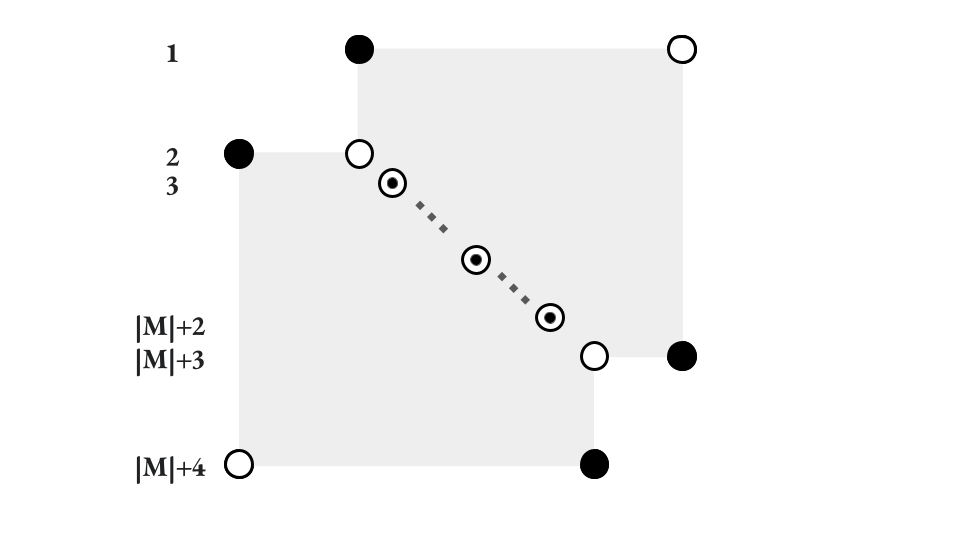}
            \caption{The Bruhat picture $\Br(\overline v^w,\overline w^v)$ with $w_0w\in \maxsing(w_0v)$ in the 45312 Case. On the shaded region, $d_{\overline v^w,\overline w^v}\equiv 1$, and outside of the shaded region, $d_{\overline v^w,\overline w^v}\equiv 0$.}
            \label{fig:overlinevoverlinew}
        \end{subfigure}
        \hfill
        \begin{subfigure}[t]{0.47\textwidth}
            \centering
            \includegraphics[width=\linewidth]{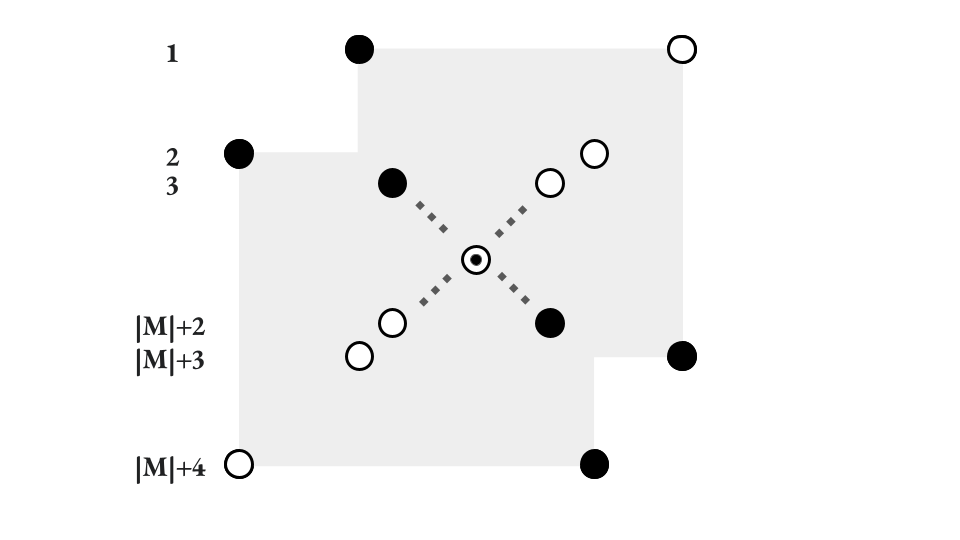}
            \caption{The Bruhat picture $\Br(\overline v^w,w_0)$ with $w_0w\in \maxsing(w_0v)$ in the 45312 Case. On the shaded region, $d_{\overline v^w,w_0}\geq 1$, and outside of the shaded region, $d_{\overline v^w,w_0}\equiv 0$. By the dotted lines going from the top left to bottom right (resp. top right to bottom left), we mean that the filled dots continue down and to the right (resp. down and to the left) in order.}
            \label{fig:overlinevw_0}
        \end{subfigure}
        \caption{}
        \label{fig:overlineconfigurations}
    \end{figure}

    We observe that \Cref{fig:overlinevoverlinew} and \Cref{fig:overlinevw_0} have the same shaded regions. Thus, $R(\overline v^w,\overline w^v)=R(\overline v^w,w_0)$, so $M^{\overline v^w,\overline w^v}(x)$ and $M^{\overline v^w,w_0}(x)$ are in fact the same matrix. Recall {\cite[Theorem 1.2]{LapidMinguez2025}}, which says that if $v\preceq w$ and $w$ is $231$-avoiding, then $X(w_0v)$ is smooth at $e_{w_0w}$ if and only if there is an open $B$-orbit in $N^\vee_{X(w)}C(v)$. Strictly speaking, it is stated in the setting of $G$-orbits in $G/B\times G/B$, but the setting of $B$-orbits in $G/B$ is equivalent. The permutation $w_0$ is $231$-avoiding, and $X(w_0\overline v^w)$ is singular at $e_{w_0\overline w^v}$ by \Cref{rem:bruhatflipping} and \Cref{thm:maxsingclassification}. Thus, the matrices $M^{\overline v^w,w_0}(x)=M^{\overline v^w,\overline w^v}(x)$ are the same for all $x\in N^\vee_{X(w),e_v}C(v)$ and are not of full rank. Finally, by \Cref{lem:preservesmatrix}, $M^{v,w}(x)$ is not of full rank for all $x\in N^\vee_{X(w),e_v}C(v)$.
\end{proof}

\begin{corollary}\label{cor:maxsingimpliesnoopen}
    If $v\preceq w$ and $w_0w\in \maxsing(w_0v)$, then the action of $B_v$ on the conormal space $N^\vee _{X(w),e_v}C(v)$ does not admit an open orbit. 
\end{corollary}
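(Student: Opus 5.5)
The corollary should follow by combining the case analysis of this section with the linear-algebraic criterion of \Cref{thm:matrixcharacterization}. Suppose $v\preceq w$ with $w_0w\in\maxsing(w_0v)$.

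\Cref{thm:maxsingclassification}, applied to the Schubert variety $X(w_0v)$ and the point $w_0w$, tells us that $w_0w=(w_0v)c$ for a cycle $c$ of the prescribed form. Moreover, exactly one of the 4231, 3412, or 45312 Cases holds.

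In each case one of the earlier lemmas applies:
\begin{itemize}
\item[(1)] in the 4231 Case, \Cref{lem:4231maxsing};
\item[(2)] in the 3412 Case, \Cref{lem:3412maxsing};
\item[(3)] in the 45312 Case, \Cref{lem:45312maximal}.
\end{itemize}
Each of these gives that $\rank M^{v,w}(x)<|R(v,w)|$ for every $x\in N^\vee_{X(w),e_v}C(v)$. So in all cases no $x$ in the conormal space makes $M^{v,w}(x)$ of full row rank.

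The conclusion then comes from the equivalence established in the proof of \Cref{thm:matrixcharacterization}. For any $x$, the image $[\mathfrak b_v,x]$ is the column space of $M^{v,w}(x)$, read in the basis $\{e_{v(a)v(b)}:(a,b)\in R(v,w)\}$. Hence $[\mathfrak b_v,x]$ is a proper subspace of $N^\vee_{X(w),e_v}C(v)$ for every $x$. This means the tangent space to each $B_v$-orbit is a proper subspace, so every orbit has dimension strictly less than $\dim N^\vee_{X(w),e_v}C(v)$. Therefore $B_v$ has no open orbit on the conormal space. This is the desired statement; equivalently, by \Cref{thm:matrixcharacterization}, $B$ has no open orbit on $N^\vee_{X(w)}C(v)$.

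There is no real obstacle, since the substantive work sits in the three lemmas. The one point to check is that the three cases of \Cref{thm:maxsingclassification} exhaust $\maxsing(w_0v)$. This holds because the theorem is an ``if and only if'' characterization, so the case split is complete.
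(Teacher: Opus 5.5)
Your proposal is correct and takes the same approach as the paper. The paper's proof likewise uses \Cref{thm:maxsingclassification} to split into the 4231, 3412, and 45312 Cases, and handles them with \Cref{lem:4231maxsing}, \Cref{lem:3412maxsing}, and \Cref{lem:45312maximal}. Your explicit step from ``$M^{v,w}(x)$ never has full rank'' to ``no open $B_v$-orbit,'' using that $[\mathfrak b_v,x]$ is the column space of $M^{v,w}(x)$ and hence the tangent space to the orbit, is one the paper leaves implicit, and it is sound.
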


\begin{proof}
    By \Cref{thm:maxsingclassification}, we have $w_0w\in \maxsing(w_0v)$ in the 4231 Case, the 3412 Case, or the 45312 Case. These are addressed in turn in \Cref{lem:4231maxsing}, \Cref{lem:3412maxsing}, and \Cref{lem:45312maximal}. 
\end{proof}

\begin{theorem}\label{thm:singularimpliesnoopenorbit}
    Let $v\preceq w$ such that $X(w_0v)$ is singular at $e_{w_0w}$. Then there is no open $B_v$-orbit in $N^\vee _{X(w),e_v}C(v)$.
\end{theorem}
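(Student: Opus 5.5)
The idea is to reduce from an arbitrary singular point to a maximal singular point, and then transfer the rank deficiency of $M$ along the reduction. Suppose $X(w_0v)$ is singular at $e_{w_0w}$. The singular locus of $X(w_0v)$ is closed and $B$-stable, so it is a union of Schubert varieties $X(u)$ with $u\in\maxsing(w_0v)$. Hence there is some $u\in \maxsing(w_0v)$ with $w_0w\preceq u\preceq w_0v$. Write $u=w_0w'$. Since $y\mapsto w_0y$ reverses Bruhat order, we get $v\preceq w'\preceq w$, and by construction $w_0w'\in\maxsing(w_0v)$. Now \Cref{cor:maxsingimpliesnoopen} applies to the pair $(v,w')$. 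It says there is no open $B_v$-orbit in $N^\vee_{X(w'),e_v}C(v)$, that is, $M^{v,w'}(x')$ is rank-deficient for every $x'\in N^\vee_{X(w'),e_v}C(v)$.

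The remaining task is to pass from $w'$ to $w$. The plan is to show that, for $x\in N^\vee_{X(w),e_v}C(v)$, rank-deficiency of $M^{v,w'}$ forces rank-deficiency of $M^{v,w}(x)$. Since $w'\preceq w$, we have $R(v,w')\subseteq R(v,w)$. There is a natural $B_v$-equivariant quotient
\[
N^\vee_{X(w),e_v}C(v)\twoheadrightarrow N^\vee_{X(w'),e_v}C(v),
\]
obtained by restricting covectors from $T_{e_v}X(w)$ to $T_{e_v}X(w')$. In coordinates it sends $x$ to $x'$, where $x'_{v(a)v(b)}=x_{v(a)v(b)}$ for $(a,b)\in R(v,w')$ and all other coordinates are killed. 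I will check this directly from the definition of $M$. Restrict to the rows of $M^{v,w}(x)$ indexed by $R(v,w')$. An entry in row $(a,b)$ is $x_{v(s)v(b)}$ or $-x_{v(a)v(r)}$, and I need such an entry to vanish, or to agree with the corresponding entry of $M^{v,w'}(x')$, whenever the index pair lies outside $R(v,w')$.

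This is where care is needed. The cleaner route is the coordinate-free one. The image of $[\mathfrak b_v,x]$ under the equivariant surjection is $[\mathfrak b_v,x']$. So if $[\mathfrak b_v,x]$ equals all of $N^\vee_{X(w),e_v}C(v)$, then $[\mathfrak b_v,x']$ equals all of $N^\vee_{X(w'),e_v}C(v)$. By \Cref{thm:matrixcharacterization} that would give an open orbit for $(v,w')$, a contradiction. Equivalently, an open $B_v$-orbit in the source maps onto a dense, hence open, orbit in the target under the equivariant linear surjection.

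The main obstacle is justifying that this restriction map is well defined and $B_v$-equivariant. It requires the inclusions $T_{e_v}C(v)\subseteq T_{e_v}X(w')\subseteq T_{e_v}X(w)$ to be $B_v$-stable. These hold because $C(v)\subseteq X(w')\subseteq X(w)$ are $B$-stable subvarieties through $e_v$, so their tangent spaces at $e_v$ are $B_v$-submodules. Restriction of annihilating covectors then gives the surjection. Compatibility with the explicit bases $e_{v(i)v(j)}$ follows from the Lakshmibai--Seshadri description of the tangent spaces. Once this is in place, combining it with \Cref{cor:maxsingimpliesnoopen} finishes the proof.
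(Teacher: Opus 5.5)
Your proposal is correct and is essentially the paper's own proof. You reduce to some $w'$ with $v\preceq w'\preceq w$ and $w_0w'\in\maxsing(w_0v)$, invoke \Cref{cor:maxsingimpliesnoopen}, and transfer the conclusion along the $B_v$-equivariant linear surjection $N^\vee_{X(w),e_v}C(v)\twoheadrightarrow N^\vee_{X(w'),e_v}C(v)$. Your infinitesimal version of the last step (the image of $[\mathfrak b_v,x]$ is $[\mathfrak b_v,x']$) is equivalent to the paper's observation that this projection is an open map and so sends an open orbit to an open orbit.
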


\begin{proof}
    Let $v\preceq w'\preceq w$. Then the natural projection $N^\vee_{X(w),e_v}C(v)\twoheadrightarrow N^\vee_{X(w'),e_v}C(v)$ is surjective, linear, and $B_v$-equivariant. Thus, because both the domain and codomain of this map are finite-dimensional, the map is a projection up to a change of coordinates, and is thus an open map. Therefore, if there is an open $B_v$-orbit in $N^\vee_{X(w),e_v}C(v)$, then there is also an open orbit in $N^\vee_{X(w'),e_v}C(v)$.  Equivalently, if there is no open $B_v$-orbit in $N^\vee _{X(w'),e_v}C(v)$, there is no open $B_v$-orbit in $N^\vee _{X(w),e_v}C(v)$. For every $v\preceq w$ such that $X(w_0v)$ is singular at $e_{w_0w}$, there exists some $w'\preceq w$ such that $w_0w'\in \maxsing(w_0v)$. By \Cref{cor:maxsingimpliesnoopen}, there is no open $B_v$-orbit in $N^\vee _{X(w'),e_v}C(v)$, so there is no open $B_v$-orbit in $N^\vee _{X(w),e_v}C(v)$.
\end{proof}

\section{A Necessary Condition for Non-Existence of an Open $B_v$-orbit}\label{sec:necessarycondition}

\subsection{Defining the Graphs}

In this subsection, we recursively define graphs analogous to $\mcG_1(v,w)$ which capture the remaining information of $R(v,w)$. The idea is to produce a filtration on $R(v,w)$ based on how many elements are less than a fixed element with respect to $\trianglelefteq$. Recall our construction of $E_0(v,w)$ and $E_1(v,w)$ in \Cref{sec:maximalsingular4}. 

\begin{definition}
    Given $E_i(v,w)\subseteq R(v,w)$, we define $E_{i+1}(v,w)$ as the elements of
    \begin{align*}
        R(v,w)\setminus \left(\bigcup_{1\leq j\leq i}E_j(v,w)\right)
    \end{align*}
    which are minimal with respect to $\trianglelefteq$.
\end{definition}

For instance, in \Cref{fig:examplepairwithshading}, we have
\begin{align*}
    E_0(v,w)&=\{(1,1),(2,2),(4,4),(5,5)\},\\
    E_1(v,w)&=\{(1,2),(2,4),(4,5)\},\\
    \intertext{and}
    E_2(v,w)&=\{(2,5)\}.
\end{align*}
% Thus $\mcG_1(v,w)$ is the directed path $(1,2,4,5)$, and $\mcG_2(v,w)$ is the directed edge $(2,5)$ between $(2,4)$ and $(4,5)$.
These are visualized in \Cref{fig:exampleei}, in which the red arrows denote elements of $E_1(v,w)$ and the blue arrow is the element of $E_2(v,w)$.

\begin{figure}[h]
\begin{center}
    \includegraphics[width=8cm]{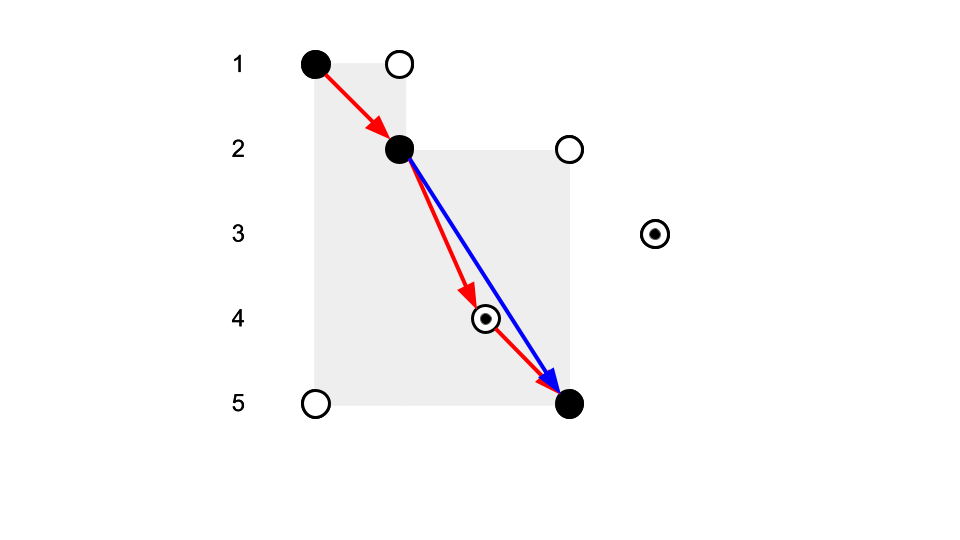}
\end{center}
\caption{The Bruhat picture $\Br(v,w)$ with $v=[1,2,5,3,4]$ and $w=[2,4,5,3,1]$. On the shaded region, $d_{v,w}\equiv 1$, and outside of the shaded region, $d_{v,w}\equiv 0$. For each $(i,j)$ in $E_1(v,w)$, there is a red arrow from $\pt_i(v)$ to $\pt_j(v)$, and for each $(i,j)$ in $E_2(v,w)$, there is a blue arrow from $\pt_i(v)$ to $\pt_j(v)$.}
\label{fig:exampleei}
\end{figure}

Whenever $j> i$, we have $M^{v,w}(x)_{(a,b),(r,s)} = 0$ for any $(a,b) \in E_i(v,w)$ and $(r,s) \in E_j(v,w)$. Thus, after removing all columns which are identically zero and permuting rows and columns, we can write $M^{v,w}(x)$ as a block matrix
\begin{align*}
    M^{v,w}(x)=\begin{bmatrix}
        D_1\\
        *&D_2\\
        *&*&D_3\\
        \vdots&\vdots&\vdots &\ddots\\
        *&*&*&\cdots &D_m
    \end{bmatrix}
\end{align*}
where the rows of $D_i$ are indexed by $E_i(v,w)$ and the columns of $D_i$ are indexed by $E_{i-1}(v,w)$. Here, $m$ is the greatest value such that $E_m(v,w)$ is nonempty.

Thus, we have
\begin{align*}
\rank(M^{v,w}(x))\geq \sum_{1\leq i\leq m}\rank(D_i).
\end{align*}

\begin{lemma}\label{lem:uniquepath}
    If $\overline{\mcG_1(v,w)}$ is a forest, then for every $(a,b)\in E_i(v,w)$, there are exactly $i-1$ values $k$ such that $\pt_v(k)\in \mcA_{ab}(v)$. Further, for every $m_1<m_2$ such that $\pt_v(k_1),\pt_v(k_2)\in \mcA_{ab}(v)$, we have $v(k_1)<v(k_2)$.
\end{lemma}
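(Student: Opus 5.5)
We argue by induction on $i$, proving both assertions simultaneously. The base case $i=1$ is immediate from the definitions: $(a,b)\in E_1(v,w)$ means exactly that $\mcA_{ab}(v)$ contains no filled dots, so there are $0$ such $k$, and the monotonicity claim is vacuous.

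For the inductive step, fix $(a,b)\in E_i(v,w)$ with $i\geq 2$ and let $k_1,\dots,k_t$ be the indices with $\pt_v(k)\in\mcA_{ab}(v)$. The first structural fact we need is that each such $k$ lies in $E_0(v,w)$ and in fact $(a,k),(k,b)\in R(v,w)$ whenever $v(a)<v(k)<v(b)$. This follows from \Cref{lem:visuallydetermineR}: $\mcA_{ak}(v)$ and $\mcA_{kb}(v)$ are contained in $\mcA_{ab}(v)$, which is entirely shaded, so both rectangles are shaded and both pairs lie in $R(v,w)$. Consequently the rectangle $\mcA_{ab}(v)$ decomposes: any chain $a=k_0',k_1',\dots,k_s'=b$ of indices whose $v$-values increase, with consecutive rectangles empty of filled dots, gives a path of $E_1$-edges in $\mcG_1(v,w)$ from $(a,a)$ to $(b,b)$.

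The key step, and the main obstacle, is the second assertion: that the filled dots in $\mcA_{ab}(v)$ form an increasing chain. I would argue by contradiction. Suppose $k_1<k_2$ inside $\mcA_{ab}(v)$ with $v(k_1)>v(k_2)$. Then both $(a,k_1),(k_1,b),(a,k_2),(k_2,b)\in R(v,w)$, and by refining each of these along minimal rectangles (repeatedly inserting intermediate filled dots with increasing $v$-values, which again lie in shaded subrectangles), we obtain two directed paths of $E_1$-edges from $(a,a)$ to $(b,b)$, one through $(k_1,k_1)$ and one through $(k_2,k_2)$. Since $k_1,k_2$ are incomparable in the relevant order (neither can precede the other in an increasing chain), $(k_2,k_2)$ cannot lie on a path through $(k_1,k_1)$ chosen this way, so the two paths are distinct, and their union in $\overline{\mcG_1(v,w)}$ contains a cycle, contradicting the forest hypothesis. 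Some care is needed to ensure the refinement terminates and that the two paths genuinely differ; I would choose each path as a maximal increasing chain of filled dots through the given point, which makes consecutive rectangles dot-free (maximality) and makes the vertex sets differ.

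With the chain property in hand, the dots $k_1<\dots<k_t$ in $\mcA_{ab}(v)$ satisfy $v(k_1)<\dots<v(k_t)$, so the elements of $R(v,w)$ strictly below $(a,b)$ in $\trianglelefteq$ are exactly the pairs $(k_p,k_q)$ with endpoints among $a,k_1,\dots,k_t,b$ (excluding $(a,b)$ itself), and the longest chain below $(a,b)$ passes through $(a,k_t)$ or $(k_1,b)$, which contain $t-1$ dots. By induction the level of a pair whose rectangle contains $s$ increasing dots is $s+1$, so $(a,b)$, being minimal among what remains after removing levels $\leq t$, lies in $E_{t+1}(v,w)$; hence $t=i-1$. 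Formally I would phrase this as proving, by strong induction on the number $t$ of dots, that a pair with $t$ dots (necessarily increasing) lies in $E_{t+1}$, which gives the lemma since the $E_j$ are disjoint.
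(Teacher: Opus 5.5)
Your proof is correct. It rests on the same core idea as the paper: two distinct increasing $E_1$-paths from $a$ to $b$ would give a cycle in $\overline{\mcG_1(v,w)}$. You organize it differently, though.

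\textbf{The paper's route.} It runs a single induction on $i$. It takes the leftmost dot $p$ in $\mcA_{ab}(v)$ and, if some dot lies below and to the right of $p$, the leftmost such dot $q$. These extremal choices force $(a,p),(a,q)\in E_1(v,w)$. The inductive hypothesis, applied to $(p,b)$ and $(q,b)$ at lower levels, then supplies increasing $E_1$-paths from $p$ and $q$ to $b$, giving a cycle. Having ruled out $q$, the paper peels off $p$ and reads the remaining dots from $\mcA_{pb}(v)$.

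\textbf{Your route.} You separate the two assertions. Monotonicity is proved with no induction at all. The dominance poset on $\{a,b\}$ together with the dots of $\mcA_{ab}(v)$ has its Hasse diagram inside $\overline{\mcG_1(v,w)}$: every subrectangle is shaded, and any dot of a subrectangle is already a dot of $\mcA_{ab}(v)$. A bounded poset whose Hasse diagram is a forest must be a chain, since maximal chains through two incomparable elements give two distinct $a$--$b$ paths. The count then follows by strong induction on the number $t$ of dots, using the explicit description of everything below $(a,b)$ as pairs drawn from the chain $a,k_1,\dots,k_t,b$.

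\textbf{What each buys.} Your version makes the level bookkeeping rigorous. The paper justifies $(p,b)\in E_{i-1}(v,w)$ only by saying $(a,b)$ is minimal above $(p,b)$, which is terse, since a cover relation alone does not force levels to differ by exactly one. You pin the level down exactly. When you write it up, say explicitly that $(a,b)$ lies strictly above $(k_1,b)\in E_t(v,w)$, so it survives the first $t$ rounds, and lies above nothing outside $E_1\cup\dots\cup E_t$. Also state the induction on $t$ from the outset rather than on $i$. The paper's argument is more local, examining only one specific pair of offending dots, but it needs the induction to produce the paths.
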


\begin{proof}
    We induct on $i$. The $i=1$ is clear. Suppose $i\geq 2$, and assume the lemma for every $j<i$. Let $p$ be the minimum $k$ in $(a,b)$ such that $\pt_v(k)\in \mcA_{ab}(v)$. 
    
    If there exists $q$ such that $p<q<b$ and $v(a)<v(q)<v(p)$, then take a minimal such $q$. We have, by minimality, that $(a,p)$ and $(a,q)$ are in $E_1(v,w)$. Further, by the induction hypothesis, $(p,b)$ and $(q,b)$ are in $E_{j_p}(v,w)$ and $E_{j_q}(v,w)$ respectively, with $j_p,j_q<i$. Applying the induction hypothesis to $(p,b)$ and $(q,b)$, we have $j_p-1$ and $j_q-1$ elements $k_1^p,\dots,k_{j_p-1}^p$ and $k_1^q,\dots,k_{j_q-1}^q$ of $\mcA_{pb}(v)$ and $\mcA_{qb}(v)$ respectively. Letting $k_0^p=p$, $k_0^q=q$, and $k_{j_p}^p=k_{j_q}^q=b$, we must have $(k_r^p,k_{r+1}^p)\in E_1(v,w)$ and $(k_r^q,k_r^{q+1})\in E_1(v,w)$ for every $r$, so these give directed paths in $\mcG_1(v,w)$ from $p$ to $b$ and from $q$ to $b$. Together with the edges $(a,p)\in E_1(v,w)$ and $(a,q)\in E_1(v,w)$, there is a cycle in $\overline{\mcG_1(v,w)}$. 
    
    Thus, there is no such $q$, so $(p,b)\in E_{i-1}(v,w)$, because $(a,b)$ is in $E_i(v,w)$ and is a minimal element with respect to $\trianglelefteq$ of
    \begin{align*}
        \{(i,j)\in R(v,w):(p,b)\trianglelefteq (i,j)\}.
    \end{align*}
    The $k_1^p,\dots,k_{j_p-1}^p$ form the remaining $i-2$ points in $\mcA_{ab}(v)$.
\end{proof}

\begin{definition}
    If $\overline{\mcG_1(v,w)}$ is a forest, then for $i\geq 2$, we can define a directed graph $\mcG_i(v,w)$ with vertices labeled by $E_{i-1}(v,w)$ and edges labeled by $E_i(v,w)$, such that the vertex $(r,s)\in E_{i-1}(v,w)$ is the source (resp. target) of the edge $(a,b)$ if and only if $(r,s)\trianglelefteq (a,b)$ and $r=a$ (resp. $s=b$). Observe that $D_i$ is the signed incidence matrix of $\mcG_i(v,w)$ when $x=\vec 1$.
\end{definition}

\begin{proposition}\label{prop:sufficientconditionforopen}
    Let $v\preceq w$. If every connected component of every $\overline{\mcG_i(v,w)}$ is a tree, then $M^{v,w}(\vec 1)$ has full rank.
\end{proposition}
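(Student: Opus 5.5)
The plan is to use the block lower-triangular decomposition of $M^{v,w}(\vec 1)$ set up just before \Cref{lem:uniquepath}. After deleting identically zero columns and permuting rows and columns, $M^{v,w}(x)$ has diagonal blocks $D_1,\dots,D_m$. Here $D_i$ has rows indexed by $E_i(v,w)$ and columns indexed by $E_{i-1}(v,w)$, and every entry above the block diagonal vanishes. For a block lower-triangular matrix whose diagonal blocks have full row rank, the whole matrix has full row rank. The argument is an induction: append the rows of $E_i(v,w)$ one layer at a time. The new rows restricted to the new columns $E_{i-1}(v,w)$ have full row rank, and the earlier rows vanish on those columns, so the rank increases by exactly $|E_i(v,w)|$. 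It therefore suffices to show that each $D_i$, evaluated at $x=\vec 1$, has rank $|E_i(v,w)|$. Since the $E_i(v,w)$ partition $R(v,w)$, we would then get $\rank M^{v,w}(\vec 1)=\sum_i|E_i(v,w)|=|R(v,w)|$.

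For $i=1$, the proof of \Cref{cor:nocycle} already identifies $D_1$ with the transpose of the signed incidence matrix of $\mcG_1(v,w)$, up to row scalings by $x_{v(a)v(b)}=1$. By \Cref{cor:allcomponentstree}, the hypothesis that $\overline{\mcG_1(v,w)}$ is a forest gives that $D_1$ has full rank $|E_1(v,w)|$. This same hypothesis makes the definition of the higher graphs $\mcG_i(v,w)$ legitimate.

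For $i\ge 2$, I must verify the remark in the definition that $D_i$ at $x=\vec 1$ is, up to transpose, exactly the signed incidence matrix of $\mcG_i(v,w)$. Take a row $(a,b)\in E_i(v,w)$ and a column $(r,s)\in E_{i-1}(v,w)$. The entry is $x_{v(s)v(b)}$ if $a=r$ and $(s,b)\in R(v,w)$, and it is $-x_{v(a)v(r)}$ if $s=b$ and $(a,r)\in R(v,w)$. I will check the following three points.

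\begin{enumerate}
\item Each row has exactly one column of the first kind, namely $(a,p)$, and exactly one of the second kind, namely $(q,b)$. These are exactly the source and target of the edge $(a,b)$ in $\mcG_i(v,w)$.
\item The two kinds never coincide, so no cancellation occurs.
\item Every other column with $a=r$ or $s=b$ either fails to lie in $E_{i-1}(v,w)$ or gives a pair outside $R(v,w)$.
\end{enumerate}

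The tool for all three is \Cref{lem:uniquepath}. It says the $i-1$ points of $v$ inside $\mcA_{ab}(v)$ form an increasing chain $k_1<\dots<k_{i-1}$. So the elements of $E_{i-1}(v,w)$ contained in $(a,b)$ sharing its first coordinate are forced to be $(a,k_{i-2})$-type pairs: the pair $(a,r')$ with exactly $i-2$ interior points is unique. Symmetrically, $(k_1,b)$ is the unique one sharing the second coordinate. The companion pairs $(s,b)$ and $(a,r)$ then lie in $R(v,w)$, because their rectangles sit inside the shaded rectangle $\mcA_{ab}(v)$ and \Cref{lem:visuallydetermineR} applies.

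This bookkeeping step is where I expect the main obstacle. The claims to establish are that the "source" column is exactly the pair in $E_{i-1}(v,w)$ contained in $(a,b)$ with first coordinate $a$, that it is unique, and that the relevant entry is nonzero. Getting these right requires care with \Cref{lem:uniquepath} and with how $\trianglelefteq$ interacts with shared endpoints. Once it is done, $D_i(\vec 1)$ has entries $\pm1$ in the incidence pattern of $\mcG_i(v,w)$. By \Cref{cor:allcomponentstree} and the forest hypothesis on $\overline{\mcG_i(v,w)}$, $D_i$ then has full rank $|E_i(v,w)|$, which completes the argument.
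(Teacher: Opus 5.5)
Your proposal is correct and takes the same approach as the paper: the block lower-triangular structure of $M^{v,w}(\vec 1)$, the identification of each diagonal block $D_i$ with the transposed signed incidence matrix of $\mcG_i(v,w)$, and \Cref{cor:allcomponentstree} applied to each block. Your use of \Cref{lem:uniquepath} to justify that identification supplies a step the paper only asserts when defining $\mcG_i(v,w)$, though there is one index slip: with interior chain $k_1<\dots<k_{i-1}$, the source column in $E_{i-1}(v,w)$ is $(a,k_{i-1})$, which has the required $i-2$ interior points, not $(a,k_{i-2})$.
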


\begin{proof}
    The submatrix $D=D_1$  is the signed incidence matrix of $\mcG_1(v,w)$, and in fact, $D_i$ is the signed incidence matrix of $\mcG_i(v,w)$. Thus, by \Cref{cor:allcomponentstree}, each $D_i$ has full rank, and so $M^{v,w}(\vec 1)$ does as well.
\end{proof}

\subsection{The Necessary Condition}

In this subsection, we show that the graphs $\mcG_j(v,w)$ allow us to characterize the configurations in $\Br(v,w)$ whenever there is no open $B_v$-orbit in $N_{X(w),e_v}^\vee C(v)$. 

\begin{definition}
    Let $\mathcal G=(V,E)$ be a directed graph. The line digraph $L(\mathcal G)$ has vertex set $E$, and its edges are of the form $((v_1,v_2),(v_2,v_3))$ with $(v_1,v_2),(v_2,v_3)\in E$. 
\end{definition}

The definition for undirected graphs is analogous.

We now recall a theorem of Chartrand on line graphs of trees {\cite[Theorem 8.5]{Harary1969GraphTheory}}.

\begin{theorem}[Chartrand]\label{thm:chartrandblock}
    The line graph $L(\mathcal T)$ of an undirected tree $\mathcal T$ is a block graph. That is, every bi-connected subset (a subset $A\subseteq T$ such that for every $v\in A$, the induced subgraph $A\setminus \{v\}\subseteq T$ is connected) is a clique. 
\end{theorem}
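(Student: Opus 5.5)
The approach is a cut-vertex argument. Removing an edge $g$ of a tree $\mathcal T$ disconnects $\mathcal T$, and this forces $g$ to be a cut vertex of $L(\mathcal T)$ separating the edges on its two sides. I would show that if a subset $A$ contains two non-adjacent vertices of $L(\mathcal T)$, then such a separating edge $g$ breaks the connectivity required by bi-connectedness. Hence every bi-connected subset must be a clique.

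\textbf{Step 1.} I first record what adjacency means. Two vertices $e,f$ of $L(\mathcal T)$ are edges of $\mathcal T$, and they are adjacent in $L(\mathcal T)$ exactly when they share an endpoint. Suppose $e=\{a,b\}$ and $f=\{c,d\}$ are non-adjacent, so $\{a,b\}\cap\{c,d\}=\emptyset$. Consider the unique path in $\mathcal T$ joining the nearest endpoints of $e$ and $f$. Since these endpoints are distinct, this path contains at least one edge $g$, and $g\neq e,f$ because the path is the shortest connection between the two edges.

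\textbf{Step 2.} Next I show that every path from $e$ to $f$ in $L(\mathcal T)$ passes through $g$. Deleting $g$ from $\mathcal T$ leaves two components $\mathcal T_1$ and $\mathcal T_2$, with $e\subseteq\mathcal T_1$ and $f\subseteq\mathcal T_2$. Two edges of $\mathcal T$ other than $g$ that share a vertex lie in the same component. So a walk in $L(\mathcal T)$ that avoids $g$ stays among the edges of a single $\mathcal T_i$, and therefore cannot get from $e$ to $f$. Consequently $e$ and $f$ lie in different components of $L(\mathcal T)\setminus\{g\}$, and hence of any induced subgraph on a vertex set containing $e,f$ but not $g$.

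\textbf{Step 3.} Now let $A$ be bi-connected in the sense of the statement, and suppose for contradiction that $A$ contains non-adjacent $e,f$. Take $g$ as in Step 1. There are two cases.
\begin{itemize}
\item If $g\in A$, then $A\setminus\{g\}$ contains $e$ and $f$ but not $g$, so it is disconnected by Step 2. This contradicts the definition of bi-connected.
\item If $g\notin A$, pick any vertex $h\in A\setminus\{e,f\}$. Then $A\setminus\{h\}$ still contains $e,f$ and avoids $g$, so it is again disconnected by Step 2.
\end{itemize}
If $A=\{e,f\}$, there is no such $h$. In this degenerate case $A$ is disconnected, which is excluded under the intended reading that a block is connected; I would state this convention explicitly. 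Hence any two elements of $A$ are adjacent, and $A$ is a clique.

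The only delicate point is the separation claim in Step 2, namely that adjacency in $L(\mathcal T)$ never crosses the edge $g$. This is where acyclicity of $\mathcal T$ is used, since $g$ is a bridge. Once that is in place, the rest is bookkeeping with the definition of bi-connected subsets and the convention for sets of size at most $2$.
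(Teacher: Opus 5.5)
Your proof is correct. The paper gives no argument for this statement; it quotes it from \cite[Theorem 8.5]{Harary1969GraphTheory}, so your proof is an independent, self-contained route. There the fact sits inside the characterization of line graphs of trees as the connected block graphs in which each cut vertex lies on exactly two blocks. Its content is that the blocks of $L(\mathcal T)$ are precisely the stars, i.e.\ the sets of edges of $\mathcal T$ through a common vertex.

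You prove only the clique property, by a direct separation argument. For disjoint edges $e,f$, any edge $g$ on the tree path between them is a bridge of $\mathcal T$, hence a cut vertex of $L(\mathcal T)$ separating $e$ from $f$. This is short, elementary, and matches the statement as the paper phrases it. One cost is worth noting: \Cref{lem:centralvertex} actually uses the star form, since it extracts a common vertex $c$. From your conclusion one still needs the small extra observation that a set of pairwise-intersecting edges in a triangle-free graph has a common endpoint.

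Your handling of the degenerate case is right, and you should state it explicitly. Under the paper's literal parenthetical definition, two non-adjacent vertices of $L(\mathcal T)$ form a ``bi-connected'' set that is not a clique, so the statement as written is false for $|A|=2$. For $|A|\ge 3$, the literal condition already forces $A$ to be connected, and your auxiliary vertex $h$ disposes of the case $g\notin A$. So the only fix needed is the connectedness convention (or the hypothesis $|A|\ge 3$). This is harmless for the paper, because \Cref{lem:centralvertex} applies the theorem only to the vertex set of a cycle, which is connected and has at least three elements.
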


The following graph-theoretic lemma will allow us to show that, if $\overline{\mcG_1(v,w)}$ is a forest, then a cycle in $\overline{\mcG_i(v,w)}$ with $i>1$ induces a configuration in $\Br(v,w)$ akin to \Cref{fig:45312withshading}.

\begin{lemma}\label{lem:centralvertex}
    Let $\mathcal G=(V,E)$ be a directed graph such $\overline{\mathcal G}$ is a forest, and let $\mathcal H$ be a directed subgraph of $L(\mathcal G)$ such that $\overline{\mathcal H}$ contains a cycle. Then there is some $c$ in $\mathcal G$ such that every element of the cycle is an edge $(v,c)$ or an edge $(c,v)$ for some $v$ in $\mathcal H$. Further, every edge in the cycle is from a vertex $(v,c)$ to a vertex $(c,v')$.
\end{lemma}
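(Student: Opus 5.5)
The plan is to pass to the undirected picture and use \Cref{thm:chartrandblock}. Every edge of $\mathcal H$ joins two edges $(v_1,v_2),(v_2,v_3)$ of $\mathcal G$, so it is in particular an edge of the undirected line graph $L(\overline{\mathcal G})$. Hence a cycle in $\overline{\mathcal H}$, say on the vertices $e_1,\dots,e_\ell$ with $\ell\geq 3$ (there are no multi-edges), is a cycle in $L(\overline{\mathcal G})$. Since $\overline{\mathcal G}$ is a forest, we apply \Cref{thm:chartrandblock} to the tree component containing these edges; this is harmless because the cycle is connected.

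First I will show that the vertex set of the cycle is a clique in $L(\overline{\mathcal G})$. The cycle is $2$-connected: removing any single vertex leaves a path. It therefore lies inside a single block of the line graph. By \Cref{thm:chartrandblock} that block is a clique, so the edges $e_1,\dots,e_\ell$ of $\overline{\mathcal G}$ pairwise share an endpoint.

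Next I will show that pairwise-intersecting edges of a forest, at least three of them, form a star. The argument is as follows. Take $e_1=\{x,y\}$. Suppose some $e_j$ contains $x$ but not $y$, and some $e_k$ contains $y$ but not $x$. Since $e_j$ and $e_k$ must meet, they share a third vertex $z$, and then $x,y,z$ form a triangle in $\overline{\mathcal G}$, which contradicts acyclicity. So every $e_j$ contains a common vertex, which we call $c$. Because there are no multi-edges, this common vertex is genuinely shared as long as $\ell\ge 3$, or as long as the cycle has at least two distinct vertices. This proves the first claim: every element of the cycle is an edge $(v,c)$ or $(c,v)$.

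Finally I will treat orientations. An edge of $\mathcal H\subseteq L(\mathcal G)$ has the form $((v_1,v_2),(v_2,v_3))$, so its shared vertex $v_2$ is the head of the first edge and the tail of the second. Two distinct edges of a forest share at most one vertex, and here they share $c$, so $v_2=c$. Hence every edge of the cycle goes from a vertex $(v,c)$ to a vertex $(c,v')$, which is the second claim.

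The main obstacle I expect is the block argument. The statement of \Cref{thm:chartrandblock} is phrased loosely, so I need care to justify that a cycle lies in one maximal biconnected subset, namely a block, and that the theorem applies to the component containing it. If this becomes awkward, the fallback is a direct argument. Consider a cycle in $L(\overline{\mathcal G})$ that is not a star. Its consecutive edges share vertices, so walking along it traces a closed walk in $\overline{\mathcal G}$. Taking the shortest non-star cycle, this walk would produce a genuine cycle in the forest, which is a contradiction.
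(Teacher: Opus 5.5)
Your proposal is correct and follows the same route as the paper. Chartrand's theorem makes the vertex set of the cycle a clique in $L(\overline{\mathcal G})$, pairwise-intersecting edges of a forest share a common vertex $c$, and the head/tail structure of edges of $L(\mathcal G)$ forces the shared vertex of each cycle edge to be $c$. Your triangle argument also supplies the justification for the clique-to-common-vertex step, which the paper only asserts. The fallback in your last paragraph is unnecessary, and as sketched it is not valid, since a closed walk in a forest need not contain a cycle; you can drop it.
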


\begin{proof}
    Take some cycle in $\overline{\mathcal H}$ with vertices $(v_1,v_2),\dots,(v_{n-1},v_n)$. Then the set
    \begin{align*}
        \{(v_1,v_2),\dots,(v_{n-1},v_n)\}
    \end{align*}
    is bi-connected, so by \Cref{thm:chartrandblock}, the induced subgraph $\{(v_1,v_2),\dots,(v_{n-1},v_n)\}\subseteq L(\overline{\mathcal G})$ is a clique. Therefore, there exists an element $c$ of $\{v_1,v_2\}$ such that $v_i=c$ or $v_{i+1}=c$ for every $i$. Without loss of generality, $c=v_1$. Let $(v_i',v_{i+1}')$ be the directed edge in $\mathcal G$ corresponding to the node $(v_i,v_{i+1})$ in $\overline{L(\mathcal G)}$. Taking some edge in the cycle from $(v_i',v_{i+1}')$ to $(v_{i+2}',v_{i+3}')$, we have $v_{i+1}'=v_{i+2}'$. Further, we must have $c\in \{v_i',v_{i+1}'\}$ and $c\in \{v_{i+2}',v_{i+3}'\}$, so $c=v_{i+1}'=v_{i+2}'$.
\end{proof}

The following is the main result of the section, and describes the configurations that must occur in the $\Br(v,w)$ when there is no open $B_v$-orbit in $N^\vee_{X(w),e_v}C(v)$. 

\begin{theorem}\label{thm:openorbitclassification}
    Let $v\preceq w$. If $M^{v,w}(x)$ does not have full rank for every $x\in N^\vee_{X(w),e_v}C(v)$, then $\overline{\mcG_1(v,w)}$ has a cycle or there exist $\alpha_1,\alpha_2,\beta_1,\beta_2$ such that the configuration of \Cref{fig:flippedhighergjcycle} occurs in $\Br(v,w)$. 
\end{theorem}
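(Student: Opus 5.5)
We prove the contrapositive. Assume $\overline{\mcG_1(v,w)}$ is a forest, so that \Cref{lem:uniquepath} applies and the graphs $\mcG_i(v,w)$ are defined for all $i\geq 2$. We also assume that the configuration of \Cref{fig:flippedhighergjcycle} does not occur in $\Br(v,w)$. From these two assumptions we show that every $\overline{\mcG_i(v,w)}$ is a forest. Then \Cref{prop:sufficientconditionforopen} gives that $M^{v,w}(\vec 1)$ has full rank, contradicting the hypothesis. So the proof reduces to one claim: if some $\overline{\mcG_i(v,w)}$ with $i\geq 2$ contains a cycle, then the configuration appears.

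The first step is to view $\mcG_i(v,w)$ as a subgraph of a line digraph, so that \Cref{lem:centralvertex} applies. Take an edge $(a,b)\in E_i(v,w)$. By \Cref{lem:uniquepath}, the points of $\mcA_{ab}(v)$ form an increasing chain $a=k_0<k_1<\dots<k_{i-1}<k_i=b$. By the argument in that lemma, each consecutive pair $(k_r,k_{r+1})$ lies in $E_1(v,w)$, so $(a,b)$ corresponds to a directed path of length $i$ in $\mcG_1(v,w)$. The source of $(a,b)$ in $\mcG_i(v,w)$ is $(a,k_{i-1})$ and its target is $(k_1,b)$, both in $E_{i-1}(v,w)$.

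Now identify each element of $E_j(v,w)$ with its unique directed path of length $j$ in the forest $\mcG_1(v,w)$. Then $\mcG_i(v,w)$ embeds in the line digraph of $\mcG_{i-1}(v,w)$. For $i=2$ this is literally $L(\mcG_1(v,w))$. For larger $i$ we induct: assuming $\overline{\mcG_{i-1}(v,w)}$ is a forest (the first $i$ at which a cycle appears), \Cref{lem:centralvertex} applies to $\mathcal G=\mcG_{i-1}(v,w)$ and $\mathcal H=\mcG_i(v,w)$.

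The lemma then yields a central vertex $c\in E_{i-1}(v,w)$, and every edge of the cycle goes from some $(u,c)$ to some $(c,u')$. Translating back to paths, the cycle alternates between edges of $E_i(v,w)$ that extend the common path $c$ at its two ends. Since the cycle has length at least $3$ and the graph is simple, we can extract two distinct edges $(u_1,c),(u_2,c)$ of $\mcG_{i-1}(v,w)$, meeting $c$ at its start, and two distinct edges $(c,u_1'),(c,u_2')$, meeting $c$ at its end. The corresponding elements of $E_i(v,w)$ include $(\alpha_1,\alpha_2)$, $(\alpha_1,\beta_2)$, $(\beta_1,\alpha_2)$ and $(\beta_1,\beta_2)$. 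Here $\alpha_1,\beta_1$ are the two distinct extra starting indices, $\alpha_2,\beta_2$ are the two distinct extra ending indices, and the chain of $c$ gives the concentric points $\mu_1<\dots<\mu_{i-1}$ in between.

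The last step is to read off the Bruhat picture and match it to \Cref{fig:flippedhighergjcycle}, in the 45312 shape. Each of the four rectangles $\mcA_{\alpha_1\alpha_2}(v),\dots$ lies in $R(v,w)$ and contains exactly the chain of $c$, by \Cref{lem:uniquepath}. Minimality in $\trianglelefteq$, via \Cref{lem:visuallydetermineR}, rules out any other filled dots in the shaded regions. Forest-ness of $\mcG_1$ forces the relative order $\beta_1<\alpha_1$, $\beta_2<\alpha_2$ (after relabeling) and the value order $v(\beta_1)<v(\alpha_1)<v(\mu)<v(\beta_2)<v(\alpha_2)$ (or the mirrored version). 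A mismatched order would give two directed paths in $\mcG_1$ between the same endpoints, hence a cycle in $\overline{\mcG_1(v,w)}$.

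I expect the main obstacle to be this final translation from graph data to the precise picture. One must pin down the relative positions of $\alpha_1,\beta_1$ (both to the upper left of the chain, with incomparable rectangles) and of $\alpha_2,\beta_2$. One must also verify that the shading $d_{v,w}$ matches the figure, which needs an argument that $(\alpha_1,\beta_1)$ and $(\alpha_2,\beta_2)$ are not in $R(v,w)$, or otherwise handling that case. I would first try to derive every forbidden case from a cycle in $\overline{\mcG_1(v,w)}$, using the explicit paths given by \Cref{lem:uniquepath}.
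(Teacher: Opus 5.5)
Your plan is the paper's: contrapositive through \Cref{prop:sufficientconditionforopen}, take the least $i\geq 2$ with a cycle, and apply \Cref{lem:centralvertex} with $\mathcal G=\mcG_{i-1}(v,w)$ and $\mathcal H=\mcG_i(v,w)$. There are two harmless slips here. The central vertex is a vertex of $\mcG_{i-1}(v,w)$, so $c=(r,s)\in E_{i-2}(v,w)$, not $E_{i-1}(v,w)$. Also, $\mcG_2(v,w)$ is in general a proper subgraph of $L(\mcG_1(v,w))$: for $v=[1,2,3]$, $w=[2,3,1]$ one has $E_1(v,w)=\{(1,2),(2,3)\}$ but $E_2(v,w)=\emptyset$.

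The real trouble is the final translation, which you leave unfinished, and the value order you state there is wrong. Call $a$ a starting index if $(a,s)\in E_{i-1}(v,w)$ has target $c$. Suppose $a<a'$ are starting indices with $v(a)<v(a')$. Then $\pt_v(a')\in\mcA_{as}(v)$. But $\mcA_{as}(v)$ already contains the $i-2$ chain dots of $c$ other than $\pt_v(s)$, so this is one dot too many by \Cref{lem:uniquepath}. Hence two starting indices always form a descent of $v$, and likewise two ending indices. With your labels ($\beta_1<\alpha_1$ starting, $\beta_2<\alpha_2$ ending) this forces
$v(\alpha_1)<v(\beta_1)<v(r)\leq v(s)<v(\alpha_2)<v(\beta_2)$.
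This matches the 45312 Case of \Cref{thm:maxsingclassification} applied to $(w_0w,w_0v)$. The order $v(\beta_1)<v(\alpha_1)$, $v(\beta_2)<v(\alpha_2)$ that you assert (and any mirror of it) is exactly what \Cref{lem:uniquepath} excludes. The correct order also settles your worry about $(\alpha_1,\beta_1)$ and $(\alpha_2,\beta_2)$: the pairs $\{\alpha_1,\beta_1\}$ and $\{\alpha_2,\beta_2\}$ are inversions of $v$, so they are not in $R(v,w)$. In any case, \Cref{fig:flippedhighergjcycle} asserts nothing about $d_{v,w}$ outside its shaded region. Note too that the chain dots are filled dots; nothing makes them concentric.

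Second, you assert that $(\alpha_1,\alpha_2)$, $(\alpha_1,\beta_2)$, $(\beta_1,\alpha_2)$, $(\beta_1,\beta_2)$ all lie in $E_i(v,w)$. This does not follow from the cycle having length at least $3$. The cycle alternates between starting and ending indices, so it meets two of each. But a bipartite cycle of length $6$ or more need not contain a $4$-cycle, so all four cross pairs are not automatic.

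The claim does happen to be true here. The parts of $\mcA_{ab}(v)$ not covered by $\mcA_{as}(v)\cup\mcA_{rb}(v)$ are the corners $[s,b)\times(v(a),v(r)]$ and $(a,r]\times[v(s),v(b))$. These vary monotonically in opposite directions, so the set of edges is order-convex in the product of the two chains. That, however, needs an argument.

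The paper sidesteps this. Take the leftmost starting index $a$ on the cycle, its two cycle-neighbours $b<b'$, and the other cycle-neighbour $a'$ of $b'$, so $a<a'$. Only $\mcA_{ab}(v)$ and $\mcA_{a'b'}(v)$ are needed:
\begin{itemize}
\item Since $v(a')<v(a)$ and $v(b')<v(b)$, their union already contains $\mcA_{ab'}(v)$ and $\mcA_{a'b}(v)$. So the union gives the shading of \Cref{fig:flippedhighergjcycle}.
\item Each of the two rectangles contains exactly the $i-1$ chain dots of $c$ by \Cref{lem:uniquepath}. So there are no other filled dots in the shaded region.
\end{itemize}
With these corrections your argument becomes the paper's.
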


\begin{figure}[h]
\begin{center}
    \includegraphics[width=8cm]{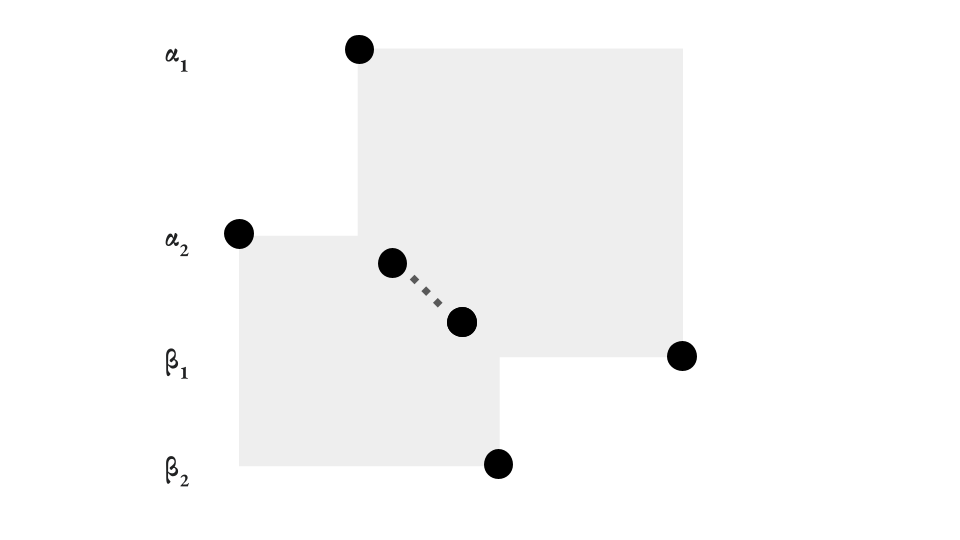}
\end{center}
\caption{On the shaded region, $d_{v,w}$ is positive, and the shaded region contains no filled dots other than the indicated ones. By the dotted line, we mean that the filled dots continue down and to the right in order.}
\label{fig:flippedhighergjcycle}
\end{figure}

\begin{proof}
    If every connected component of every $\overline{\mcG_i(v,w)}$ is a tree, we know that $M^{v,w}(\vec 1)$ has full rank by \Cref{prop:sufficientconditionforopen}. Thus it remains to show that $\alpha_1,\alpha_2,\beta_1,\beta_2$ in the configuration of \Cref{fig:flippedhighergjcycle} exist when $\overline{\mcG_1(v,w)}$ is a forest and some $\overline{\mcG_i(v,w)}$ contains a cycle. Take $j>1$ to be the smallest index such that $\overline{\mcG_j(v,w)}$ contains a cycle. The digraph $\mcG_j(v,w)$ is a subgraph of the line digraph of $\mcG_{j-1}(v,w)$, so by \Cref{lem:centralvertex}, there is some $(r,s)\in E_{j-2}(v,w)$ such that every vertex of the cycle is labeled by a vertex of the form $(a,s)\in E_{j-1}(v,w)$ or $(r,b)\in E_{j-1}(v,w)$. Further, every edge in the cycle is from some $(a,s)$ to some $(r,b)$. Choose the minimum such $a$, and call it $\alpha_1$. The two edges in the cycle containing $(\alpha_1,s)$ must be of the form $(\alpha_1,\beta_1)$ and $(\alpha_1,\beta_2)$. Without loss of generality, $\beta_1<\beta_2$. The other edge of the cycle containing $\beta_2$ must be of the form $(\alpha_2,\beta_2)$ for some $\alpha_2$. We have $\alpha_1<\alpha_2$ by the construction of $\alpha_1$, and we have $\alpha_2<r\leq s<\beta_1$. Also, both $\mcA_{\alpha_1\beta_1}(v)$ and $\mcA_{\alpha_2\beta_2}(v)$ are shaded in $\Br(v,w)$. 
    
    Because $\overline{\mcG_1(v,w)}$ is a forest, \Cref{lem:uniquepath} implies that $\Br(v,w)$ contains the configuration in \Cref{fig:flippedhighergjcycle}, with the central filled dots forming $(r,s)\in E_{j-2}(v,w)$. To see why, consider the following arguments.
    \begin{enumerate}
        \item The inequalities we have already established give us the correct relative positions of $\alpha_1$, $\alpha_2$, $\beta_1$, $\beta_2$, $r$, and $s$, as indicated in \Cref{fig:flippedhighergjcycle}.
        \item The shading is accurate, because the entire region is contained in $\mcA_{\alpha_1\beta_1}(v)$ and $\mcA_{\alpha_2\beta_2}(v)$, as can be seen in \Cref{fig:flippedhighergjcycle}.
        \item The filled dots continuing down and to the right in order is a consequence of \Cref{lem:uniquepath} and $(r,s)$ being in $E_{j-2}(v,w)$. Similarly, there are exactly $j-1$ filled dots in $(r,s)\times (v(r),v(s))$, and the dots are in the depicted order.
        \item There are no filled dots in the shaded region other than the ones indicated in \Cref{fig:flippedhighergjcycle}. This is because, in each of $\mcA_{\alpha_1\beta_1}(v)$ and $\mcA_{\alpha_2\beta_2}(v)$, there are already $(j-2)+2=j$ indicated dots; thus, having any more would contradict $(\alpha_1,\beta_1)\in E_j(v,w)$ or \newline $(\alpha_2,\beta_2)\in E_j(v,w)$. \qedhere
    \end{enumerate}
    
    % \begin{figure}[h]
    % \begin{center}
    %     \includegraphics[width=8cm]{Images/FlippedHigherGjCycleColored.png}
    % \end{center}
    % \caption{On the shaded region, $d_{v,w}$ is positive, and the shaded region contains no filled dots other than the indicated ones. By the dotted line, we mean that the filled dots continue down and to the right in order. The red rectangle is $\mcA_{\alpha_1\beta_1}$, and the blue rectangle is $\mcA_{\alpha_2\beta_2}(v)$.}
    % \label{fig:flippedhighergjcyclecolored}
    % \end{figure}
\end{proof}

\section{Proof of the Main Theorem}\label{sec:maintheorem}

In this section, we prove \Cref{thm:maintheorem}. In light of \Cref{thm:singularimpliesnoopenorbit} and \Cref{thm:openorbitclassification}, it remains to show that, if $X(w)$ is factorial and there is cycle in some $\overline{\mcG_i(v,w)}$, then $X(w_0v)$ is singular at $e_{w_0w}$. In \Cref{sec:preliminarylemmas}, we prove some preliminary lemmas, which in combination with \Cref{thm:openorbitclassification} relate singularity to open orbits. In \Cref{sec:proofoffinalimplication}, we complete the proof of this final implication, using a result of Dyer on the coefficient of $q$ in the Kazhdan--Lusztig polynomials $P_{v,w}(q)$.

\subsection{Preliminary Lemmas}\label{sec:preliminarylemmas}

Here, we will prove several auxiliary lemmas about Bruhat pictures which will be useful in our proof of \Cref{thm:maintheorem}.

\begin{lemma}\label{lem:lessthanshading}
    Let $v\preceq w$ and $v'\preceq w$. Then $v\preceq v'$ if and only if $d_{v',w}(p,q)\leq d_{v,w}(p,q)$ for all $p,q\in \mathbb Z$. 
\end{lemma}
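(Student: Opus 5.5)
The plan is to reduce the statement to the standard rank-function (tableau) criterion for Bruhat order, which is exactly part (1) of \Cref{lem:visuallydetermineR} applied to a different pair. The key observation is that $d_{v',w}$ and $d_{v,w}$ share the same $r_w$ term, so subtracting them cancels it:
\begin{align*}
    d_{v,w}(p,q)-d_{v',w}(p,q)=\bigl(r_w(p,q)-r_v(p,q)\bigr)-\bigl(r_w(p,q)-r_{v'}(p,q)\bigr)=r_{v'}(p,q)-r_v(p,q)=d_{v,v'}(p,q).
\end{align*}
This identity is a purely formal consequence of the definitions and holds for all $p,q\in\mathbb Z$. It needs no hypothesis on how $v,v',w$ compare, although the hypotheses $v\preceq w$ and $v'\preceq w$ are what make both functions nonnegative, so that they can be compared meaningfully in Bruhat pictures.

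With this identity, the inequality $d_{v',w}(p,q)\leq d_{v,w}(p,q)$ for all $(p,q)$ is equivalent to $d_{v,v'}(p,q)\geq 0$ for all $(p,q)$. Part (1) of \Cref{lem:visuallydetermineR}, applied to the pair $(v,v')$, says this holds if and only if $v\preceq v'$. That lemma is quoted from Billey--Warrington and is stated for arbitrary permutations $v$ and $w$, so it applies to the pair $(v,v')$ without any a priori relation between them. Both directions of the equivalence then follow at once.

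The only point requiring care is confirming that part (1) of \Cref{lem:visuallydetermineR} is genuinely the unconditional criterion ``$v\preceq w\iff r_v\leq r_w$ pointwise'' for the rank function $r_w(p,q)=\#\{i\leq p: w(i)\geq q\}$ used here. I expect this to be the main, if mild, obstacle. If it were in doubt, I would instead invoke the classical Ehresmann tableau criterion \cite[Theorem 2.1.5]{BjornerBrenti}. That criterion is phrased with $\#\{i\leq p: w(i)\geq q\}$, or equivalently with the complementary count $\#\{i\leq p: w(i)<q\}=p-r_w(p,q)$ and reversed inequality, so it matches our conventions after this translation. Either way, the proof is a two-line computation followed by a citation.
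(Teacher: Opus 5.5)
Your proof is correct and matches the paper's argument: cancelling the common $r_w$ term gives $d_{v,w}-d_{v',w}=d_{v,v'}$, and part (1) of \Cref{lem:visuallydetermineR} applied to the pair $(v,v')$ finishes. You are also right that the hypotheses $v\preceq w$ and $v'\preceq w$ are not used in the equivalence itself.
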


\begin{proof}
    This is simply because, for all $p,q$,
    \begin{align*}
        r_w(p,q)-r_{v'}(p,q)\leq r_w(p,q)-r_{v}(p,q)
    \end{align*}
    is equivalent to $d_{v,v'}(p,q)\geq 0$, which is equivalent to $v\preceq v'$.
\end{proof}

The following corollary gives an interpretation of the singularity of $X(w)$ at $e_v$ in terms of $\Br(v,w)$. This ties the occurrence of \Cref{fig:flippedhighergjcycle} in \Cref{thm:openorbitclassification} to singularity.

\begin{corollary}\label{cor:singularcharacterization}
    Let $v\preceq w$. Then $X(w)$ is singular at $e_v$ if and only if there exist integers $\alpha_1,\dots\alpha_m$ and $\beta_1,\dots,\beta_k$ in $[n]$ such that the unfilled dots in these rows are in the configurations of \Cref{fig:nonmaximal4231}, \Cref{fig:nonmaximal3412}, or \Cref{fig:nonmaximal45312}. Further,
    \begin{enumerate}
        \item every indicated region is shaded;
        \item the shaded regions contain no unfilled dots, other than possibly in the rectangle \newline $(\alpha_1,\beta_2)\times (v(\alpha_2),v(\beta_1))$, as depicted in \Cref{fig:nonmaximal45312}; and
        \item in \Cref{fig:nonmaximal45312}, the unfilled dots in $(\alpha_1,\beta_2)\times (v(\alpha_2),v(\beta_1))$ must proceed down and to the left in order.
    \end{enumerate}
\end{corollary}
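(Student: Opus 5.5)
The plan is to reduce \Cref{cor:singularcharacterization} to the Billey--Warrington classification \Cref{thm:maxsingclassification} together with the monotonicity statement \Cref{lem:lessthanshading}. The key fact is that the singular locus of $X(w)$ is closed and $B$-stable, hence a union of Schubert varieties. So $X(w)$ is singular at $e_v$ if and only if there is some $u\in\maxsing(w)$ with $v\preceq u$.

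For the forward direction, fix such a $u=wc$, with $c=(\alpha_1,\dots,\alpha_m,\beta_k,\dots,\beta_1)$ as in \Cref{thm:maxsingclassification}. The picture $\Br(u,w)$ is then one of \Cref{fig:4231withshading}, \Cref{fig:3412withshading}, or \Cref{fig:45312withshading}.
\begin{enumerate}
\item I read off the unfilled dots $\pt_w(\alpha_i),\pt_w(\beta_i)$ in the rows $\alpha_i,\beta_i$. These are the same in $\Br(v,w)$ as in $\Br(u,w)$, since only the filled dots change.
\item Since $v\preceq u$, \Cref{lem:lessthanshading} gives $d_{v,w}\geq d_{u,w}$. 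Hence every region shaded in $\Br(u,w)$ is still shaded in $\Br(v,w)$, which is condition (1).
\item The unfilled dots of $w$ inside these regions are unchanged. The maximal-singular pictures allow none there, except the concentric circles in the 45312 Case, which lie in the rectangle $(\alpha_1,\beta_2)\times(\cdot,\cdot)$ and decrease in order. As unfilled dots these give exactly conditions (2) and (3).
\end{enumerate}
The remaining work in this direction is matching the coordinates of that rectangle to those stated, which I would do by direct inspection of the figure.

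For the converse, suppose rows $\alpha_i,\beta_i$ with the stated unfilled-dot configuration and shading exist in $\Br(v,w)$. I define $u=wc$, where $c$ is the cycle built from these rows. The relative order conditions of \Cref{thm:maxsingclassification} on the $\alpha$'s, $\beta$'s and $w$-values can be read off the figures. The emptiness conditions on the unfilled dots, and the ordering of the concentric circles in the 45312 Case, come from (2) and (3). This gives $u\in\maxsing(w)$.

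It then remains to show $v\preceq u$. By \Cref{lem:lessthanshading}, this is equivalent to $d_{u,w}\leq d_{v,w}$ everywhere. In each case $d_{u,w}$ is the indicator of the shaded region, which is at most $1$, vanishes elsewhere, and is supported exactly on the indicated regions. By (1), those regions have $d_{v,w}\geq1$. Combined with $d_{v,w}\geq 0$ from \Cref{lem:visuallydetermineR}, this gives the inequality. So $e_v\in X(u)\subseteq \mathrm{Sing}\,X(w)$.

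The main obstacle is the converse in the 45312 Case. There the concentric circles of $u$ are the points $\pt_w(\mu)$ with $u(\mu)=w(\mu)$. I must check that the region where $d_{u,w}=1$ is exactly the union of the indicated regions, and not larger near these circles. I would first try a direct computation of $r_u-r_w$, using that $u$ and $w$ differ only on the rows $\alpha_1,\alpha_2,\beta_1,\beta_2$.
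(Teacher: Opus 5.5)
Your proposal is correct and follows essentially the same route as the paper. You reduce to the existence of $u\in\maxsing(w)$ with $v\preceq u$, get the forward direction from the facts that unfilled dots depend only on $w$ and from \Cref{lem:lessthanshading}, and for the converse set $u=wc$, invoke \Cref{thm:maxsingclassification}, and compare $d_{u,w}$ (the indicator of the shaded region) with $d_{v,w}$.

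The obstacle you flag in the 45312 Case is harmless. Since $r_w-r_u$ only counts the rows $\alpha_1,\alpha_2,\beta_1,\beta_2$, the concentric circles cannot enlarge the support of $d_{u,w}$; the paper simply asserts this.
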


\begin{figure}[ht]
    \centering

    \begin{subfigure}[t]{0.32\textwidth}
        \centering
        \includegraphics[width=\linewidth]{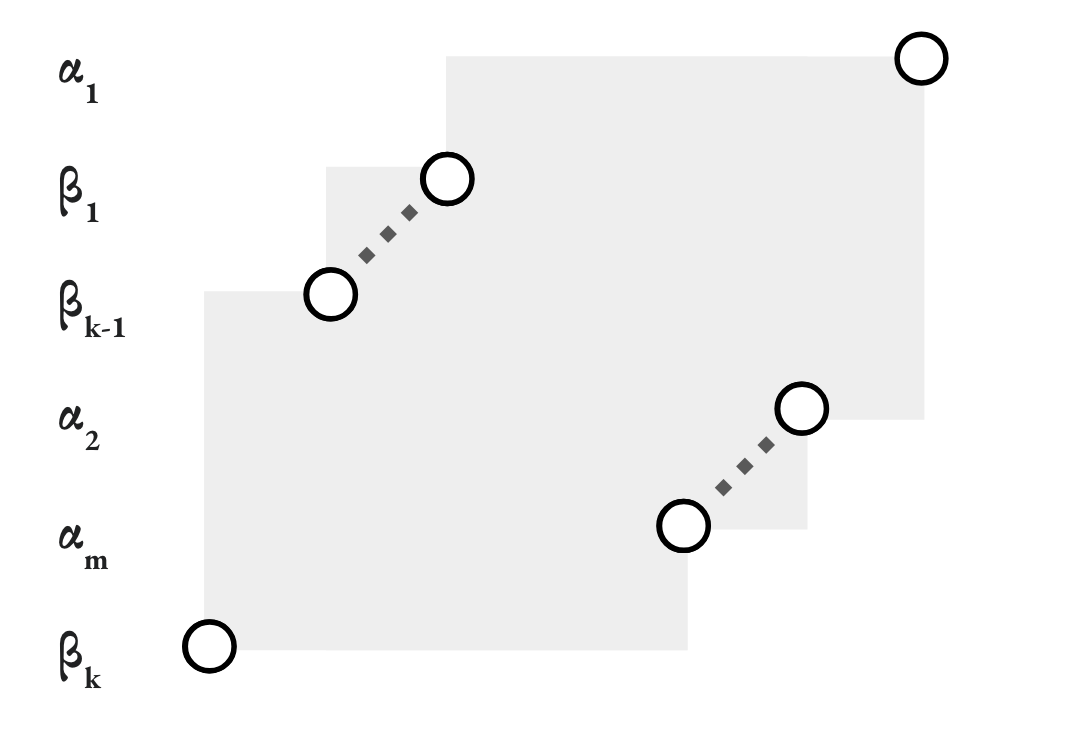}
        \caption{}
        \label{fig:nonmaximal4231}
    \end{subfigure}
    \hfill
    \begin{subfigure}[t]{0.34\textwidth}
        \centering
        \includegraphics[width=\linewidth]{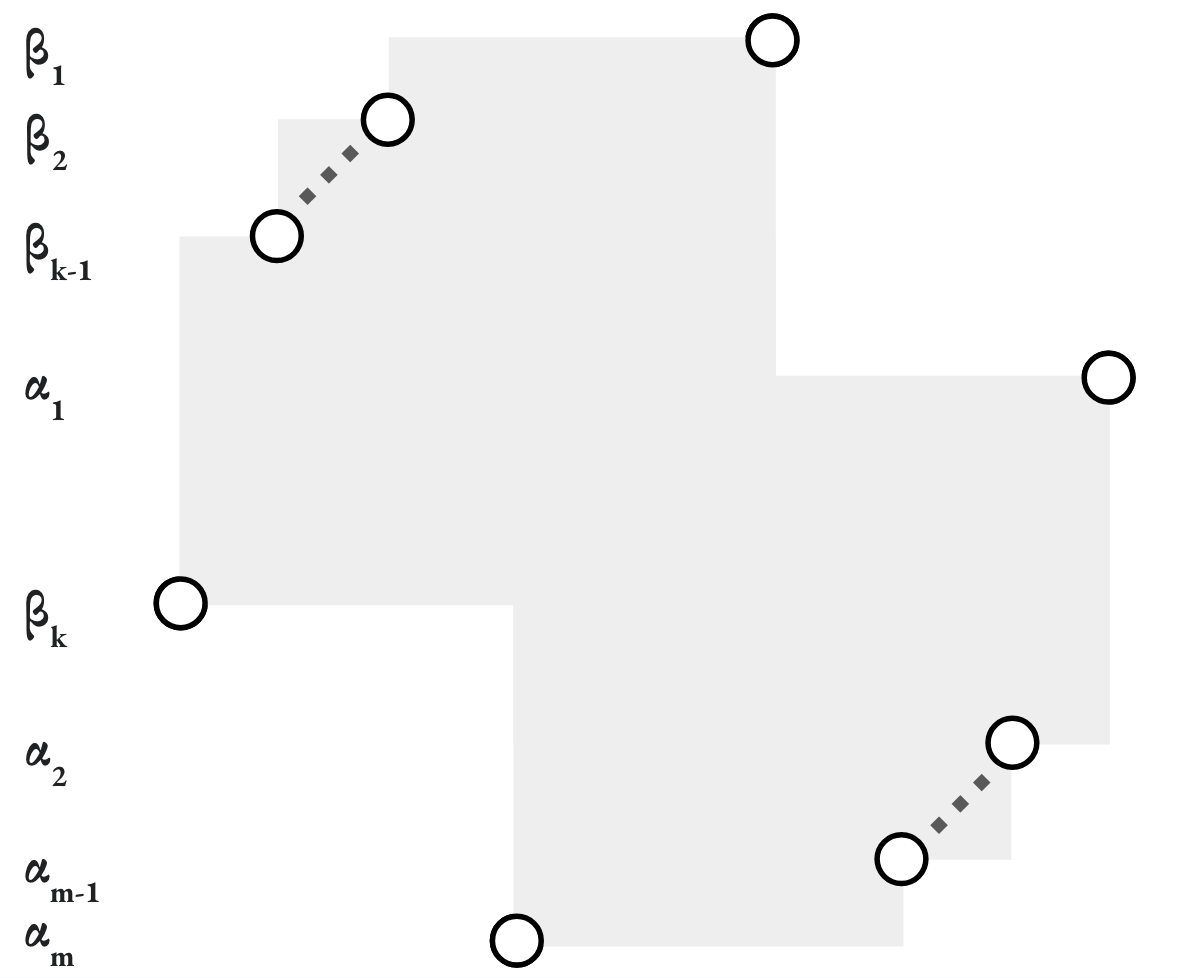}
        \caption{}
        \label{fig:nonmaximal3412}
    \end{subfigure}
    \hfill
    \begin{subfigure}[t]{0.32\textwidth}
        \centering
        \includegraphics[width=\linewidth]{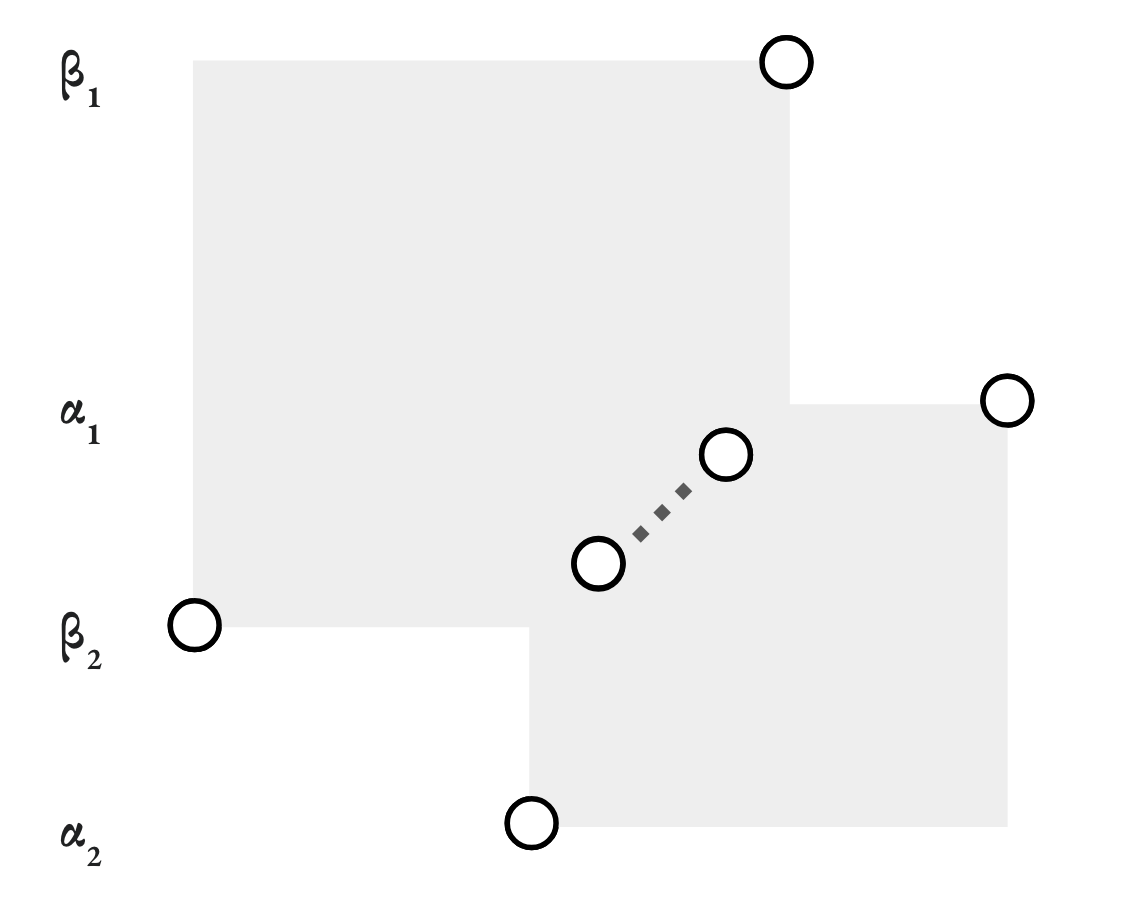}
        \caption{}
        \label{fig:nonmaximal45312}
    \end{subfigure}

    \caption{}
    \label{fig:nonmaximalconfigurations}
\end{figure}

\begin{proof}
    Note that $X(w)$ is singular at $e_v$ if and only if there exists $v'\succeq v$ such that $v'\in \maxsing(w)$. 
    
    First, assume that such a $v'$ exists. By \Cref{thm:maxsingclassification}, we have $v'=w\circ c$, with $c=(\alpha_1,\dots,\alpha_m,\beta_k,\dots,\beta_1)$, such that the pair $(v',w)$ satisfies the conditions of some case of \Cref{thm:maxsingclassification}. Then the unfilled dots in $\Br(v,w)$ will be in the same places as the unfilled dots in $\Br(v',w)$ by \Cref{rem:bruhatflipping}. Also, because $v\preceq v'$ and the indicated area is shaded in $\Br(v',w)$, then the indicated area is shaded in $\Br(v,w)$ by \Cref{lem:lessthanshading}.
    
    Conversely, assume that one of \Cref{fig:nonmaximal4231}, \Cref{fig:nonmaximal3412}, or \Cref{fig:nonmaximal45312} occurs in $\Br(v,w)$. Then letting $c=(\alpha_1,\dots,\alpha_m,\beta_k,\dots,\beta_1)$ and $v'=w\circ c$, we have $d_{v',w}\equiv 1$ on the shaded region of the relevant figure, and $d_{v',w}\equiv 0$ outside of that shaded region. By \Cref{lem:visuallydetermineR}, we have $d_{v,w}\geq 0$ outside of the shaded region, and by the definition of shading we have $d_{v,w}\geq 1$ on the shaded region. This gives $d_{v,w}\geq d_{v',w}$, and applying \Cref{lem:lessthanshading} yields $v\preceq v'$. Finally, $v'\in \maxsing(w)$ by \Cref{thm:maxsingclassification}, so $X(w)$ is singular at $e_v$. 
\end{proof}

\begin{lemma}\label{lem:fliplemma}
    Let $v\preceq w$. If $\mcA_{ij}(v)$ is shaded in $\Br(v,w)$, then $(i,j)\times (w_0v(j),w_0v(i))$ is shaded in $\Br(w_0w,w_0v)$.
\end{lemma}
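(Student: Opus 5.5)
The plan is to compare the difference functions of the two pairs directly, using the explicit formula for $w_0$. First I will express the rank function of $w_0u$ in terms of that of $u$, for any $u\in S_n$. Since $w_0u(i)=n+1-u(i)$, we get
\begin{align*}
r_{w_0u}(p,q)=\#\{i\leq p: u(i)\leq n+1-q\}=\#\{i\leq p, 1\le i\le n\}-r_u(p,n+2-q).
\end{align*}
The first term depends only on $p$, so it cancels in a difference. Applying this with $u=v$ and $u=w$ gives
\begin{align*}
d_{w_0w,w_0v}(p,q)=r_{w_0v}(p,q)-r_{w_0w}(p,q)=r_w(p,n+2-q)-r_v(p,n+2-q)=d_{v,w}(p,n+2-q).
\end{align*}
Thus $\Br(w_0w,w_0v)$ is obtained from $\Br(v,w)$ by reflecting the shading in the second coordinate, $q\mapsto n+2-q$. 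This is the quantitative form of \Cref{rem:bruhatflipping}. By \Cref{lem:visuallydetermineR}, both pairs have nonnegative $d$, since $v\preceq w$ if and only if $w_0w\preceq w_0v$.

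Next I will transport the hypothesis. Suppose $\mcA_{ij}(v)=(i,j)\times(v(i),v(j))$ is shaded, so $d_{v,w}(p,q)>0$ for $i<p<j$ and $v(i)<q<v(j)$. By the identity above, $d_{w_0w,w_0v}(p,q')>0$ whenever $i<p<j$ and $v(i)<n+2-q'<v(j)$. Writing $w_0v(i)=n+1-v(i)$ and $w_0v(j)=n+1-v(j)$, the condition on $q'$ becomes $w_0v(j)+1<q'<w_0v(i)+1$.

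The main obstacle is reconciling this interval with the claimed region $(w_0v(j),w_0v(i))$, which is shifted by one. I would handle it in one of two ways. The first is to use the convention underlying the pictures, where shading refers to unit cells, or to points taken with a half-integer offset, so that the reflection is $q\mapsto n+1-q$ on cell labels. Under that convention the interval matches exactly, and I would check that this agrees with how \Cref{lem:visuallydetermineR} reads $R(v,w)$.

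The second way, if the lattice-point convention is used literally, is to verify the single boundary value $q'=w_0v(j)+1$ by hand. Here I would use that $d$ changes by at most one when crossing a dot: moving $q'$ by one changes $r_{w_0v}(p,\cdot)$ and $r_{w_0w}(p,\cdot)$ by at most one each, and the relevant dot of $w_0v$ sits in row $j>p$. From this I would argue positivity persists at the boundary value. The remaining step is a routine check that the endpoint values correspond, and I would settle it by testing the example $v=[1,2,5,3,4]$, $w=[2,4,5,3,1]$ from \Cref{fig:examplepairwithshading}.
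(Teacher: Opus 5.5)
Your reflection identity $d_{w_0w,w_0v}(p,q)=d_{v,w}(p,n+2-q)$ is exactly the paper's first step, which the paper writes as $d_{w_0w,w_0v}(p,w_0(q)+1)=d_{v,w}(p,q)$. You also locate the difficulty correctly: an integer point $(p,q')$ of the target region with $q'=w_0v(j)+1$ pulls back to $(p,v(j))$, which lies on the boundary of $\mcA_{ij}(v)$ rather than inside it.

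The proposal stops there, and neither of your fixes closes the gap. The missing idea is to use what the hypothesis actually says. By \Cref{lem:visuallydetermineR}, $\mcA_{ij}(v)$ being shaded means $(i,j)\in R(v,w)$, i.e.\ $v\prec vs_{ij}\preceq w$. For $i<p<j$ and $v(i)<q\le v(j)$ we have $r_v(p,q)=r_{vs_{ij}}(p,q)-1$, and $d_{vs_{ij},w}\ge 0$. Together these give
\[
d_{v,w}(p,q)=d_{vs_{ij},w}(p,q)+1\ge 1\quad\text{on } (i,j)\times(v(i),v(j)],
\]
and this range includes the column $q=v(j)$. Your identity then finishes at once: for $q'\in(w_0v(j),w_0v(i))$ we get $n+2-q'\in(v(i)+1,v(j)+1)\subseteq(v(i),v(j)]$. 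This is exactly how the paper argues.

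Your second fix cannot work. Positivity at the integer points of $\mcA_{ij}(v)$, together with ``$d$ changes by at most one,'' does not force positivity on the column $q=v(j)$. Take $v=[1,2,3,4]$, $w=[2,3,4,1]$, $(i,j)=(1,4)$:
\begin{itemize}
\item $d_{v,w}=1$ at all four points of $\{2,3\}\times\{2,3\}$;
\item but $d_{v,w}(2,4)=0$, so $d_{w_0w,w_0v}(2,2)=0$ at an integer point of the target region.
\end{itemize}
Here $(1,4)\notin R(v,w)$, since $\ell(vs_{14})=5>3=\ell(w)$. So the hypothesis must enter as $vs_{ij}\preceq w$ (equivalently, positivity on the real points of the open rectangle), not merely as positivity on its lattice points.

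Your remark about the dot of $w_0v$ in row $j$ is also misplaced. Passing from $q'=w_0v(j)+2$ to $w_0v(j)+1$ crosses the dot of $w_0v$ in row $v^{-1}(v(j)-1)$, not row $j$.

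Your first fix, reading the shading over real points, would work if carried out. For non-integer $q$ one gets $d_{w_0w,w_0v}(p,q)=d_{v,w}(p,n+1-q)$, and the open rectangles then correspond exactly. But it succeeds only because real-point shading of $\mcA_{ij}(v)$ already encodes the column $q=v(j)$, which is the same information as $vs_{ij}\preceq w$. As written, this fix is deferred rather than done, and checking a single example is not a proof.
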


\begin{proof}
     By definition, for any $p,q\in \mathbb Z$, we have
    \begin{align*}
        r_{w_0v}(p,w_0(q)+1)&=\#\{i\leq p:w_0v(i)\geq w_0(q)+1\}\\
        &=\#\{i\leq p:n-v(i)+1\geq n-q+2\}\\
        &=\#\{i\leq p:v(i)\leq q-1\}\\
        &=p-r_v(p,q),
    \end{align*}
    so
    \begin{align*}
        d_{w_0w,w_0v}(p,w_0(q)+1)&=r_{w_0v}(p,w_0(q)+1)-r_{w_0w}(p,w_0(q)+1)\\
        &=(p-r_v(p,q))-(p-r_w(p,q))\\
        &=d_{v,w}(p,q).
    \end{align*} 

    Now let $p\in (i,j)$ and $q\in (v(i),v(j)]$. Again by definition, we have $r_v(p,q)=r_{vs_{ij}}(p,q)-1$. Because $\mcA_{ij}(v)$ is shaded in $\Br(v,w)$, we have $v\prec vs_{ij}\preceq w$, so
    \begin{align*}
        d_{v,w}(p,q)&=r_w(p,q)-r_v(p,q)\\
        &=r_w(p,q)-r_{vs_{ij}}(p,q)+1\\
        &=d_{vs_{ij},w}(p,q)+1.
    \end{align*}
    By \Cref{lem:visuallydetermineR}, we have $d_{vs_{ij},w}(p',q')\geq 0$ for all $p',q'\in \mathbb Z$, so $d_{v,w}(p,q)\geq 1$.

    Finally, take $(p'',q'')$ in $(i,j)\times (w_0v(j),w_0v(i))$. Then $(p'',w_0(q'')+1)$ is in $(i,j)\times (v(i),v(j)]$, so
    \begin{align*}
        d_{w_0w,w_0v}(p'',q'')&=d_{v,w}(p'',w_0(q'')+1)\geq 1. \qedhere
    \end{align*}
\end{proof}

\Cref{cor:singularcharacterization} and \Cref{lem:fliplemma} give a characterization of the existence of an open $B_v$-orbit in $N_{X(w),e_v}^\vee C(v)$ in terms of the graphs $\mcG_i(v,w)$, which is of independent interest.

\begin{theorem}\label{thm:allforestcriterion}
     Let $v\preceq w$. Then $M^{v,w}(x)$ has full rank for some $x\in N^\vee_{X(w),e_v}C(v)$ if and only if every $\overline{\mcG_i(v,w)}$ is a forest. In fact, this equivalence gives a stronger version of \Cref{thm:matrixcharacterization}, in the sense that the existence of an open $B_v$-orbit in $N_{X(w),e_v}^\vee C(v)$ is equivalent to $M^{v,w}(\vec 1)$ having full rank.
\end{theorem}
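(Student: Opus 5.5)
The equivalence splits into two implications, and the second also yields the statement about $\vec 1$.

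If every $\overline{\mcG_i(v,w)}$ is a forest, then \Cref{prop:sufficientconditionforopen} shows that $M^{v,w}(\vec 1)$ has full rank. In particular $M^{v,w}(x)$ has full rank for some $x$.

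For the converse, suppose some $\overline{\mcG_i(v,w)}$ contains a cycle. We show that $M^{v,w}(x)$ is rank-deficient for every $x$, which also proves the stronger claim about $\vec 1$. If the cycle lies in $\overline{\mcG_1(v,w)}$, this is exactly \Cref{cor:nocycle}.

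Otherwise $\overline{\mcG_1(v,w)}$ is a forest and some $\overline{\mcG_j(v,w)}$ with $j>1$ contains a cycle. The proof of \Cref{thm:openorbitclassification} then produces $\alpha_1<\alpha_2<r\le s<\beta_1<\beta_2$ with the configuration of \Cref{fig:flippedhighergjcycle} in $\Br(v,w)$. I would not attack the matrix directly here. Instead I would use \Cref{thm:singularimpliesnoopenorbit}: it suffices to show that $X(w_0v)$ is singular at $e_{w_0w}$.

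To show this singularity, apply \Cref{lem:fliplemma} to the two shaded rectangles $\mcA_{\alpha_1\beta_1}(v)$ and $\mcA_{\alpha_2\beta_2}(v)$, and to the rectangles between consecutive chain dots $\alpha_1\to r\to\cdots\to s\to\beta_2$ (these lie in $R(v,w)$ by \Cref{lem:uniquepath} and the shading). This gives shaded regions in $\Br(w_0w,w_0v)$.

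By \Cref{rem:bruhatflipping}, the filled dots of $\Br(v,w)$ become the unfilled dots of $\Br(w_0w,w_0v)$, reflected. The configuration then becomes one where the unfilled dots in rows $\beta_1,\alpha_1,\beta_2,\alpha_2$ (suitably relabeled) form the 45312 pattern of \Cref{fig:nonmaximal45312}. The chain dots in $(r,s)\times(v(r),v(s))$ become the unfilled dots proceeding down and to the left in the central rectangle.

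We then verify conditions (1)–(3) of \Cref{cor:singularcharacterization}:
\begin{enumerate}
\item the indicated regions are shaded, by \Cref{lem:fliplemma};
\item there are no stray unfilled dots, because \Cref{thm:openorbitclassification} guarantees no extra filled dots of $v$ in the shaded region;
\item the chain dots are correctly ordered, by \Cref{lem:uniquepath}.
\end{enumerate}
\Cref{cor:singularcharacterization} then gives that $X(w_0v)$ is singular at $e_{w_0w}$. By \Cref{thm:singularimpliesnoopenorbit}, there is no open orbit, so no $M^{v,w}(x)$ has full rank.

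Combining the two implications with \Cref{thm:matrixcharacterization}, an open $B_v$-orbit exists iff $M^{v,w}(x)$ has full rank for some $x$, iff every $\overline{\mcG_i(v,w)}$ is a forest, iff $M^{v,w}(\vec 1)$ has full rank.

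The main obstacle is the bookkeeping in the flipping step. One must check that the shading guaranteed by \Cref{lem:fliplemma}, which only covers rectangles coming from elements of $R(v,w)$, covers the entire region required in \Cref{fig:nonmaximal45312}. One must also check that the relabeling of $\alpha$'s and $\beta$'s matches the 45312 case rather than the 3412 case. If the rectangles from \Cref{lem:fliplemma} leave gaps, I would fill them using the chain of $E_1$-edges through the central dots: each consecutive pair lies in $R(v,w)$, and the union of their flipped rectangles together with the two big ones should tile the required shape.
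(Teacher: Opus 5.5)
Your proposal is correct and is essentially the paper's proof. In both, forests give full rank of $M^{v,w}(\vec 1)$ by \Cref{prop:sufficientconditionforopen}, and a cycle in $\overline{\mcG_1(v,w)}$ is handled by \Cref{cor:nocycle}. A cycle in a higher $\overline{\mcG_j(v,w)}$ is turned, via the configuration from the proof of \Cref{thm:openorbitclassification}, \Cref{rem:bruhatflipping}, \Cref{lem:fliplemma} and \Cref{cor:singularcharacterization}, into singularity of $X(w_0v)$ at $e_{w_0w}$, and hence into rank deficiency for all $x$ by \Cref{thm:singularimpliesnoopenorbit}. The flipping bookkeeping you flag is exactly what the paper compresses into that citation, taking the required shaded region to be the union of the flipped rectangles for $(\alpha_1,\beta_1)$ and $(\alpha_2,\beta_2)$. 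Your chain-rectangle fallback adds nothing, since those smaller rectangles are nested inside these two.
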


\begin{proof}
    By \Cref{cor:nocycle}, if $\overline{\mcG_1(v,w)}$ contains a cycle, then $M^{v,w}(x)$ does not have full rank for any $x\in N^\vee_{X(w),e_v}C(v)$. If $\overline{\mcG_1(v,w)}$ is a forest and $\overline{\mcG_i(v,w)}$ for $i> 1$ contains a cycle, then $\Br(v,w)$ contains the configuration in \Cref{fig:flippedhighergjcycle} by the argument in the proof of \Cref{thm:openorbitclassification}. By \Cref{rem:bruhatflipping}, \Cref{lem:fliplemma}, and \Cref{cor:singularcharacterization}, $X(w_0v)$ is singular at $e_{w_0w}$, so $M^{v,w}(x)$ does not have full rank for any $x\in N^\vee_{X(w),e_v}C(v)$ by \Cref{thm:singularimpliesnoopenorbit}. Conversely, if every $\overline{\mcG_i(v,w)}$ is a forest, then $M^{v,w}(\vec 1)$ has full rank by \Cref{prop:sufficientconditionforopen}.

    For the second equivalence, \Cref{prop:sufficientconditionforopen} implies that if $M^{v,w}(\vec 1)$ does not have full rank, then there is some cycle in some $\overline{\mcG_i(v,w)}$, and we are done.
\end{proof}

\begin{remark}
    \Cref{thm:allforestcriterion} yields an $O(n^2)$ algorithm for determining whether there is an open $B$-orbit in $N^\vee_{X(w)}C(v)$. The steps are as follows.
    \begin{enumerate}
        \item Compute $d_{v,w}(p,q)$ for every $(p,q)\in [n]^2$.
        \item Compute $R(v,w)$ using the computed $d_{v,w}$. We do this in $O(n^2)$ by first computing the function
        \begin{align*}
            f(p,q)=\#\{(p',q')\in [1,p]\times [1,q]:d_{v,w}(p',q')=0\}
        \end{align*}
        for each $(p,q)\in [n]^2$, and then checking whether
        \begin{align*}
            \#\{(p,q)\in \mcA_{ij}(v):d_{v,w}(p,q)=0\}=&f(i,v(i))+f(j-1,v(j)-1)\\
            &-f(i,v(j)-1)-f(j-1,v(i))
        \end{align*}
        is $0$ for each pair $i<j$.
        \item Compute $E_1(v,w)$ and determine whether there is a cycle in $\overline{\mcG_1(v,w)}$. We do this in $O(n^2)$ because finding a cycle in a graph $\mcG=(V,E)$ can be done in $O(|V|+|E|)$.
        \item If $\overline{\mcG_1(v,w)}$ is a forest, sort the elements of $R(v,w)$ into each $E_i(v,w)$. We do this in $O(n^2)$ using \Cref{lem:uniquepath}.
        \item Check whether each $\overline{G_i(v,w)}$ contains a cycle. Again, this is $O(n^2)$ because finding a cycle in each takes $O(|E_i(v,w)|+|E_{i-1}(v,w)|)$.
    \end{enumerate}
\end{remark}

\subsection{Proof of \Cref{thm:maintheorem}}\label{sec:proofoffinalimplication}

In this subsection, we state a series of corollaries and equivalent formulations of results of Deodhar, Dyer, Tsukerman--Williams, and Bousquet--M\'elou--Butler. We then prove the main theorem \Cref{thm:maintheorem}.

Recall that there is a family of polynomials $P_{v,w}$ which was introduced by Kazhdan--Lusztig in \cite{KazhdanLusztig1979,KazhdanLusztig1980}, indexed by the pairs $v\preceq w$ of elements of $S_n$. Further, recall that $P_{v,w}=1$ if and only if $X(w)$ is smooth at $e_v$ by a result of Deodhar \cite{Deodhar1985}.

We establish the following straightforward consequence of a result of Dyer in \cite{Dyer1997}. 

\begin{theorem}\label{thm:dyer}
    For $v\preceq w$, we have
    \begin{align*}
        [q]P_{v,w}\geq |E_1(w_0w,w_0v)|-|E_1(v,w)|.
    \end{align*}
\end{theorem}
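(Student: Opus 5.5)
The plan is to read both sides of the inequality as counts of atoms and coatoms of the Bruhat interval $[v,w]$, and then to quote Dyer's result relating these counts to the coefficient of $q$ in $P_{v,w}$. The form of Dyer's statement I intend to use is: for $v\preceq w$,
\begin{align*}
[q]P_{v,w}\geq c(v,w)-a(v,w),
\end{align*}
where $c(v,w)$ is the number of coatoms and $a(v,w)$ the number of atoms of $[v,w]$. I recall this as an equality up to a correction term of a fixed sign, which in particular gives $c(v,w)\geq a(v,w)$ for Bruhat intervals. I am not certain of the exact shape of the formula, so the first step will be to pin down the precise citation and check that it yields at least this inequality. This is the step I expect to be the main obstacle: if the formula has a different correction term, I will have to verify that the term is nonnegative in the direction needed.

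Next I show $a(v,w)=|E_1(v,w)|$. The atoms of $[v,w]$ are exactly the elements $vs_{ij}$ with $v\prec vs_{ij}\preceq w$ and $\ell(vs_{ij})=\ell(v)+1$. For $(i,j)$ with $v(i)<v(j)$, the standard length formula in $S_n$ says that $\ell(vs_{ij})=\ell(v)+1$ if and only if there is no $k\in(i,j)$ with $v(i)<v(k)<v(j)$. In the language of Bruhat pictures, this means no filled dot lies in $\mcA_{ij}(v)$. Combined with $(i,j)\in R(v,w)$, this is precisely the visual description of $E_1(v,w)$, namely the $\trianglelefteq$-minimal elements of $R(v,w)$. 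The correspondence $(i,j)\mapsto vs_{ij}$ is injective, so it is a bijection between $E_1(v,w)$ and the atoms.

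Then I show $c(v,w)=|E_1(w_0w,w_0v)|$. Left multiplication by $w_0$ is an order-reversing involution of $(S_n,\preceq)$, so $u\mapsto w_0u$ maps $[v,w]$ anti-isomorphically onto $[w_0w,w_0v]$. Under this map, coatoms of $[v,w]$ go to atoms of $[w_0w,w_0v]$. Applying the previous paragraph to the pair $w_0w\preceq w_0v$ identifies these atoms with $E_1(w_0w,w_0v)$. Substituting $a(v,w)=|E_1(v,w)|$ and $c(v,w)=|E_1(w_0w,w_0v)|$ into Dyer's inequality gives exactly the stated bound.
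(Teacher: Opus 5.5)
Your argument is the paper's: quote Dyer's theorem, which the paper states as $[q]P_{v,w}=c(v,w)-\Delta_{v,w}$ for an integer $\Delta_{v,w}\le a(v,w)$, and identify $a(v,w)=|E_1(v,w)|$ and $c(v,w)=|E_1(w_0w,w_0v)|$. The paper only asserts this identification; you justify it correctly, via the length formula for $vs_{ij}$ and the antiautomorphism $u\mapsto w_0u$.

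One correction to your aside. The correction term $a(v,w)-\Delta_{v,w}\ge 0$ has the sign your inequality needs, but it gives no comparison of $c$ and $a$, and $c(v,w)\ge a(v,w)$ is false in general. For example, $[2143,4321]$ has four atoms $4123,3142,2413,2341$ but only three coatoms $3421,4231,4312$. Indeed, case (2) of the proof of \Cref{thm:mainthmproper} is exactly a situation with $c(v,w)<a(v,w)$.
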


\begin{proof}
    In \cite{Dyer1997}, $|E_1(w_0w,w_0v)|$ is denoted by $c_{v,w}$, and $|E_1(v,w)|$ is denoted by $a_{v,w}$. The main theorem of \cite{Dyer1997} says that there is an integer $\Delta_{v,w}$ such that $|E_1(v,w)|-\Delta_{v,w}\geq 0$, and that
    \begin{align*}
        [q]P_{v,w}=|E_1(w_0w,w_0v)|-\Delta_{v,w}.
    \end{align*}
    Together, these yield the theorem.
\end{proof}

The following is a result of Bousquet--M\'elou--Butler, Theorem 1 of \cite{BousquetMelouButler2007}. More specifically, as is noted in the introduction of \cite{BousquetMelouButler2007}, the two theorems are equivalent by Proposition 2 of \cite{WooYongGorenstein}.

\begin{theorem}\label{thm:bousquetmeloubutler}
    Let $w\in S_n$. Then the following are equivalent.
    \begin{enumerate}
        \item $X(w)$ is factorial.
        \item The graph $\overline{\mcG_1(w_0w,w_0)}$ is a forest.
        \item $w$ avoids $4231$ and $3\underline{41}2$, where the underline means that the $4$ and $1$ must be adjacent in the one line notation for $w$. 
    \end{enumerate}
\end{theorem}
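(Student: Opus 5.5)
The plan is to prove $(1)\Leftrightarrow(3)$ and $(2)\Leftrightarrow(3)$ separately, leaning on the literature for the former and on a direct translation of $\mcG_1$ for the latter.

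\textbf{Step 1: $(1)\Leftrightarrow(3)$.} This is Proposition 2 of \cite{WooYongGorenstein}, which characterizes factorial Schubert varieties in type $A$ by avoidance of $4231$ and $3\underline{41}2$. I would cite it directly.

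\textbf{Step 2: unwind $\mcG_1(w_0w,w_0)$.} Set $v'=w_0w$. Since $w_0$ is the maximum of the Bruhat order, \Cref{lem:visuallydetermineR} gives
\begin{align*}
R(w_0w,w_0)=\{(i,j): i<j,\ w_0w(i)<w_0w(j)\}=\{(i,j): i<j,\ w(i)>w(j)\},
\end{align*}
which is exactly the set of inversions of $w$. A pair $(i,j)$ lies in $E_1(w_0w,w_0)$ precisely when $\mcA_{ij}(w_0w)$ contains no filled dot. Equivalently, there is no $k$ with $i<k<j$ and $w(i)>w(k)>w(j)$.

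So $\overline{\mcG_1(w_0w,w_0)}$ is the undirected graph on the positions involved in some inversion, with an edge for each \emph{minimal} inversion. Positions involved in no inversion would be isolated vertices, and adding or removing isolated vertices does not affect being a forest. This graph is, up to relabeling vertices by positions or by values, the graph whose acyclicity Bousquet-M\'elou--Butler study. Their Theorem 1 \cite{BousquetMelouButler2007} asserts that it is a forest if and only if $w$ avoids $4231$ and $3\underline{41}2$. The remaining work is a careful dictionary check of conventions (positions versus values, and $w$ versus $w^{-1}$ or $w_0w$). Both pattern sets are closed under inversion, since $4231^{-1}=4231$ and $3\underline{41}2$ inverts to a pattern of the same barred type, so the check should go through.

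\textbf{Step 3: direct argument for $(2)\Leftrightarrow(3)$.} If I want the equivalence without relying on the dictionary, I would argue both directions directly.

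\emph{$(2)\Rightarrow(3)$.} Suppose $w$ contains $4231$ at positions $a<b<c<d$. Shrink it to a minimal such occurrence, choosing $b$ and $c$ as close as possible so that the inversions $(a,b)$, $(a,c)$, $(b,d)$, $(c,d)$ become minimal, possibly after inserting intermediate vertices. The resulting walks $a\to b\to d$ and $a\to c\to d$ form a cycle. The pattern $3\underline{41}2$ similarly yields a $4$-cycle $a$--$b$, $b$--$c$ (adjacent, so automatically minimal), $c$--$d$, $a$--$d$. This direction is essentially the same mechanism as the proofs of \Cref{lem:4231maxsing} and \Cref{lem:3412maxsing}, read through \Cref{rem:bruhatflipping}.

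\emph{$(3)\Rightarrow(2)$.} Take a cycle in $\overline{\mcG_1(w_0w,w_0)}$. Choose its leftmost vertex $a$ and its two neighbours, and follow the two arcs of the cycle until they meet. Minimality of the inversions forces either a $4231$ occurrence, or a $3412$ occurrence whose middle two entries can be pushed to be adjacent.

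\textbf{Main obstacle.} The hardest step is this last extraction of a pattern from an arbitrary cycle, in particular producing the adjacency in $3\underline{41}2$. This is exactly the content of the Bousquet-M\'elou--Butler theorem, so my first plan is to cite it and only verify the convention translation in Step 2.
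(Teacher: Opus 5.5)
Your proposal matches the paper, which likewise just cites Theorem~1 of \cite{BousquetMelouButler2007} together with Proposition~2 of \cite{WooYongGorenstein}. Your unwinding of $\overline{\mcG_1(w_0w,w_0)}$ as the graph of minimal inversions of $w$ is correct; it is literally Bousquet-M\'elou--Butler's graph $G_\pi$ (edges $i<j$ with $\pi(i)<\pi(j)$ and no intermediate value) for $\pi=w_0w$. So the dictionary you need is the value complement, not closure under inversion: it sends their patterns $1324$ and $21\bar{3}54$ to $4231$ and $45\bar{3}12$, and avoiding $45\bar{3}12$ is equivalent to avoiding $3\underline{41}2$.

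There is one slip in your optional Step~3. In an occurrence of $3412$ at positions $a<b<c<d$, the pairs $(a,b)$ and $(c,d)$ are not inversions. The relevant inversions are $(a,c)$, $(a,d)$, $(b,c)$, $(b,d)$. After refining each to a chain of minimal inversions, these give a cycle through the adjacent edge $b$--$c$.
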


\begin{remark}
    Because smoothness of $X(w)$ is equivalent to $w$ avoiding $4231$ and $3412$, the first example of a permutation $w$ such that $X(w)$ is factorial but not smooth is $45312$ in $S_5$.
\end{remark}

The atom and coatom graphs, defined by Tsukerman--Williams in \cite{TsukermanWilliamsBruhatIntervalPolytopes}, are obtained from $\mcG_1(v,w)$ and $\mcG_1(w_0w,w_0v)$ by adding isolated vertices. Proposition 4.10 of \cite{TsukermanWilliamsBruhatIntervalPolytopes} says that the atom and coatom graphs have the same partition into connected components, and thus the same set of isolated vertices. Disregarding these vertices gives the following result.

\begin{theorem}\label{thm:tsukermanwilliams}
    For $v\preceq w$, the graphs $\overline{\mcG_1(v,w)}$ and $\overline{\mcG_1(w_0w,w_0v)}$ have the same number of connected components.
\end{theorem}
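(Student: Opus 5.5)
The plan is to deduce the statement from Proposition 4.10 of \cite{TsukermanWilliamsBruhatIntervalPolytopes}. That result says the atom graph and the coatom graph of the Bruhat interval $[v,w]$, both on the vertex set $[n]$, induce the same partition of $[n]$ into connected components. The work consists of two identifications and a count of isolated vertices.

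\textbf{Step 1: the atom graph.} The atom graph has an edge $\{i,j\}$ exactly when $v\prec vs_{ij}\preceq w$ is a cover $v\lessdot vs_{ij}$.
\begin{itemize}
\item For $(i,j)\in R(v,w)$, the element $vs_{ij}$ covers $v$ if and only if no filled dot lies in $\mcA_{ij}(v)$. This is the standard cover criterion in $S_n$ (cf.\ \cite{BjornerBrenti}).
\item Suppose some $\pt_v(k)$ lies in $\mcA_{ij}(v)$. Then $\mcA_{ik}(v)\subseteq\mcA_{ij}(v)$ is shaded, so $(i,k)\in R(v,w)$ by \Cref{lem:visuallydetermineR}, and $(i,k)\trianglelefteq(i,j)$. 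Hence a non-cover is never $\trianglelefteq$-minimal.
\item Conversely, a cover is minimal, since any $(i',j')\trianglelefteq(i,j)$ with $(i',j')\ne(i,j)$ forces a filled dot inside $\mcA_{ij}(v)$.
\end{itemize}
So $E_1(v,w)$ is precisely the edge set of the atom graph, and $\overline{\mcG_1(v,w)}$ is the atom graph with some vertices deleted.

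\textbf{Step 2: the deleted vertices are isolated.} I must check that the vertices omitted from $\mcG_1(v,w)$ are exactly the isolated vertices of the atom graph. Equivalently, every vertex of $E_0(v,w)$ meets an edge of $E_1(v,w)$.
\begin{itemize}
\item Let $(i,j)\in R(v,w)$. If $\mcA_{ij}(v)$ contains no filled dot, we are done.
\item Otherwise, among the filled dots $\pt_v(k)\in\mcA_{ij}(v)$ choose one with no other such dot strictly to its upper left, i.e.\ with index $<k$ and value $<v(k)$.
\item Then $\mcA_{ik}(v)$ is shaded and contains no filled dot, so $(i,k)\in E_1(v,w)$ and $i$ is incident to an edge.
\item The symmetric choice, a dot with no other dot to its lower right, gives an edge of $E_1(v,w)$ at $j$.
\end{itemize}

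\textbf{Step 3: the coatom graph.} Left multiplication by $w_0$ reverses the Bruhat order. Hence $w_0w\prec w_0ws_{ij}\preceq w_0v$ is a cover if and only if $v\preceq ws_{ij}\lessdot w$. By Step 1 applied to the pair $(w_0w,w_0v)$, the edge set $E_1(w_0w,w_0v)$ is exactly the coatom graph's edge set, indexed by the same positions $\{i,j\}$. By Step 2 for this pair, $\overline{\mcG_1(w_0w,w_0v)}$ is the coatom graph with its isolated vertices removed.

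\textbf{Step 4: counting.} Proposition 4.10 gives a common partition of $[n]$ into components. In particular the atom and coatom graphs have the same isolated vertices and the same number of components. Deleting the same number of singleton components from each leaves the same number of components, which proves the statement.

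The step needing the most care is the cover-versus-minimality identification in Step 1 together with the no-isolated-vertex claim in Step 2. The rest is bookkeeping once the conventions of \cite{TsukermanWilliamsBruhatIntervalPolytopes} are matched to ours, in particular positions versus values and the direction of $w_0$-multiplication.
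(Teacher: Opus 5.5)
Your proposal is correct and takes the same route as the paper. Both arguments identify $\overline{\mcG_1(v,w)}$ and $\overline{\mcG_1(w_0w,w_0v)}$ with the Tsukerman--Williams atom and coatom graphs with their common isolated vertices removed, then invoke Proposition 4.10. Your Steps 1--2 prove what the paper only asserts: that $E_1$ is exactly the set of covers, and that every vertex of $E_0$ meets an edge, which is what justifies removing the same number of singleton components on both sides.
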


We now prove our main theorem, which is a rephrasing of \Cref{thm:maintheorem}.

\begin{theorem}\label{thm:mainthmproper}
    Let $v\preceq w$, and let $X(w)$ be factorial. Then there is an open $B_v$-orbit in $N_{X(w),e_v}^\vee C(v)$ if and only if $X(w_0v)$ is smooth at $e_{w_0w}$. 
\end{theorem}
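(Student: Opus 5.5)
The direction ``open orbit $\Rightarrow$ smooth'' is \Cref{thm:singularimpliesnoopenorbit} in contrapositive form, and it needs no hypothesis on $w$. For the converse, suppose there is no open $B_v$-orbit. By \Cref{thm:allforestcriterion}, some $\overline{\mcG_i(v,w)}$ then contains a cycle. If $\overline{\mcG_1(v,w)}$ is a forest, the cycle lives in some $\overline{\mcG_i(v,w)}$ with $i>1$. The proof of \Cref{thm:allforestcriterion} already shows that $\Br(v,w)$ then contains the configuration of \Cref{fig:flippedhighergjcycle}. Combining \Cref{rem:bruhatflipping}, \Cref{lem:fliplemma} and \Cref{cor:singularcharacterization}, it follows that $X(w_0v)$ is singular at $e_{w_0w}$. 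So the only remaining case is when $\overline{\mcG_1(v,w)}$ contains a cycle, and this is where factoriality of $X(w)$ must be used.

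In that case I would argue via Kazhdan--Lusztig polynomials. By Deodhar, it suffices to show $P_{w_0w,w_0v}\neq 1$, and for that it suffices to show $[q]P_{w_0w,w_0v}>0$. Apply \Cref{thm:dyer} to the pair $w_0w\preceq w_0v$, using $w_0w_0=e$:
\begin{align*}
[q]P_{w_0w,w_0v}\geq |E_1(v,w)|-|E_1(w_0w,w_0v)|.
\end{align*}
The aim is to prove $|E_1(v,w)|>|E_1(w_0w,w_0v)|$. Both graphs have their vertex sets equal to their non-isolated vertices, and by Tsukerman--Williams (\Cref{thm:tsukermanwilliams}, together with the fact that the atom and coatom graphs have the same partition into components) they have the same vertex set. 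They also have the same number $k$ of connected components. Since $\overline{\mcG_1(v,w)}$ contains a cycle,
\begin{align*}
|E_1(v,w)|>|V|-k,
\end{align*}
and the desired strict inequality follows once we know $|E_1(w_0w,w_0v)|=|V|-k$, that is, once we know $\overline{\mcG_1(w_0w,w_0v)}$ is a forest.

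This forest claim is the main obstacle. \Cref{thm:bousquetmeloubutler} gives only that $\overline{\mcG_1(w_0w,w_0)}$ is a forest, because $X(w)$ is factorial. We need the same for the pair $(w_0w,w_0v)$, where $w_0v\preceq w_0$. I would first try to show that $E_1(w_0w,w_0v)\subseteq E_1(w_0w,w_0)$. The containment $R(w_0w,w_0v)\subseteq R(w_0w,w_0)$ is immediate, since $w_0ws_{ij}\preceq w_0v$ implies $w_0ws_{ij}\preceq w_0$. Minimality with respect to $\trianglelefteq$ is defined by the absence of filled dots of $w_0w$ in $\mcA_{ij}(w_0w)$, and this depends only on the lower permutation $w_0w$. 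So every minimal element of $R(w_0w,w_0v)$ is also minimal in $R(w_0w,w_0)$. Hence $\mcG_1(w_0w,w_0v)$ is a subgraph of $\mcG_1(w_0w,w_0)$, and a subgraph of a forest is a forest. If the visual criterion for minimality turns out to be subtler than this, I would fall back on arguing directly with the rectangle-containment definition of $\trianglelefteq$, which is again intrinsic to $w_0w$.

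With the forest claim in hand, $[q]P_{w_0w,w_0v}\geq 1$, so $X(w_0v)$ is singular at $e_{w_0w}$, which completes the converse.
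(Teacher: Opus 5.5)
Your proposal is correct and follows the paper's proof essentially step for step. The forward direction is \Cref{thm:singularimpliesnoopenorbit}; the higher-graph case goes through the configuration of \Cref{fig:flippedhighergjcycle} and \Cref{cor:singularcharacterization}; and the $\mcG_1$-cycle case uses Bousquet-M\'elou--Butler, the containment $E_1(w_0w,w_0v)\subseteq E_1(w_0w,w_0)$, Tsukerman--Williams, Dyer's inequality applied to the pair $(w_0w,w_0v)$, and Deodhar. Your explicit check that the two atom graphs have the same vertex set, and not merely the same number of components, is exactly what is needed to equate the incidence-matrix ranks, a point the paper leaves implicit.
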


\begin{proof}
    One implication is handled in \Cref{thm:singularimpliesnoopenorbit}. It is then left only to show that if there is no open $B_v$-orbit in $N_{X(w),e_v}^\vee C(v)$, then $X(w_0v)$ is singular at $e_{w_0w}$. By \Cref{thm:openorbitclassification}, if there is no open $B_v$-orbit in $N_{X(w),e_v}^\vee C(v)$, then $\overline{\mcG_1(v,w)}$ contains a cycle or $\overline{\mcG_1(v,w)}$ is a forest and $\Br(v,w)$ contains the configuration in \Cref{fig:flippedhighergjcycle}.

    \begin{enumerate}
        \item We begin with the case in which $\overline{\mcG_1(v,w)}$ is a forest and we have an occurrence of the configuration in \Cref{fig:flippedhighergjcycle} in $\Br(v,w)$. By \Cref{rem:bruhatflipping} and \Cref{lem:fliplemma}, applying $w_0$ to the pair $(v,w)$ preserves the shading, the lack of dots in the shaded region, and the relative positions of the dots, so we have the configuration in \Cref{fig:nonmaximal45312} in $\Br(w_0w,w_0v)$. Thus, by \Cref{cor:singularcharacterization}, $X(w_0v)$ is singular at $e_{w_0w}$.
        \item We now handle the case in which there is a cycle in $\overline{\mcG_1(v,w)}$. Because $X(w)$ is factorial, the graph $\overline{\mcG_1(w_0w,w_0)}$ is a forest by \Cref{thm:bousquetmeloubutler}. The graph $\overline{\mcG_1(w_0w,w_0v)}$ is a subgraph of $\overline{\mcG_1(w_0w,w_0)}$, so it is a forest as well. Therefore,
        \begin{align*}
            |E_1(w_0w,w_0v)|=\rank(A_{\mcG_1(w_0w,w_0v)})
        \end{align*}
        by \Cref{cor:allcomponentstree}. Similarly, because $\overline{\mcG_1(v,w)}$ contains a cycle, we have
        \begin{align*}
            |E_1(v,w)|>\rank(A_{\mcG_1(v,w)})
        \end{align*}
        by \Cref{cor:nocycle}. Combining \Cref{thm:tsukermanwilliams} and \Cref{thm:incidencematrixrank}, we have
        \begin{align*}
            \rank(A_{\mcG_1(v,w)})=\rank(A_{\mcG_1(w_0w,w_0v)}),
        \end{align*}
        so
        \begin{align*}
            |E_1(w_0w,w_0v)|&=\rank(A_{\mcG_1(v,w)})\\
            &=\rank(A_{\mcG_1(w_0w,w_0v)})\\
            &<|E_1(v,w)|.
        \end{align*}
        Applying \Cref{thm:dyer} to the pair $(w_0w,w_0v)$, this gives $[q]P_{w_0w,w_0v}>0$. For all $v\preceq w$, we have $P_{v,w}=1$ if and only if $X(w)$ is smooth at $e_v$ by the main theorem of \cite{Deodhar1985}, so $X(w_0v)$ is not smooth at $e_{w_0w}$. \qedhere
    \end{enumerate}
\end{proof}

\begin{remark}
    In fact, only the second case in the second implication in the proof of \Cref{thm:mainthmproper} requires the factoriality hypothesis.
\end{remark}

\section*{Acknowledgements}

This work was completed as part of the MIT PRIMES-USA program. The author thanks the anonymous PRIMES reviewer and Thomas R\" ud for helpful comments. The author also thanks Roman Bezrukavnikov for suggesting the main problem of the paper. Finally, the author would like to thank his PRIMES mentor Hamilton Wan for many helpful mathematical discussions, as well as substantial feedback which greatly affected the presentation and exposition of the paper.

This work did not receive any specific grant from funding agencies in the public, commercial, or not-for-profit sectors.

\section*{Declaration of Generative AI Use}

During the writing of this paper, the author used ChatGPT 5.6 Sol (OpenAI) for proofreading and copyediting. Additionally, ChatGPT suggested the use of \Cref{thm:dyer}, upon being asked whether there were existing results of the form ``$|E_1(w_0w,w_0v)|<|E_1(v,w)|$ implies $X(w_0v)$ is singular at $e_{w_0}$". Apart from this suggestion, all mathematical content and writing is the author's own.

%% If you have bib database file and want bibtex to generate the
%% bibitems, please use
%%
\bibliographystyle{plain} 
\bibliography{cas-refs}

@article{billey2001maximalsingularlocischubert,
  author  = {Billey, Sara C. and Warrington, Gregory S.},
  title   = {Maximal singular loci of {Schubert} varieties in ${SL}(n)/B$},
  journal = {Transactions of the American Mathematical Society},
  volume  = {355},
  number  = {10},
  pages   = {3915--3945},
  year    = {2003},
  doi     = {10.1090/S0002-9947-03-03019-8},
  eprint  = {math/0102168},
  archivePrefix = {arXiv},
  primaryClass  = {math.AG}
}

@article{Lapid_2018,
  author  = {Lapid, Erez and M{\'i}nguez, Alberto},
  title   = {Geometric conditions for {$\square$}-irreducibility of certain representations of the general linear group over a non-archimedean local field},
  journal = {Advances in Mathematics},
  volume  = {339},
  pages   = {113--190},
  year    = {2018},
  doi     = {10.1016/j.aim.2018.09.027}
}

@article{LakshmibaiSeshadri1984,
  author  = {Lakshmibai, V. and Seshadri, C. S.},
  title   = {Singular locus of a {Schubert} variety},
  journal = {Bulletin of the American Mathematical Society},
  volume  = {11},
  number  = {2},
  pages   = {363--366},
  year    = {1984},
  doi     = {10.1090/S0273-0979-1984-15309-6}
}

@book{godsil01,
  author    = {Godsil, Chris and Royle, Gordon F.},
  title     = {Algebraic Graph Theory},
  series    = {Graduate Texts in Mathematics},
  volume    = {207},
  publisher = {Springer},
  address   = {New York},
  year      = {2001},
  doi       = {10.1007/978-1-4613-0163-9},
  isbn      = {978-1-4613-0163-9}
}

@article{BeilinsonBernstein1981,
  author  = {Beilinson, Alexander and Bernstein, Joseph},
  title   = {Localisation de $\mathfrak{g}$-modules},
  journal = {C. R. Acad. Sci. Paris S{\'e}r. I Math.},
  volume  = {292},
  pages   = {15--18},
  year    = {1981}
}

@article{BrylinskiKashiwara1981,
  author  = {Brylinski, Jean-Luc and Kashiwara, Masaki},
  title   = {{Kazhdan--Lusztig} conjecture and holonomic systems},
  journal = {Inventiones Mathematicae},
  volume  = {64},
  number  = {3},
  pages   = {387--410},
  year    = {1981},
  doi     = {10.1007/BF01389272}
}

@article{KazhdanLusztig1979,
  author  = {Kazhdan, David and Lusztig, George},
  title   = {Representations of {Coxeter} groups and {Hecke} algebras},
  journal = {Inventiones Mathematicae},
  volume  = {53},
  number  = {2},
  pages   = {165--184},
  year    = {1979},
  doi     = {10.1007/BF01390031}
}

@incollection{KazhdanLusztig1980,
  author    = {Kazhdan, David and Lusztig, George},
  title     = {{Schubert} varieties and {Poincar{\'e}} duality},
  booktitle = {Geometry of the Laplace Operator},
  series    = {Proceedings of Symposia in Pure Mathematics},
  volume    = {36},
  pages     = {185--203},
  publisher = {American Mathematical Society},
  address   = {Providence, RI},
  year      = {1980},
  doi       = {10.1090/pspum/036/573434}
}

@article{LakshmibaiSandhya1990,
  author  = {Lakshmibai, V. and Sandhya, B.},
  title   = {Criterion for smoothness of {Schubert} varieties in ${SL}(n)/B$},
  journal = {Proceedings of the Indian Academy of Sciences. Mathematical Sciences},
  volume  = {100},
  pages   = {45--52},
  year    = {1990},
  doi     = {10.1007/BF02881113}
}

@article{BousquetMelouButler2007,
  author  = {Bousquet-M{\'e}lou, Mireille and Butler, Steve},
  title   = {Forest-like permutations},
  journal = {Annals of Combinatorics},
  volume  = {11},
  number  = {3--4},
  pages   = {335--354},
  year    = {2007},
  doi     = {10.1007/s00026-007-0322-1}
}

@article{Manivel2001,
  author  = {Manivel, Laurent},
  title   = {Le lieu singulier des vari{\'e}t{\'e}s de {Schubert}},
  journal = {International Mathematics Research Notices},
  number  = {16},
  pages   = {849--871},
  year    = {2001},
  doi     = {10.1155/S1073792801000423}
}

@article{KasselLascouxReutenauer2003,
  author  = {Kassel, Christian and Lascoux, Alain and Reutenauer, Christophe},
  title   = {The singular locus of a {Schubert} variety},
  journal = {Journal of Algebra},
  volume  = {269},
  number  = {1},
  pages   = {74--108},
  year    = {2003},
  doi     = {10.1016/S0021-8693(03)00014-0}
}

@article{Cortez2003,
  author  = {Cortez, Aur{\'e}lie},
  title   = {Singularit{\'e}s g{\'e}n{\'e}riques et quasi-r{\'e}solutions des vari{\'e}t{\'e}s de {Schubert} pour le groupe lin{\'e}aire},
  journal = {Advances in Mathematics},
  volume  = {178},
  number  = {2},
  pages   = {396--445},
  year    = {2003},
  doi     = {10.1016/S0001-8708(02)00081-6}
}

@article{LapidMinguez2025,
  author  = {Lapid, Erez and M{\'i}nguez, Alberto},
  title   = {A binary operation on irreducible components of {Lusztig}'s nilpotent varieties {II}: applications and conjectures for representations of ${GL}_n$ over a non-archimedean local field},
  journal = {Pure and Applied Mathematics Quarterly},
  volume  = {21},
  number  = {2},
  pages   = {813--863},
  year    = {2025},
  doi     = {10.4310/PAMQ.241205005734}
}

@book{Harary1969GraphTheory,
  author    = {Harary, Frank},
  title     = {Graph Theory},
  publisher = {Addison-Wesley},
  address   = {Reading, MA},
  year      = {1969}
}

@article{Deodhar1985,
  author  = {Deodhar, Vinay V.},
  title   = {Local {Poincar{\'e}} duality and non-singularity of {Schubert} varieties},
  journal = {Communications in Algebra},
  volume  = {13},
  number  = {6},
  pages   = {1379--1388},
  year    = {1985},
  doi     = {10.1080/00927878508823227}
}

@incollection{Dyer1997,
  author    = {Dyer, Matthew J.},
  title     = {On coefficients of {$q$} in {Kazhdan--Lusztig} polynomials},
  booktitle = {Algebraic Groups and Lie Groups},
  editor    = {Lehrer, Gus I.},
  series    = {Australian Mathematical Society Lecture Series},
  volume    = {9},
  pages     = {189--194},
  publisher = {Cambridge University Press},
  address   = {Cambridge},
  year      = {1997}
}

@article{TsukermanWilliamsBruhatIntervalPolytopes,
  author  = {Tsukerman, Emmanuel and Williams, Lauren},
  title   = {{Bruhat} interval polytopes},
  journal = {Advances in Mathematics},
  volume  = {285},
  pages   = {766--810},
  year    = {2015},
  doi     = {10.1016/j.aim.2015.07.030},
  eprint  = {1406.5202},
  archivePrefix = {arXiv},
  primaryClass  = {math.CO}
}

@article{WooYongGorenstein,
  author  = {Woo, Alexander and Yong, Alexander},
  title   = {When is a {Schubert} variety {Gorenstein}?},
  journal = {Advances in Mathematics},
  volume  = {207},
  number  = {1},
  pages   = {205--220},
  year    = {2006},
  doi     = {10.1016/j.aim.2005.11.010}
}

@book{BjornerBrenti,
  author    = {Bj{\"o}rner, Anders and Brenti, Francesco},
  title     = {Combinatorics of {Coxeter} Groups},
  series    = {Graduate Texts in Mathematics},
  volume    = {231},
  publisher = {Springer},
  address   = {New York},
  year      = {2005},
  doi       = {10.1007/3-540-27596-7}
}

@article{GeissLeclercSchroer2011,
  author  = {Christof Geiss and Bernard Leclerc and Jan Schr{\"o}er},
  title   = {Kac--Moody Groups and Cluster Algebras},
  journal = {Advances in Mathematics},
  volume  = {228},
  number  = {1},
  pages   = {329--433},
  year    = {2011},
  doi     = {10.1016/j.aim.2011.05.011}
}

@article{GeissLeclercSchroer2005,
  author  = {Christof Gei{\ss} and Bernard Leclerc and Jan Schr{\"o}er},
  title   = {Semicanonical Bases and Preprojective Algebras},
  journal = {Annales Scientifiques de l'{\'E}cole Normale Sup{\'e}rieure},
  volume  = {38},
  number  = {2},
  pages   = {193--253},
  year    = {2005},
  doi     = {10.1016/j.ansens.2004.12.001}
}

@article{GeissLeclercSchroer2006,
  author  = {Christof Gei{\ss} and Bernard Leclerc and Jan Schr{\"o}er},
  title   = {Rigid Modules over Preprojective Algebras},
  journal = {Inventiones Mathematicae},
  volume  = {165},
  number  = {3},
  pages   = {589--632},
  year    = {2006},
  doi     = {10.1007/s00222-006-0507-y}
}

@article{AizenbudLapid2025,
  author  = {Avraham Aizenbud and Erez Lapid},
  title   = {A Binary Operation on Irreducible Components of Lusztig's Nilpotent Varieties {I}: Definition and Properties},
  journal = {Pure and Applied Mathematics Quarterly},
  volume  = {21},
  number  = {1},
  pages   = {5--41},
  year    = {2025},
  doi     = {10.4310/PAMQ.241203024730}
}

@misc{Lapid2026,
  author       = {Erez Lapid},
  title        = {A greedy open-orbit criterion for solvable algebraic group actions, with applications to Lusztig's nilpotent varieties},
  year         = {2026},
  eprint       = {2608.14413},
  archivePrefix = {arXiv},
  primaryClass = {math.RT},
  doi          = {10.48550/arXiv.2608.14413}
}

@article{LapidMinguez2020,
  author  = {Erez Lapid and Alberto M{\'i}nguez},
  title   = {Conjectures and Results about Parabolic Induction of Representations of {$GL_n(F)$}},
  journal = {Inventiones Mathematicae},
  volume  = {222},
  number  = {3},
  pages   = {695--747},
  year    = {2020},
  doi     = {10.1007/s00222-020-00982-7}
}

@article{GeissSchroer2005,
  author  = {Christof Gei{\ss} and Jan Schr{\"o}er},
  title   = {Extension-Orthogonal Components of Preprojective Varieties},
  journal = {Transactions of the American Mathematical Society},
  volume  = {357},
  number  = {5},
  pages   = {1953--1962},
  year    = {2005},
  doi     = {10.1090/S0002-9947-04-03555-X}
}

@article{JensenSuYu2009,
  author  = {Bernt Tore Jensen and Xiuping Su and Rupert Wei Tze Yu},
  title   = {Rigid Representations of a Double Quiver of Type {$A$}, and Richardson Elements in Seaweed Lie Algebras},
  journal = {Bulletin of the London Mathematical Society},
  volume  = {41},
  number  = {1},
  pages   = {1--15},
  year    = {2009},
  doi     = {10.1112/blms/bdn087}
}

%% else use the following coding to input the bibitems directly in the
%% TeX file.

%% Refer following link for more details about bibliography and citations.
%% https://en.wikibooks.org/wiki/LaTeX/Bibliography_Management

\end{document}